\newcounter{thecounter}
\numberwithin{thecounter}{section}
\newtheorem{lemma}[thecounter]{Lemma}
\newtheorem{proposition}[thecounter]{Proposition}
\newtheorem{theorem}[thecounter]{Theorem}
\newtheorem*{theorem*}{Theorem}
\newtheorem{thm}[thecounter]{Theorem}
\newtheorem{corollary}[thecounter]{Corollary}
\newtheorem{defn}[thecounter]{Definition}
\DeclareMathOperator{\End}{End}
\DeclareMathOperator{\ad}{ad}
\DeclareMathOperator{\re}{{re}}
\DeclareMathOperator{\im}{{im}}
\renewcommand{\ker}{\mathrm{Ker}}
\DeclareMathOperator{\ind}{{Ind}}
\DeclareMathOperator{\rank}{{rank}}
\DeclareMathOperator{\supp}{{supp}}
\newcommand{\comm}[1]{}
\renewcommand{\a}{\alpha}
\renewcommand{\b}{\beta}
\newcommand{\y}{\gamma}
\newcommand{\g}{\mathfrak g}
\newcommand{\h}{\mathfrak h}
\newcommand{\n}{\mathfrak n}
\newcommand{\en}{\begin{enumerate}}
\newcommand{\te}{\end{enumerate}}
\newcommand{\Z}{{\mathbb Z}}
\newcommand{\R}{{\mathbb R}}
\newcommand{\C}{{\mathbb C}}
\newcommand{\N}{{\mathbb N}}
\newcommand{\Q}{{\mathbb Q}}
\newcommand{\la}{\langle}
\newcommand{\ra}{\rangle}
\newcommand{\tg}{\tilde{\mathfrak{g}}}
\newcommand{\tn}{\tilde{\mathfrak{n}}}
\newcommand{\tf}{\tilde{f}}
\newcommand{\Gp}{\mathfrak{g}^+}
\newcommand{\eps}{\varepsilon}
\author{Lisa Carbone}
\email{lisa.carbone@rutgers.edu}
\author{Terence Coelho}
\email{tjc210@scarletmail.rutgers.edu}
\author{Scott H.\ Murray}
\email{scotthmurray@gmail.com}
\author{Forrest Thurman}
\email{fbt7@math.rutgers.edu}
\author{Songhao Zhu}
\email{zhu.math@gatech.edu}
\title{Growth of root multiplicities along imaginary \\root strings in Kac--Moody algebras}
\begin{document}
    
\begin{abstract}
Let $\mathfrak{g}$ be a symmetrizable Kac--Moody algebra. Given a root $\alpha$ and a real root $\beta$ of $\mathfrak{g}$, it is known that the $\beta$-string through $\alpha$, denoted $R_\alpha(\beta)$, is finite. Given an imaginary root $\beta$, we show that $R_\alpha(\beta)=\{\a\}$ or $R_\alpha(\beta)$ is infinite. If $(\beta,\beta)<0$, we also show that the multiplicity of the root ${\alpha+n\beta}$ grows at least exponentially as $n\to\infty$. If $(\b,\b)=(\alpha, \beta) = 0$, we show that $R_\alpha(\beta)$ is bi-infinite and the multiplicities of $\a+n\beta$ are bounded. 
If $(\b,\b)=0$ and $(\alpha, \beta) \neq 0$, we show that $R_\alpha(\beta)$ is semi-infinite and the multiplicity of $\a+n\b$ or $\a-n\b$ grows faster than every polynomial as $n\to\infty$. 
We also prove that  $\dim \g_{\a+\b} \geq \dim \g_\a + \dim \g_\b -1$ whenever  $\a \neq \b$ with $(\a, \b)<0$.
\end{abstract}

\maketitle


\section{Introduction}\label{Intro}
Let $\g = \g (A)$ be the Kac--Moody algebra of a symmetrizable generalized Cartan matrix~$A = (A_{ij})$ with Cartan subalgebra $\h$. 
For $\a\in\h^*$, define
\[\g_\a = \{x\in \g \mid [h, x]=\a(h)x \text{ for all } h\in \h\}.\]
If $\a\ne0$ and $\g_\a \ne0$, we call $\a$ a \emph{root} and $\g_\a$ the \emph{root space}.
The set of all roots $\Delta$ is called the \emph{root system} of~$\g$. Note that $0$ is not considered a root, but $\g_0=\h$.
We set $\Bar{\Delta} = \Delta \cup \{0\}$. 
The \emph{root space decomposition} is 
\[
\g = \h \oplus \bigoplus_{\a \in \Delta} \g_\a.
\]
When $A$ is positive definite, $\g$ is a finite dimensional semisimple Lie algebra. In such cases, the length of a root string is at most 4 and this maximum  is achieved  for $\g$ of Cartan type~$\text{G}_2$ \cite{Hu}. 
When $A$ is not positive definite, $\g$ is infinite dimensional. 

Since $A$ is symmetrizable, $\h^*$ can be equipped with a symmetric bilinear form as in \cite[Chapter~16]{Ca}, which induces a norm. 
Recall that a root is  {real} when its norm is positive; and {imaginary} otherwise.
Let $\Delta_{\re}$ and $\Delta_{\im}$ be the sets of real and imaginary roots respectively.
 For $\a\in \bar{\Delta}$ and $\b\in \Delta$, let $S_\a(\b)\subseteq \Z$ be the maximal set of consecutive integers including $0$ such that $\a+i\b\in \bar{\Delta}$ for every $i\in S_\a(\b)$.
Then the \textit{$\b$-root string through $\a$} \cite{Ca,K} is
\[
R_\a(\b):=\{\a+i\b\mid {i\in S_\a(\b)}\}\subseteq\\\bar{\Delta}.
\]
The behavior of root strings with $\b$ an imaginary root has not been widely studied.
Throughout the paper, we assume that $\N$ includes 0.
Since $S_\a(\b)$ is a set of consecutive integers, exactly one of the following holds:
\begin{itemize}
    \item $S_\a(\b)$ is finite, in which case $R_\a(\b)$ is finite;
    \item $\N\subseteq S_\a(\b)$, $-\N\not\subseteq S_\a(\b)$, in which case we call $R_\a(\b)$ \emph{semi-infinite in the direction of $\b$};
    \item $\N\not\subseteq S_\a(\b)$, $-\N\subseteq S_\a(\b)$, in which case we call $R_\a(\b)$  \emph{semi-infinite in the direction of~$-\b$};
    \item $S_\a(\b)=\Z$, in which case we call $R_\a(\b)$  \emph{bi-infinite}.
\end{itemize}
We call $R_\a(\b)$ \emph{trivial} if $|R_\a(\b)|=1$ (i.e., $R_\a(\b)=\{\a\}$).

Recall that $\dim \mathfrak{g}_\a$ is called the \emph{multiplicity of $\a$}.
By the \textit{growth along $R_\alpha(\beta)$ in the direction of $\b$},
we mean the growth of the multiplicity of ${\a+n\b}$ as $n\to\infty$. For detailed definitions regarding growth of functions, we refer to \cite{KL} and Section~\ref{subsec:RS&G} below. 

Every root string in the direction of a real root is finite \cite{K}. Moreover, the root multiplicity $\dim \mathfrak g_{\alpha}$ is 1 if $\alpha$ is a real root \cite{K}.
This agrees with the finite dimensional case, where every root is real.
Morita \cite{Mo2} showed that $A_{ij}=-1$ and $A_{ji}<-1$ for some $i,j$ if and only if 
$|\{\a+k\b:k\in\Z \}\cap\Delta_{\re}|\in\{3,4\}$
for some $(\a, \b) \in \Delta \times \Delta_{\re}$. However, many other questions about root multiplicities and the structure of root strings have remained unanswered.

In this work, we characterize the properties of root strings in the direction of an imaginary root and show that these root strings are infinite or trivial. Our motivation is to study  the behavior of the dimensions of imaginary root spaces along infinite root strings. It is known \cite{KacGrowth} that non-affine Kac--Moody algebras have infinite
Gelfand--Kirillov dimension and that the dimensions of the root spaces
of rank 2 hyperbolic Kac--Moody algebras grow exponentially \cite{Meu}. 
Suppose $\a\in\bar\Delta$ and $\b\in\Delta_{\im}$.
Since $\b$ is imaginary, $(\b, \b)\le 0$. If $(\b, \b) = 0$ (respectively $(\b, \b)<0$), $\b$ is said to be \textit{isotropic} (respectively \textit{non-isotropic}).
We assume $R_\a(\b)$ is nontrivial, that is, $R_\a(\b)$ contains a root other than $\a$.
Table~\ref{tab:results} summarizes our main results. For $(\b, \b) < 0$, our methods do not give an easy criterion to distinguish between  bi-infinite and semi-infinite root strings.
\begin{table}[ht]
\renewcommand{\arraystretch}{1.2}
\begin{tabular}{
>{\centering\arraybackslash}p{65pt}
>{\centering\arraybackslash}p{55pt}
>{\centering\arraybackslash}p{75pt}
>{\centering\arraybackslash}p{140pt}
>{\centering\arraybackslash}p{70pt}
}
\hline
\multicolumn{2}{c}{\textbf{Conditions}} & \textbf{$R_\a(\b)$} & \textbf{Growth} & \textbf{Reference}\\
\hline
\multicolumn{2}{c}{$(\b, \b) < 0$} & infinite & at least exponential &  Section~\ref{sec:nonIso} \\
\hline
\multirow{2}{*}{$(\b, \b)=0$} & $(\a, \b) = 0$ & bi-infinite & bounded & Subsection~\ref{subsec:0prod} \\
\cline{2-5}
 & $(\a, \b) \neq 0$ & semi-infinite & greater than polynomial &  Subsection~\ref{subsec:negProd}\\
   \hline
\end{tabular}
\vspace{2mm}

\caption{Results on nontrivial root strings through $\a$ in the direction of $\b$}\label{tab:results}
\end{table}


We now discuss our results in more detail.
First consider the case when $\b$ is non-isotropic. By applying \cite[Corollary~C]{TM}, we show that, if $R_\a(\b)$ is nontrivial, then at least one of $\a\pm\N\b$ is contained in $R_\a(\b)$ (Proposition~\ref{prop:semiinfRS}).
Then, by a result of Kac \cite[Corollary~9.12]{K}, we show that there exists a free Lie subalgebra in $\bigoplus_{k\in \N} \g_{k\b}$. Free Lie algebras  possess exponential growth, which leads to the following.
\begin{thm}\label{thm:nonIsoExp} Let~$\Delta$ be the set of roots of a symmetrizable Kac--Moody algebra $\g$.
Let $\b\in\Delta_{\im}$ be non-isotropic. If $|R_\a(\b)|>1$, then at least one of $\a\pm\N\b$ is contained in $R_\a(\b)$. Thus $R_\a(\b)$ is infinite or bi-infinite.
Moreover, the growth along $R_\a(\b)$ has an exponential lower bound.
\end{thm}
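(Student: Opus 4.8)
The plan is to establish the structural statement and the growth statement separately, reducing the latter to the single ``diagonal'' string $\{n\b\}_{n\ge 1}$. For the structural part I would simply invoke Proposition~\ref{prop:semiinfRS}: since $|R_\a(\b)|>1$ and $\b$ is non-isotropic, that result gives $\a+\N\b\subseteq R_\a(\b)$ or $\a-\N\b\subseteq R_\a(\b)$. Replacing $\b$ by $-\b$ if necessary (note that $-\b$ is again a non-isotropic imaginary root and that $R_\a(\b)=R_\a(-\b)$ as a set, since $S_\a(-\b)=-S_\a(\b)$), I may assume throughout that $\a+\N\b\subseteq R_\a(\b)$. This string then contains the infinitely many distinct elements $\a+n\b$, so it is semi-infinite in the direction of $\b$ or bi-infinite; in either case it is infinite.

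For the growth, the first step is to show that $\dim\g_{n\b}$ itself grows at least exponentially in $n$. Here I would apply \cite[Corollary~9.12]{K} to obtain a non-abelian free Lie subalgebra $\mathfrak f\subseteq\bigoplus_{k\ge 1}\g_{k\b}$. Grading $\mathfrak f$ by $\b$-degree via $\mathfrak f_n:=\mathfrak f\cap\g_{n\b}$, all free generators lie in positive $\b$-degree, and a free Lie algebra on at least two generators has exponentially growing graded components (by Witt's formula, or equivalently because the Hilbert series $\bigl(1-\sum_i t^{d_i}\bigr)^{-1}$ of its enveloping algebra has radius of convergence strictly less than $1$; see \cite{KL}). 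Hence $\dim\mathfrak f_n$, and a fortiori $\dim\g_{n\b}$, grows at least exponentially.

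It remains to transfer this to the root spaces $\g_{\a+n\b}$. Because $(\b,\b)<0$, the pairing $(\a+m\b,\b)=(\a,\b)+m(\b,\b)$ is eventually negative, and $(\a+m\b,\a+m\b)\to-\infty$, so using $\a+\N\b\subseteq R_\a(\b)$ I may fix $M$ large enough that $\gamma:=\a+M\b$ is a nonzero root with $(\gamma,\b)<0$. Then $(\gamma,n\b)=n(\gamma,\b)<0$ for every $n\ge1$, and $\gamma\ne n\b$ for all but at most one $n$. For those $n$ the inequality $\dim\g_{\mu+\nu}\ge\dim\g_\mu+\dim\g_\nu-1$ (valid whenever $(\mu,\nu)<0$ and $\mu\ne\nu$, which we also establish) applies with $\mu=\gamma$, $\nu=n\b$, yielding $\dim\g_{\a+(M+n)\b}=\dim\g_{\gamma+n\b}\ge\dim\g_\gamma+\dim\g_{n\b}-1\ge\dim\g_{n\b}$. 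The exponential lower bound from the previous step therefore propagates along the string, and after reindexing $m=M+n$ we obtain the claimed exponential lower bound along $R_\a(\b)$ (a cofinite subset suffices for the growth estimate).

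The main obstacle is the second step together with the transfer input in the third. Extracting a genuinely \emph{exponential} rather than merely super-polynomial bound requires care when the free generators furnished by \cite[Corollary~9.12]{K} sit in several different $\b$-degrees, since one must then verify that the relevant Hilbert series has radius of convergence below $1$. The transfer, in turn, rests on the non-vanishing of brackets of root vectors $x\in\g_\mu$, $y\in\g_\nu$ with $(\mu,\nu)<0$ and $\mu\ne\nu$ --- equivalently, the injectivity of $y\mapsto[x,y]$ --- which is the technical heart shared with the dimension inequality. The remaining bookkeeping (that $\gamma$ is a genuine nonzero root, that $\gamma\ne n\b$, and that the direction fixed in the first step is consistent) is routine.
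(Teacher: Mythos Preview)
Your proposal is correct and follows essentially the same approach as the paper: Proposition~\ref{prop:semiinfRS} for the structural claim, \cite[Corollary~9.12]{K} plus free Lie algebra growth for the diagonal string $\{n\b\}$, and Marquis' non-vanishing result for the transfer to $R_\a(\b)$ (the paper uses Corollary~\ref{cor:MarCorC} rather than Theorem~\ref{thm:dimGrowth}, but either suffices). The only notable difference is that where you invoke a Hilbert-series argument to handle free generators in mixed $\b$-degrees, the paper instead pins down a single degree $s\le5$ with $\dim\g_{s\b}\ge2$ (Proposition~\ref{prop:s=5}) and then applies Witt's formula to the free subalgebra $\mathfrak{L}(\g_{s\b})$ directly, sidestepping precisely the obstacle you flag.
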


When $\b$ is isotropic, \cite[Corollary~C]{TM} and \cite[Corollary~9.12]{K} no longer apply. However, we observe that when $(\a, \b) = 0$, the behavior of $R_\a(\b)$ is  essentially the same as in affine Kac--Moody algebras. 
This is established in Proposition~\ref{prop:equalDim}. 
\begin{thm} \label{thm:isoZero} 
Let~$\Delta$ be the set of roots of a symmetrizable Kac--Moody algebra $\g$. 
Let $\b \in \Delta_{\im}$ be isotropic, and $\a\in \Bar{\Delta}$ be such that $|R_\a(\b)|>1$. Suppose $(\a, \b) = 0$. Then $R_\a(\b)$ is always bi-infinite, and
\begin{enumerate}[label=(\roman*)]
\item If $\a$ is real, $R_\a(\b)$ consists only of real roots which have multiplicity $1$.
\item\label{thm:isoZero:im} If $\a$ is imaginary, $R_\a(\b)$ consists of imaginary roots only,
and the multiplicities take on at most $3$ values, at most $2$ of which occur periodically.
\end{enumerate}
\end{thm}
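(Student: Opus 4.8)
The plan is to reduce everything to a single norm computation and then treat the real and imaginary cases by entirely different mechanisms. Indeed, since $(\a,\b)=0$ and $(\b,\b)=0$,
\[
(\a+n\b,\,\a+n\b)=(\a,\a)+2n(\a,\b)+n^{2}(\b,\b)=(\a,\a)\qquad(n\in\Z),
\]
so every element of $R_\a(\b)$ has the same norm as $\a$. Hence if $\a$ is real all roots of the string are real, and if $\a$ is imaginary all are imaginary; this is the first assertion in each of (i) and (ii). Because real roots have multiplicity $1$ \cite{K}, the multiplicity claim in (i) follows as well, and what remains is bi-infiniteness in both cases together with the multiplicity count in (ii).

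\emph{Case (i).} Assume $\a$ real; replacing $\b$ by $-\b$ if necessary I may assume $\a+\b\in\bar\Delta$, and by the norm computation $\a+\b$ is a real root (it is nonzero, since $\a+\b=0$ would force $\a=-\b$ imaginary). As $(\a,\b)=(\a+\b,\b)=0$, both reflections $s_\a$ and $s_{\a+\b}$ fix $\b$. I would then compute $t:=s_{\a+\b}s_\a$, which fixes $\b$ and, using $(\a,\a)=(\a+\b,\a+\b)$, satisfies $t(\a)=\a+2\b$. Since the Weyl group $W$ permutes $\Delta$ and preserves multiplicities, the orbit $\{t^{k}\a=\a+2k\b:k\in\Z\}$ consists of real roots, and applying $t^{k}$ to the root $\a+\b$ gives the odd translates $\a+(2k+1)\b$. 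Thus $R_\a(\b)$ is bi-infinite with all multiplicities equal to $1$, proving (i).

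\emph{Case (ii): the level-zero structure.} For imaginary $\a$ the reflection argument is unavailable, so I would instead exploit the phenomenon recorded in Proposition~\ref{prop:equalDim}. Let $t_\b\in\h$ satisfy $(t_\b,h)=\b(h)$ for all $h\in\h$. For $e\in\g_\b$ and $f\in\g_{-\b}$ one has $[e,f]=(e,f)\,t_\b$, and $\b(t_\b)=(\b,\b)=0$ makes $t_\b$ central in the graded Heisenberg-type subalgebra $\bigoplus_{n\ne0}\g_{n\b}\oplus\C t_\b$. On $M=\bigoplus_n\g_{\a+n\b}$ the element $t_\b$ acts by the scalar $(\a+n\b)(t_\b)=(\a,\b)+n(\b,\b)=0$, so $\mathrm{ad}(e)$ and $\mathrm{ad}(f)$ commute on $M$ for all such $e,f$. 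This is exactly the level-zero situation of an imaginary-root string in an affine Kac--Moody algebra, and it is precisely what the hypothesis $(\a,\b)=0$ buys us.

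\emph{Case (ii): multiplicities, and the main obstacle.} Using the isotropy of $\b$, the strategy is to locate an affine subalgebra $\g_S\subseteq\g$ in which $\b$ is parallel to the basic imaginary root $\delta_S$, and to observe that the real-root generators of $\g_S$ act locally nilpotently on $\g$, so that $\g$ is an integrable $\g_S$-module. Since $(\a,\b)=0$ places the whole string in a level-zero weight line for $\g_S$, the relevant $\g_S$-submodule is integrable of level $0$; such modules have bounded weight multiplicities along $\delta_S$, and these multiplicities are eventually periodic, taking the value(s) of the affine imaginary-root multiplicities of $\g_S$ --- a single value in the untwisted types and two values (by residue modulo the twist) in the twisted types \cite[Ch.~8]{K}. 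This produces at most two periodically occurring values; the only possible further value is $\dim\g_0=\dim\h$, which arises exactly at an index with $\a+n\b=0$ (that is, when $\a\in\Z\b$), giving at most three values in all. Bi-infiniteness follows since $n\delta_S$ is a root of $\g_S$ for every $n\ne0$. The main obstacle is Proposition~\ref{prop:equalDim} itself: one must show that the multiplicities computed in the full algebra $\g$ stabilize to, and are governed by, the affine pattern --- that no extra dimension leaks in along the string and that the periodicity is exact. This reduction is where essentially all of the difficulty in (ii) resides.
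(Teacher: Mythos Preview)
Your treatment of case~(i) is correct and in fact more direct than the paper's: the paper reduces both cases simultaneously to the affine subalgebra on $\supp(\b)$ and then reads off the real case from the tabulated affine root system, whereas your translation element $t=s_{\a+\b}s_\a\in W$ (which fixes $\b$ and sends $\a\mapsto\a+2\b$) gives bi-infiniteness immediately. The norm computation cleanly separates the real and imaginary cases, as you say.

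For case~(ii), however, your module-theoretic outline has a genuine gap, and the paper resolves the obstacle you identify by a different and much simpler mechanism. The key fact you are missing is Proposition~\ref{prop:affineSupp}: after using the Weyl group to move $\b$ into $K$, the hypothesis $(\a,\b)=0$ forces $\supp(\a)\subseteq\supp(\b)=:J$. Hence $\a$ is not merely a weight of some level-zero $\g(A_J)$-module sitting inside $\g$ --- it is an honest root of the affine subalgebra $\g(A_J)$ itself, and Proposition~\ref{prop:equalDim} then says $\dim\g(A)_{\a+n\b}=\dim\g(A_J)_{\a+n\b}$ for all $n$. Since the imaginary roots of an affine algebra are exactly the nonzero multiples of $\delta$, both $\a$ and $\b$ are integer multiples of $\delta$; bi-infiniteness is then automatic, and the at-most-two periodic multiplicity values (with the possible third value $\dim\h$ when $0\in R_\a(\b)$) come straight from the known multiplicities of $k\delta$ in the untwisted and twisted affine types. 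Your proposed route through level-zero integrable $\g_S$-modules would be substantially harder --- such modules do not in general have two-valued periodic weight multiplicities along $\Z\delta$ --- and indeed your claimed bi-infiniteness ``since $n\delta_S$ is a root of $\g_S$'' already presupposes that $\a$ is a multiple of $\delta_S$, which is precisely the support-containment conclusion of Proposition~\ref{prop:affineSupp} that you have not established.
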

In particular, part \ref{thm:isoZero:im} is derived from a case-by-case study of untwisted and twisted affine Kac--Moody algebras. 

If $(\a, \b)\neq 0$, then reducing to the affine case is no longer applicable. However,  we can construct a subalgebra in $\bigoplus_{k\in \Z} \g_{k\b}$ that is isomorphic to an infinite dimensional Heisenberg algebra, which we denote  $H(\b)$. 
By applying the representation theory of $H(\b)$, we show that there exists a subspace in $\bigoplus_{k\in \N} \g_{\a+k\b}$ which is isomorphic to an irreducible induced module of $H(\b)$. 

\begin{thm}\label{thm:isoNeg} 
Let~$\Delta$ be the set of roots of a symmetrizable Kac--Moody algebra $\g$. 
Let $\b \in \Delta_{\im}$ be isotropic, and $\a\in \Bar{\Delta}$ be such that $|R_\a(\b)|>1$. Suppose $(\a, \b) \neq 0$. 
Then $R_\a(\b)$ is semi-infinite in the direction of $\pm\b$ and the multiplicity of $\a\pm n\b$ grows faster than every polynomial. 
\end{thm}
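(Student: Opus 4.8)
The plan rests on two facts: that the restriction of the invariant form to the line $\a+\Z\b$ is an affine function of $n$ with nonzero slope, which forces $R_\a(\b)$ to terminate on one side; and that the canonical central element $t_\b=\nu^{-1}(\b)$ acts on the entire string by the single nonzero scalar $(\a,\b)$, which lets a Heisenberg/Fock mechanism force the growth on the other side. To establish semi-infiniteness directly, note that since $(\b,\b)=0$,
\[(\a+n\b,\a+n\b)=(\a,\a)+2n(\a,\b),\]
which is affine in $n$ with slope $2(\a,\b)\neq0$. Every root of $\g$ has norm at most $D:=\max_i(\alpha_i,\alpha_i)$, since real roots are $W$-conjugate to simple roots and imaginary roots have nonpositive norm. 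Hence on the side where the norm increases past $D$ the element $\a+n\b$ lies outside $\bar\Delta$, so $R_\a(\b)$ is bounded above when $(\a,\b)>0$ and bounded below when $(\a,\b)<0$. In particular $R_\a(\b)$ cannot be bi-infinite, and once infiniteness on the remaining side is established (below) it is semi-infinite in the direction of $\mp\b$ according to the sign of $(\a,\b)$.

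Next I would set up the module. The space $M=\bigoplus_{n}\g_{\a+n\b}$ is graded by $n$ and is a module for $\bigoplus_{k}\g_{k\b}$ under $\ad$. Because $(\b,\b)=0$, the element $t_\b$ is central in $\bigoplus_{k}\g_{k\b}$, and it acts on the graded piece $\g_{\a+n\b}$ by $(\a+n\b,\b)=(\a,\b)=:\lambda$, the same nonzero scalar for every $n$; this is exactly where the hypothesis $(\a,\b)\neq0$ enters. Inside $\bigoplus_{k}\g_{k\b}\oplus\C t_\b$ I would produce the infinite-dimensional Heisenberg subalgebra $H(\b)$ with center $\C t_\b$ announced in the introduction, so that $M$ becomes an $H(\b)$-module of central charge $\lambda\neq0$.

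The growth then follows from Fock-space representation theory. Using the boundedness from the first step, choose a nonzero vector $v$ in the extremal nonzero graded piece $\g_{\a+n_0\b}$; every mode of $H(\b)$ pushing toward the empty side annihilates $v$, so $v$ is a vacuum. The cyclic submodule generated by $v$ is a nonzero quotient of the induced module of $H(\b)$, and since the central charge $\lambda$ is nonzero this induced module is irreducible (the Heisenberg analogue of Stone--von Neumann); hence the submodule is isomorphic to it. Its graded dimension is $\prod_{k\ge1}(1-q^k)^{-\dim\g_{k\b}}$, whose coefficients dominate the partition function $p(m)$, so that $\dim\g_{\a+(n_0\mp m)\b}\ge p(m)$. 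This simultaneously shows that the string is infinite in the unbounded direction and that its multiplicities grow faster than every polynomial.

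The main obstacle is the construction of $H(\b)$ itself: away from the affine case the cross-brackets $[\g_{k\b},\g_{l\b}]$ with $k+l\neq0$ need not vanish, so one cannot simply take the whole tower $\bigoplus_{k}\g_{k\b}$ as a Heisenberg algebra. I expect to handle this by using that an isotropic $\b$ is $W$-conjugate to a multiple of the null root of an affine subsystem, which reduces the relevant commutators to the homogeneous Heisenberg algebra of that affine subalgebra, combined with the nondegeneracy of the invariant form on each pairing $\g_{k\b}\times\g_{-k\b}$ to normalize the modes and to select compatible mode vectors when $\dim\g_{k\b}>1$.
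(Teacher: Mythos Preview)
Your proposal is correct and follows the same Heisenberg/Fock strategy as the paper: build $H(\b)$ from one mode $x_k\in\g_{k\b}$, $y_k\in\g_{-k\b}$ per degree, use the affine subsystem on $\supp(\b)$ to get $[\g_{k\b},\g_{\ell\b}]=0$ for $k+\ell\neq0$, take a vacuum vector at the endpoint of the string, and invoke irreducibility of the induced module at nonzero central charge (the paper cites \cite{FLM}) to obtain the partition-function lower bound and the Hardy--Ramanujan growth.

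The one genuine difference is your boundedness step. The paper first conjugates $\b$ into $K$ and then argues via supports: since $(\a,\b)\neq0$ one has $\supp(\a)\not\subseteq\supp(\b)$, so in $\a-k\b$ some simple-root coefficient stays positive while the coefficients on $\supp(\b)$ become negative, and $\a-k\b\notin\Delta$. Your norm argument, using $(\a+n\b,\a+n\b)=(\a,\a)+2n(\a,\b)$ together with the uniform bound $(\gamma,\gamma)\le\max_i(\a_i,\a_i)$ on $\bar\Delta$, is a clean alternative that works directly for either sign of $(\a,\b)$ without first moving $\b$ into the fundamental chamber. One small inconsistency: if you select a single mode per degree (as your last paragraph indicates), the graded character of the induced module is $\prod_{k\ge1}(1-q^k)^{-1}$ rather than $\prod_{k\ge1}(1-q^k)^{-\dim\g_{k\b}}$; since you only use that the coefficients dominate $p(m)$, this does not affect the conclusion.
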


We remark that the potentially subexponential growth in Theorem~\ref{thm:isoNeg} arises from the growth of the partition function $p(n)$ which  naturally appears as a lower bound in the proof. The subexponential growth of $p(n)$  is a consequence of the classic Hardy--Ramanujan approximation formula \cite{HRpartFun} (see also Subsection~\ref{subsec:negProd}).




The following general result about root multiplicities may already be well-known. However, we have been unable to find a reference. Thus, we present a proof in Section~\ref{sec:linBound}.


\begin{thm}\label{thm:dimGrowth} Let~$\Delta$ be the set of roots of a symmetrizable Kac--Moody algebra $\g$. 
Let $\a$, $\b$ be distinct roots of $\g$ with  $(\a,\b)<0$. Then 
\[
\dim \g_{\a+\b}\ge \dim \g_\a+ \dim \g_\b -1.
\]
\end{thm}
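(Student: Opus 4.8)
The key observation is that for distinct roots $\a,\b$ with $(\a,\b)<0$, one can produce many linearly independent elements of $\g_{\a+\b}$ by bracketing elements of $\g_\a$ and $\g_\b$. The plan is to consider the bilinear map
\[
\phi\colon \g_\a\times\g_\b\to\g_{\a+\b},\qquad \phi(x,y)=[x,y],
\]
and to understand its kernel. Since $[x,y]\in\g_{\a+\b}$ whenever $x\in\g_\a$ and $y\in\g_\b$, the image lands in the right space; the task is to show the image has dimension at least $\dim\g_\a+\dim\g_\b-1$. I would fix a nonzero $y_0\in\g_\b$ and a nonzero $x_0\in\g_\a$ and study the two linear maps $\ad(y_0)\colon\g_\a\to\g_{\a+\b}$ and $\ad(x_0)\colon\g_\b\to\g_{\a+\b}$, whose images together span $\phi(\g_\a\times\g_\b)$.

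The central tool will be the invariant bilinear form on $\g$ and the identity $[x,[x,y]]$-type relations controlled by $(\a,\b)$. First I would argue that $\ad(x_0)$ restricted to $\g_\b$ is injective: if $[x_0,y]=0$ for some nonzero $y\in\g_\b$, then $x_0$ and $y$ generate a subalgebra on which one can derive a contradiction from $(\a,\b)<0$, using that the form pairs $\g_\a$ with $\g_{-\a}$ nondegenerately and the standard computation $[\,[\,x_0,y\,],\,\cdot\,]$ via $(\a,\b)$ being negative (so the relevant $\mathfrak{sl}_2$ or Heisenberg-type brackets cannot all vanish). The same argument shows $\ad(y_0)\colon\g_\a\to\g_{\a+\b}$ is injective. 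Hence both images have dimensions $\dim\g_\b$ and $\dim\g_\a$ respectively inside $\g_{\a+\b}$.

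The remaining and genuinely delicate step is to control the intersection of these two images, $\ad(x_0)(\g_\b)\cap\ad(y_0)(\g_\a)$, and show it is at most one-dimensional. The intersection always contains the line spanned by $[x_0,y_0]$, which accounts for the $-1$. To bound it from above, I would pass to the subalgebra generated by $\g_\a$, $\g_\b$, $\g_{-\a}$, $\g_{-\b}$ and the toral part, and analyze how $\ad$ acts; the condition $(\a,\b)<0$ forces $\a+\b$ to behave like a root whose string structure limits the overlap. The cleanest route is probably to consider the map $\g_\a\oplus\g_\b\to\g_{\a+\b}$ sending $(x,y)\mapsto[x,y_0]+[x_0,y]$ and show its kernel is exactly the line $\{(tx_0,-ty_0):t\in\C\}$, i.e. one-dimensional, by testing against $\g_{-\a-\b}$ using nondegeneracy of the form and the nonvanishing of $([x_0,y_0],z)$ for suitable $z\in\g_{-\a-\b}$.

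The main obstacle I anticipate is the injectivity/intersection analysis when $\dim\g_\a$ or $\dim\g_\b$ is large (the imaginary case), where $\ad(x_0)$ and $\ad(y_0)$ need not be maps between one-dimensional spaces and the $\mathfrak{sl}_2$-representation theory that handles the real-root case no longer directly applies. Handling this will require using the full strength of the invariant form's nondegeneracy on $\g_{\a+\b}\times\g_{-\a-\b}$ together with the Jacobi identity to convert the hypothesis $(\a,\b)<0$ into a concrete nonvanishing statement about the pairing of brackets, rather than relying on representation-theoretic structure. I expect this pairing computation, showing $([x_0,y_0],z)\neq 0$ can be arranged and that it detects the one-dimensionality of the overlap, to be the crux of the argument.
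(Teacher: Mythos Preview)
Your plan has a genuine gap at the ``remaining and genuinely delicate step.'' The assertion that $\ad(x_0)(\g_\b)\cap\ad(y_0)(\g_\a)$ is at most one-dimensional is not true for general bilinear maps with no zero divisors, and you give no Kac--Moody--specific reason it should hold here. For a concrete obstruction, take $U=V=\C[t]_{<m}$ (polynomials of degree $<m$), $W=\C[t]_{<2m-1}$, and $\phi(p,q)=pq$. This bilinear map has no zero divisors, yet with $x_0=y_0=1$ one gets $\phi(1,V)=\phi(U,1)=\C[t]_{<m}$, so the two images coincide and their intersection is $m$-dimensional, not one-dimensional. Nothing in your outline (the invariant form, the Jacobi identity, testing against $\g_{-\a-\b}$) rules out this phenomenon for the bracket map; the pairing computation you propose would at best show $[x_0,y_0]\ne 0$, which you already know.

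The paper avoids this issue entirely by working with the full linear map $\g_\a\otimes\g_\b\to\g_{\a+\b}$ induced by the bracket, rather than restricting to the slice $x_0\otimes\g_\b + \g_\a\otimes y_0$. Theorem~\ref{thm:MarThmA} (Marquis) says $[x,y]\ne 0$ for all nonzero $x\in\g_\a$, $y\in\g_\b$, so the kernel contains no nonzero simple tensor. A short algebro-geometric argument (membership in the rank-$\le 1$ locus of $M_{m,n}$ is cut out by $(m-1)(n-1)$ homogeneous quadrics, so B\'ezout forces a nontrivial intersection with any linear subspace of dimension $>(m-1)(n-1)$) then bounds $\dim\ker\le(m-1)(n-1)$, and rank--nullity gives $\dim\mathrm{Im}\ge mn-(m-1)(n-1)=m+n-1$. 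Your injectivity observation for $\ad(x_0)$ is precisely Marquis' theorem and is the correct input; what is missing is that you must use it for \emph{all} pairs $(x,y)$ simultaneously, not just along two coordinate slices.
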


\comm{
Let us present the structure of the paper and discuss the proofs briefly.

In Section~\ref{sec:pre}, we recall the basics of Kac--Moody algebras and present some known results in the literature.
In particular, we will define root strings (Definition~\ref{defn:RS}) and growths (Definition~\ref{defn:RG}) precisely.

In Section~\ref{sec:nonIso}, we will show that in the non-isotropic case, $R_\a(\b)$ is always at least infinite, and the growth is exponential. 
By result of Marquis' (see Theorem~\ref{thm:MarThmA}), if $\b_1, \b_2$ are distinct roots and $(\b_1, \b_2)<0$, then $[x_1, x_2] \neq 0$ for non-zero $x_1\in \g_{\b_1}$ and $x_2 \in \g_{\b_2}$. The assertion on the cardinality follows from inductively applying this theorem.
In Subsection~\ref{subsec:FLA}, we recall the necessary background on free Lie algebras which are essential in proving Theorem~\ref{thm:nonIsoExp} in Subsection~\ref{subsec:pfThmgen_exp}.
The assertion on the growth is proved using the fact that there is a free Lie subalgebra contained in the sum of root spaces along $R_\a(\b)$. 
This is a consequence of \cite[Corollary~9.12]{K}.

Now in the isotropic case, \cite[Corollary~9.12]{K} does not apply. However, the submatrix $A_{\supp(\b)}$ of $A$ is of affine type. By a consideration of $\supp(\b)$ (see  Proposition~\ref{prop:affineSupp}), we further split into two cases in Section~\ref{sec:Iso}.

If $(\a, \b) = 0$ (Subsection~\ref{subsec:0prod}), it turns out that the support of $\a$ is contained in that of $\b$. It suffices to study $R_\a(\b)$ using $\g(A_{\supp(\b)})$, which is of affine type. 
In such cases, both $\a$ and $\b$ are multiples of the basic imaginary root $\delta$, i.e. the root dual to the derivation operator $d$ in the loop algebra realization of $\g(A_{\supp(\b)})$.
Then by a careful study of cases in affine Kac--Moody algebras (c.f. \cite{Ca}), we present a collection of standard results (Proposition~\ref{prop:affResults}) regarding cardinality and multiplicities.
Specifically, the multiplicities of $k\delta$ ($k\neq 0$) are constant in the untwisted case, and can be at most two values in the twisted case.
This is what we mean by bounded growth in the table above.

If $(\a, \b) \neq 0$ (Subsection~\ref{subsec:negProd}), it suffices to assume that $(\a, \b) < 0$. 
The technique in the last section does not fit here.
But we can construct a Heisenberg algebra $H(\b)$ using root vectors in $\g_{k\b}$ for $k\neq 0$.
The root spaces along $R_\a(\b)$ can then be realized as an induced module of $H(\b)$, which is graded by $\N$ by construction.
This implies that $R_\a(\b)$ is infinite in one direction. 
We apply the Poincar\'e--Birkhoff--Witt theorem to this induced module to show Theorem~\ref{thm:isoNeg}.

In addition to the asymptotic behavior of root multiplicities, we will also show the following ``local" result in Section~\ref{sec:linBound}, see Corollary~\ref{cor:strictGrowth}. Suppose $\a \in \Delta^+$ and $\b\in \Delta_{\im}$, then
\begin{center} 
    \textit{
    $\dim \g_{\a+n\b}$ eventually strictly increases if and only if $(\a, \b) < 0$.
    }
\end{center}
}

\section{Preliminaries} \label{sec:pre}
In this section, we review some basic definitions and properties of Kac--Moody algebras. Most results here are contained in \cite{K} and~\cite{Ca}. 
Throughout this paper, we take $\N := \{0, 1, 2, \dots \}$.

Let $I$ be a finite index set. An integral square  matrix $A = (A_{ij})_{i, j \in I}$ is said to be a \textit{generalized Cartan matrix} if it satisfies:
\begin{align*}
A_{ij}&\in\Z &&\text{ for }i,j\in I;\\
A_{ii} &=2 &&\text{ for }i\in I;
\\
A_{ij} &\le 0\quad\text{ and }\quad
A_{ij} =0\iff A_{ji}=0 &&\text{ for }i, j\in I, i\neq j.
\end{align*}
A generalized Cartan matrix $A$ is called \textit{symmetrizable} if there exists a diagonal matrix $D$ over the  positive rationals such that the matrix $DA$ is symmetric. 
The matrix $D$ is called the \textit{symmetrizer} of $A$.
For $J\subseteq I$, we use $A_J$ to denote the submatrix $(A_{ij})_{i, j\in J}$ of $A$ with indices in $J$.


\subsection{Kac--Moody algebras}
For a generalized Cartan matrix $A$, let $\h=\h(A)$ be a $\C$-vector space  of dimension $\dim(\h)=2|I|-\rank(A)$. We choose linearly independent sets 
\begin{align*}
\Pi=\Pi_A=\{\a_{A,i}\}_{i\in I}=\{\a_{i}\}_{i\in I}&\subseteq \h^{\ast}\\
\Pi^\vee=\Pi^\vee_A=\{\a_{A,i}^\vee\}_{i\in I}=
\{\a_i^\vee\}_{i\in I}&\subseteq \h
\end{align*}
such that $\la \a_j, \a_i^{\vee} \ra=A_{ij}$ for all $i,j\in I$.
Here $\la \cdot, \cdot \ra$ denotes the natural pairing $\h^\ast \otimes \h \rightarrow \C$.
We call elements of $\Pi$ the \textit{simple roots} and of $\Pi^\vee$ the \textit{simple coroots}. Following \cite{Ca}, we let $\tg(A)$ be the Lie algebra generated by $\mathfrak{h}$, $\{\tilde{e}_{A,i}\}_{i\in I}$, and $\{\tilde{f}_{A,i}\}_{i\in I}$, subject to the relations
\begin{alignat*}{2}
h-ah'-bh'' &= 0, &&   \text{for $h, h', h''\in \h, a, b\in \C$  with $h = ah'+bh''$ in $\mathfrak{h}$;} \\
[h, h'] &= 0, &&  \text{for }  h, h'\in \h;\\
[\tilde{e}_{A,i},\tilde{f}_{A,j}] &= \delta_{ij}\alpha_i^{\vee},  && \text{for }  i,j\in I;\\
[h,\tilde{e}_i] &= \la \a_i, h \ra \tilde{e}_i,\  && \text{for }  h\in \h, i\in I;\\
[h,\tilde{f}_i] &= -\la \a_i, h \ra \tilde{f}_i,\qquad  && \text{for }  h\in \h, i\in I.
\end{alignat*}
Note that $\mathfrak{h}$ can be considered an abelian subalgebra of $\tg(A)$.
We write $\tn^+(A)$ (respectively $\tn^-(A)$) for the free subalgebra of $\tg(A)$ generated by the $\tilde{e}_i$ (respectively $\tf_i$). Then $\tg(A) = \tilde{\n}^-(A) \oplus \h\oplus \tilde{\n}^+(A)$ by \cite[Theorem~1.2]{K}.
The \textit{Kac--Moody algebra} is then defined as $ \g=\g(A) = \tg(A)/S(A)$ where $S(A)$ is the unique maximal ideal intersecting $\h$ trivially. 
We set
\begin{alignat}{3}
    S^+(A) &:=S(A)\cap \tn^+(A), \quad &S^-(A) &:=S(A)\cap \tn^-(A)  \label{eqn:S+-}\\
     \n^+=\n^+(A) &:=\tn^+(A)/S^+(A), \quad & \n^-=\n^-(A) &:=\tn^-(A)/S^-(A)  \label{eqn:n+-}
\end{alignat}
Then $S(A) = S^+(A)\oplus S^-(A)$, and we have the \textit{triangular decomposition}
\[
\g(A) = \n^-(A) \oplus \h(A) \oplus \n^+(A).
\]

The images of $\tilde{e}_{A,i}$ and $\tilde{f}_{A,i}$ in $\g(A)$ are denoted by $e_{A,i}=e_i$ and $f_{A,i}=f_i$, respectively. Note that the subalgebras $\n^+(A)$ and $\n^-(A)$ in $\g(A)$ are generated by $e_i$ and $f_i$ for $i\in I$.
\comm{For $\a\in \h^\ast$, we set $\g_{\a}=\{x\in\g \mid [h,x]=\la \a, h \ra x,\ h\in\h\}$.
We say $ \a\in \h^\ast\setminus\{0\}$ is a \textit{root} if $\g_{\a}\ne \{0\}$. }
We have $\g_{\a_i}=\C e_i, \; \g_{-\a_i}=\C f_i$, and write $\g_0=\h$. Thus $\g = \bigoplus_{\a\in \bar{\Delta}} \g_\a$.
\comm{We set $\bar{\Delta}=\Delta\cup\{0\}$. 
Thus the root space decomposition is $\g=\h\oplus \bigoplus\limits_{\a\in \Delta} \g_\a = \bigoplus\limits_{\a\in \bar{\Delta}} \g_\a$.}
For $\a\in \Delta$, we also have
\begin{equation} \label{eqn:dim-=+}
\dim \g_\a = \dim \g_{-\a}.
\end{equation}

The \textit{root lattice} $Q=Q_A$ in $\h^*$ is $\Z\Pi=\bigoplus_{\a_i\in\Pi}\Z\a_i$.
We set
\[
Q^+=Q^+_A :=(\Z_{\geq 0}\Pi) \setminus \{0\},\quad 
Q^-=Q^-_A :=-Q^+.
\]
Let $\Delta^+=\Delta\cap Q^+$ and $\Delta^-=\Delta\cap Q^-$, so $\Delta = \Delta^+\cup \Delta^-$ with $\Delta^- = -\Delta^+$. The sets $\Delta^{\pm}$ are called the sets of \textit{positive} and \textit{negative} roots respectively and we set $\bar{\Delta}^+=\Delta^+\cup\{0\}$. 


\subsection{Weyl group}
The \textit{Weyl group} is the subgroup of $\mathrm{GL}(\h^*)$ generated by the \textit{simple reflections}
\[ 
w_i=w_{i,A}: \lambda \mapsto \lambda - \la \lambda, \a_i^{\vee} \ra \a_i
\]
for $i\in I$.
The contragredient action of $W$ on $\h$ is given by $\la \lambda, w_i(h)\ra =\la w_i(\lambda), h\ra$ for $i\in I, \lambda\in \h^{\ast}, h\in \h$. Explicitly, 
\[
w_i(h)=h-\la \a_i, h \ra \a_i^\vee.
\]
Given $x\in \g$, we write $\ad x: y\mapsto [x, y]$ for $y\in \g$.
Then for $i\in I$, $\ad e_i$ and $\ad f_i$ are locally nilpotent on $\mathfrak{g}$ and the following automorphism of $\g$ is well-defined by \cite[Proposition 3.4]{Ca}
\[
\widetilde{w}_{i}=\exp(\ad e_i)
\exp(\ad (-f_i))
\exp(\ad e_i).
\]
For $\b\in\bar{\Delta}$, $i\in I$, we have $\widetilde{w}_{i}(\g_\b) =\g_{w_i(\b)}$, and $\widetilde{w}_{i}(h)=w_i(h)$ for $h\in\h$ \cite[Propositions~16.14 and~16.15]{Ca}.

A root $\a\in\Delta$ is called a \emph{real root} if 
there exists $w\in W$ such that $w\a\in \Pi$. 
A root which is not real is called \emph{imaginary}.  
We denote the real roots by $\Delta_{\re}$ and the imaginary roots by $\Delta_{\im}$. Thus $\Delta = \Delta_{\re} \cup \Delta_{\im}$.  
We write $\Delta^{\pm}_{\im} =\Delta_{\im}\cap \Delta^\pm$ and $\Delta^{\pm}_{\re}=\Delta_{\re}\cap \Delta^\pm$.

We summarize some properties of the Weyl group. For a complete treatment, we refer to \cite[Chapters~3 and~5]{K}  and \cite[Chapter~16]{Ca}.

\begin{proposition} \label{prop:Wprops}
    In the above notation:
    \begin{enumerate}[label=(\roman*)]
        
        \item $W$ preserves $\Delta_{\re}$, $\Delta_{\im}^+$, and $\Delta_{\im}^-$.
        \item $\dim \g_{\a}=\dim \g_{w(\a)}$ for $w\in W$, $\a\in \bar{\Delta}$.
        \item $\dim \g_\a = 1$ for $\a\in \Delta_{\re}$.
        \item For $\a\in \bar{\Delta}$ and $\b\in \Delta_{\re}$, $\{\a+\Z\b\}\cap \bar{\Delta}$ is finite.
    \end{enumerate}
\end{proposition}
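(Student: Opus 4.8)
The final statement is Proposition~\ref{prop:Wprops}, collecting four standard facts about the Weyl group. I will prove each in turn, drawing on the results quoted earlier in the excerpt, particularly the automorphism $\widetilde{w}_i=\exp(\ad e_i)\exp(\ad(-f_i))\exp(\ad e_i)$ and its properties $\widetilde{w}_i(\g_\b)=\g_{w_i(\b)}$ and $\widetilde{w}_i(h)=w_i(h)$.

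\emph{Parts (i) and (ii).} The plan is to handle these together, since both are consequences of the action of the $\widetilde{w}_i$ on root spaces. For part (ii), the relation $\widetilde{w}_i(\g_\b)=\g_{w_i(\b)}$ shows immediately that $\widetilde{w}_i$ restricts to a linear isomorphism $\g_\b\to\g_{w_i(\b)}$, whence $\dim\g_\b=\dim\g_{w_i(\b)}$ for every simple reflection. Since the $w_i$ generate $W$, I would induct on the length of a reduced word for $w\in W$ to conclude $\dim\g_\a=\dim\g_{w(\a)}$ for all $w\in W$ and all $\a\in\bar\Delta$. For part (i), I would observe first that $W$ preserves $\Delta$: if $\g_\a\ne0$ then $\dim\g_{w(\a)}=\dim\g_\a>0$ by part (ii), so $w(\a)\in\bar\Delta$, and $w(\a)=0$ forces $\a=0$ since $w$ is invertible; hence $W$ permutes $\Delta$. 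That $W$ preserves $\Delta_{\re}$ is essentially the definition: $\a\in\Delta_{\re}$ means $v(\a)\in\Pi$ for some $v\in W$, so $v(w^{-1}(w(\a)))=v w^{-1}\cdot w(\a)\in\Pi$ shows $w(\a)$ is again real. Since $W$ preserves both $\Delta$ and $\Delta_{\re}$, it preserves $\Delta_{\im}=\Delta\setminus\Delta_{\re}$. To separate $\Delta_{\im}^+$ from $\Delta_{\im}^-$, I would invoke the standard fact (which I may take as known from \cite{K,Ca}) that each simple reflection $w_i$ sends $\Delta^+\setminus\{\a_i\}$ into $\Delta^+$; since imaginary roots are never simple and $w_i$ at worst moves $\a_i$ to $-\a_i$, it follows that $w_i$ preserves $\Delta_{\im}^+$, and by induction so does every $w\in W$; the statement for $\Delta_{\im}^-$ follows by applying $-1$ or by symmetry.

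\emph{Parts (iii) and (iv).} For part (iii), I would use part (ii) together with the generating relations. If $\a\in\Delta_{\re}$, choose $w\in W$ with $w(\a)\in\Pi$, say $w(\a)=\a_i$; then $\dim\g_\a=\dim\g_{\a_i}=\dim\C e_i=1$ by part (ii) and the identification $\g_{\a_i}=\C e_i$ recorded in the excerpt. For part (iv), fix $\a\in\bar\Delta$ and a real root $\b$. After applying a suitable $w\in W$ (which preserves finiteness of the intersection by part (ii) and permutes $\bar\Delta$) I may assume $\b$ is a simple root $\a_i$. Then the claim reduces to finiteness of the $\a_i$-string through $\a$, which follows from the $\mathfrak{sl}_2$-theory attached to the triple $(e_i,\a_i^\vee,f_i)$: the root spaces $\bigoplus_{k}\g_{\a+k\a_i}$ form a module for this $\mathfrak{sl}_2$, and since $\ad e_i,\ad f_i$ are locally nilpotent (quoted in the excerpt), standard finite-dimensional $\mathfrak{sl}_2$ representation theory forces the set of $k$ with $\g_{\a+k\a_i}\ne0$ to be a finite interval of integers symmetric about $-\langle\a,\a_i^\vee\rangle/2$.

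\emph{Main obstacle.} None of these is deep; the proposition is a summary of textbook material and the excerpt explicitly points to \cite[Chapters 3 and 5]{K} and \cite[Chapter 16]{Ca}. The only point requiring care is the separation of $\Delta_{\im}^+$ and $\Delta_{\im}^-$ in part (i), where I must correctly use that simple reflections permute $\Delta^+\setminus\{\a_i\}$ and that no imaginary root is simple; I would either cite this directly or give the short inductive argument on word length. I would most likely present the proposition as a collection of standard facts and either cite the references or sketch the $\widetilde{w}_i$-based arguments above, since reproducing the full classical development is not the purpose of this preliminaries section.
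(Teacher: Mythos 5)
Your proposal is correct and matches the paper's treatment: the paper states this proposition with no proof at all, simply deferring to \cite[Chapters~3 and~5]{K} and \cite[Chapter~16]{Ca}, and the arguments you sketch (the $\widetilde{w}_i$-automorphisms for parts (i)--(iii), and integrability plus finite-dimensional $\mathfrak{sl}_2$ representation theory for part (iv)) are exactly the standard arguments found in those references. Your instinct to present it as cited textbook material is precisely what the authors do.
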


For $\y\in Q$, the \textit{support} of $\y$, denoted as $\supp(\y)$, is the subset of $I$ for which the coefficient of $\a_i$ is non-zero in the expression of $\y$ as a linear combination of simple roots. Recall that the Dynkin subdiagram corresponding to  an index subset $J\subseteq I$ is the subdiagram of the Dynkin diagram of~$\Delta$ consisting of vertices $j\in J$ and all edges connecting these vertices.
We say $\y$ is \textit{connected} if the Dynkin subdiagram corresponding to $\supp(\y)$ is connected.
Then \cite[Proposition~16.21]{Ca} shows that all roots are connected in  this sense.

Let $C=C(A)\subset \h^\ast$ denote the elements $\y$ with $\la \y, \a_i^\vee \ra> 0$ for all $i\in I$. Let $\bar{C}$ denote its closure. We will be primarily interested in $-\bar{C}$, consisting of $\gamma \in \mathfrak{h}^\ast$ such that $\la \y, \a_i^\vee \ra \le 0$ for all $i\in I$. Let
\begin{equation} \label{eqn:KA}
    K = K_A := \{\gamma \in Q^+\cap -\bar{C} \mid \gamma \text{ is connected.}\}.
\end{equation}
We will need the following results. 

\begin{proposition} {\cite[Theorem~5.4]{K}}\label{prop:K_A}
The orbits of $K$ under the action of the Weyl group cover $\Delta^+_{\im}$. That is,
\[
\Delta^+_{\im}=\bigcup\limits_{w\in W} w K.
\]
\end{proposition}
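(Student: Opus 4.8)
The plan is to establish the two inclusions $\bigcup_{w \in W} wK \subseteq \Delta^+_{\im}$ and $\Delta^+_{\im} \subseteq \bigcup_{w\in W} wK$ separately. Throughout I write $\y = \sum_{i} k_i \a_i$ for an element $\y\in Q^+$, and I record that, with symmetrizer $D = \diag(d_i)$ having $d_i > 0$, the invariant form and the natural pairing are linked by $(\y, \a_i) = d_i \la \y, \a_i^\vee\ra$. For the inclusion $\supseteq$, since $W$ preserves $\Delta^+_{\im}$ by Proposition~\ref{prop:Wprops}(i) it suffices to show $K \subseteq \Delta^+_{\im}$, and this in turn splits into two claims: (a) every $\y \in K$ lies in $\Delta^+$, and (b) any such $\y$ is imaginary.

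Claim (b) is the easy half. For $\y \in K$ we have $\la \y, \a_i^\vee\ra \le 0$ for all $i$, so
\[
(\y, \y) = \sum_{i} k_i (\y, \a_i) = \sum_{i} k_i d_i \la \y, \a_i^\vee\ra \le 0,
\]
since each factor $k_i, d_i$ is nonnegative and $\la \y, \a_i^\vee\ra\le 0$. Because every real root has positive norm, once we know $\y$ is a root it must be imaginary. For the inclusion $\subseteq$, given $\b \in \Delta^+_{\im}$ I would run a height-lowering argument: if $\b$ is not anti-dominant, pick $i$ with $\la \b, \a_i^\vee\ra > 0$; then $w_i \b = \b - \la\b,\a_i^\vee\ra \a_i$ again lies in $\Delta^+_{\im}$ by Proposition~\ref{prop:Wprops}(i), and $\htt(w_i\b) < \htt(\b)$. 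Since heights are positive integers, iterating terminates at some $w\b$ with $\la w\b, \a_i^\vee\ra \le 0$ for every $i$, hence $w\b \in Q^+\cap(-\bar C)$. As $w\b$ is a root, its support is connected by \cite[Proposition~16.21]{Ca}, so $w\b \in K$ and $\b \in w^{-1}K$.

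It remains to prove claim (a): every $\y \in K$ is a root. I would first reduce to the indecomposable case by passing to the Kac--Moody algebra $\g(A_{\supp\y})$; this is legitimate because $\supp\y$ is connected and the anti-dominance conditions $\la\y,\a_j^\vee\ra\le0$ restricted to $j\in\supp\y$ are exactly the ones inherited from $A$, while $\Delta(A_{\supp\y})\subseteq\Delta(A)$. Then I would induct on $\htt(\y)$, the base case being a single simple root. For the inductive step I would build $\y$ from below along a chain of roots $\mu_0 = \a_{i_1},\ \mu_1,\ \dots,\ \mu_m = \y$ in which each $\mu_{t} = \mu_{t-1} + \a_{i_t}$ adds one simple root. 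The mechanism producing a new root at each step is the real-root string through $\mu_{t-1}$ in the direction $\a_{i_t}$: whenever $\la \mu_{t-1}, \a_{i_t}^\vee \ra < 0$, the $\mathfrak{sl}_2$-theory of that string (with $p-q=\la\mu_{t-1},\a_{i_t}^\vee\ra<0$ forcing $q\ge1$) yields $\mu_{t-1} + \a_{i_t} \in \Delta$.

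The main obstacle is precisely the bookkeeping for this chain: one must order the additions so that every partial sum retains connected support \emph{and} so that $\la \mu_{t-1}, \a_{i_t}^\vee\ra < 0$ at the step where $\a_{i_t}$ is introduced. It is at this point that the global anti-dominance of $\y$ and the connectedness of $\supp\y$ must be leveraged, since a naive greedy ordering need not keep the relevant pairing negative. This combinatorial induction is the technical heart of the result, and it is exactly what is carried out in Kac's \cite[Theorem~5.4]{K}; accordingly I would either complete this chain construction in full or simply invoke that theorem.
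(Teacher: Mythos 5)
Your proposal cannot be checked against an internal argument, because the paper does not prove this proposition at all: it is stated as a quotation of \cite[Theorem~5.4]{K}. Judged on its own terms, the parts you actually carry out are correct. The inclusion $\Delta^+_{\im}\subseteq\bigcup_{w\in W}wK$ by height descent is complete and is the standard argument: if $\la\b,\a_i^\vee\ra>0$ then $w_i\b\in\Delta^+_{\im}$ by Proposition~\ref{prop:Wprops}(i) and $\htt(w_i\b)<\htt(\b)$, so the process terminates at an element of $Q^+\cap(-\bar{C})$ whose support is connected by \cite[Proposition~16.21]{Ca}, hence lies in $K$. Your claim (b) is also fine: writing $\y=\sum_i k_i\a_i\in K$, Equation~(\ref{eqn:()<>}) gives $(\y,\y)=\sum_i k_iq_i\la\y,\a_i^\vee\ra\le0$, so any element of $K$ that is a root is imaginary by Proposition~\ref{prop:(h*)}(ii).

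The genuine gap is your claim (a), that every $\y\in K$ is in fact a root: this is the entire content of the theorem, and the chain induction you sketch does not establish it. The obstacle you name --- ordering the simple-root additions so that each partial sum $\mu_{t-1}$ is a root and $\la\mu_{t-1},\a_{i_t}^\vee\ra<0$ at the step where $\a_{i_t}$ enters --- is not bookkeeping; it is the whole difficulty, and no such constructive chain appears in Kac's proof. (Knowing a posteriori that every positive root can be reached through a chain of roots is of no use here, since that presupposes $\y\in\Delta$.) Kac's mechanism is instead a non-constructive maximality argument that never needs an ordering: among all roots $\mu$ with $\mu\le\y$ componentwise, choose one of maximal height; if $\mu\ne\y$, then for every $i\in\supp(\y-\mu)$ maximality gives $\mu+\a_i\notin\Delta$, so the $\a_i$-string through $\mu$ has $q=0$ and $\la\mu,\a_i^\vee\ra=p\ge0$; these inequalities are then played against the antidominance of $\y$ and the connectedness of $\supp(\y)$, via the finite/affine/indefinite trichotomy applied to a connected component of $\supp(\y-\mu)$, to produce a contradiction. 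Your stated fallback of ``simply invoking'' \cite[Theorem~5.4]{K} is circular as a proof --- the proposition \emph{is} that theorem --- although it coincides with what the paper does, namely cite Kac without proof. So you should either reproduce the maximality argument or present the statement, as the paper does, purely as a quoted result.
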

\begin{proposition} {\cite[Proposition~5.5]{K}}\label{prop:propIm}
For $\a\in \Delta_{\im}$ and $r\in \mathbb{Q}\setminus \{0\}$ such 
that $r\a\in Q$, we have $r\a\in \Delta_{\im}$.
\end{proposition}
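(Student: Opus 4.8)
The plan is to reduce $r\a$ into the fundamental set $K$, where membership is already known to force imaginarity. First I would isolate where the real difficulty lies. Since the bilinear form on $\h^*$ is quadratic, $(r\a, r\a) = r^2(\a, \a) \le 0$, so \emph{any} root of the form $r\a$ is automatically imaginary. Hence the entire content of the statement is that $r\a$ belongs to $\Delta$ at all: the task is to exhibit $r\a$ as a root, not merely as a lattice vector of nonpositive norm.

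Next I would normalize the signs. Because $\Delta^-_{\im} = -\Delta^+_{\im}$ and $r\a = (-r)(-\a)$, replacing the pair $(\a, r)$ by $(-\a, -r)$ or $(\a, -r)$ as needed lets me assume $\a \in \Delta^+_{\im}$ and $r > 0$; the hypothesis $r\a \in Q$ is unaffected by these sign changes. By Proposition~\ref{prop:K_A} I may then write $\a = w\gamma$ with $w \in W$ and $\gamma \in K$; taking $w$ to be the identity in that proposition also shows $K \subseteq \Delta^+_{\im}$, so every element of $K$ is a genuine imaginary root.

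Now set $\gamma' = r\gamma = w^{-1}(r\a)$. Since $W$ preserves the root lattice $Q$ and $r\a \in Q$ by hypothesis, we get $\gamma' \in Q$. I then verify $\gamma' \in K$ directly: multiplying $\gamma \in Q^+$ by $r > 0$ keeps all simple-root coordinates nonnegative, so $\gamma' \in Q^+\setminus\{0\}$; it does not change which coefficients vanish, so $\supp(\gamma') = \supp(\gamma)$ is connected; and $\la \gamma', \a_i^\vee\ra = r\la \gamma, \a_i^\vee\ra \le 0$ for all $i$, so $\gamma' \in -\bar{C}$. Thus $\gamma' \in K \subseteq \Delta^+_{\im}$. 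Finally, since $W$ preserves $\Delta^+_{\im}$ by Proposition~\ref{prop:Wprops}(i), we conclude $r\a = w\gamma' \in \Delta^+_{\im} \subseteq \Delta_{\im}$.

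The only genuinely delicate point is the passage to $K$: one must invoke the $W$-orbit description of $\Delta^+_{\im}$ to replace the abstract root $\a$ by a representative whose defining conditions are manifestly homogeneous under positive scaling. Once $r\a$ has been conjugated into the fundamental cone, each condition cutting out $K$ — membership in $Q^+$, connectedness of support, and the sign conditions $\la \cdot, \a_i^\vee\ra \le 0$ — is visibly preserved under multiplication by a positive rational, and the result drops out. The hypothesis $r\a \in Q$ is precisely what guarantees that the conjugated vector is an integral, rather than merely rational, point of that cone.
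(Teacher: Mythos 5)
Your proof is correct, and it is essentially the standard argument: the paper itself gives no proof of this statement (it is quoted directly from Kac, Proposition~5.5), and Kac's proof is exactly your reduction — use Proposition~\ref{prop:K_A} to conjugate into the fundamental set $K$ of Equation~(\ref{eqn:KA}), observe that the conditions defining $K$ (membership in $Q^+$, connected support, nonpositive pairings with the $\a_i^\vee$) are invariant under positive rational scaling, and conclude via $W$-invariance of $\Delta^+_{\im}$ from Proposition~\ref{prop:Wprops}. Your preliminary observations (that imaginarity of $r\a$ is automatic once $r\a\in\Delta$, and the sign normalization to $\a\in\Delta^+_{\im}$, $r>0$) are also sound.
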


\subsection{Symmetric invariant bilinear form}
Let $A$ be a symmetrizable generalized Cartan matrix with symmetrizer~$D$ and write $q_i=D_{ii}\in\Q$.
Then we can define a non-degenerate symmetric form $(\cdot,\cdot)$ on $\h^*$ \cite[Lemma~2.1]{K} such that
\begin{equation} \label{eqn:()<>}
    (\a_i, \y) =q_i \la \y, \a_i^\vee \ra
\end{equation}
for  $i\in I$ and $\y\in \h^\ast$.
The form $(\cdot, \cdot)$ induces a bijection $\Lambda:\h\rightarrow \h^\ast$ such that for $\y\in \h^\ast, h\in \h$, $\la \y, h \ra=(\y,\Lambda(h))$.
By definition of $(\cdot,\cdot)$ on $\h^\ast$, we have $\Lambda(\a_i^\vee)=q_i^{-1} \a_i$. Thus for $i,j\in I$, we have
\begin{equation} \label{eqn:qiAij}
    (\a_i,\a_j)= q_iA_{ij}.
\end{equation}
This implies $(\a_i, \a_i)>0$, that is, the squared length of a simple root is positive.

The bijection $\Lambda$ induces a non-degenerate symmetric form on $\h$ which is also denoted  $(\cdot,\cdot)$.
This form on $\h$ lifts uniquely to a non-degenerate, symmetric, bilinear and invariant form $(\cdot,\cdot)$ on all of $\g(A)$ (\cite[Theorem~2.2]{K}) in the sense that  $([x,y],z)=(x,[y,z])$ for all $x,y,z\in \g(A)$.

We summarize  properties of the form $(\cdot, \cdot)$ on  $\h^*$ and $\h$:
\begin{proposition} \cite[Propositions~3.9 \& 5.2(c)]{K}\label{prop:(h*)}
    Let $(\cdot, \cdot)$ be the form on $\h^*$ as above. 
    \begin{enumerate}[label=(\roman*)]
        \item $(\cdot, \cdot)$ is $W$-invariant, that is, $(\gamma, \delta) = (w(\gamma), w(\delta))$ for all $w\in W$ and $\gamma, \delta\in \h^*$.
        \label{prop:(h*)inv}
        \item For all $\a\in \Delta$, $\a$ is imaginary if and only if $(\a, \a)\le 0$.\label{prop:(h*)im}
    \end{enumerate}
\end{proposition}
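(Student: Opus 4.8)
The plan is to treat the two parts separately, proving the $W$-invariance in~(i) first by a direct computation and then using it as the main input for the norm characterization in~(ii). Since $W$ is generated by the simple reflections $w_i$ and the form $(\cdot,\cdot)$ is bilinear and symmetric, it suffices for part~(i) to verify $(w_i\y, w_i\delta) = (\y, \delta)$ for each generator $w_i$ and all $\y, \delta \in \h^*$; part~(i) in full then follows by induction on the length of $w\in W$.

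For this verification I would first rewrite the action of $w_i$ purely in terms of the form. Using \eqref{eqn:()<>} to replace the pairing $\la\cdot,\a_i^\vee\ra$ by $q_i^{-1}(\a_i, \cdot)$, the simple reflection becomes
\[
w_i(\lambda) = \lambda - q_i^{-1}(\a_i, \lambda)\,\a_i.
\]
Expanding $(w_i\y, w_i\delta)$ bilinearly produces, beyond the term $(\y,\delta)$, two linear correction terms and one quadratic correction term. The crucial identity is $(\a_i,\a_i) = q_i A_{ii} = 2q_i$, which follows from \eqref{eqn:qiAij}; substituting it into the quadratic term shows that it exactly cancels the two linear terms, which coincide by symmetry of the form. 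Hence $(w_i\y, w_i\delta) = (\y,\delta)$.

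For part~(ii), the direction asserting that real roots have positive norm is immediate from part~(i): if $\a \in \Delta_{\re}$ then $w\a = \a_i$ for some $w \in W$ and some simple root $\a_i$, so $(\a,\a) = (\a_i,\a_i) = 2q_i > 0$. This is the contrapositive of ``$(\a,\a)\le 0 \Rightarrow \a$ imaginary.'' For the remaining direction I would show directly that every imaginary root has non-positive norm. By bilinearity $(\a,\a) = (-\a,-\a)$, so it suffices to treat $\a \in \Delta_{\im}^+$. By Proposition~\ref{prop:K_A} there is $w \in W$ with $w^{-1}\a \in K = K_A$; setting $\y := w^{-1}\a$ and invoking part~(i), it is enough to check $(\y,\y) \le 0$ for $\y \in K$. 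Here $\y \in Q^+$ gives $\y = \sum_i k_i\a_i$ with all $k_i \ge 0$, while $\y \in -\bar{C}$ gives $\la\y, \a_i^\vee\ra \le 0$ for every $i$. Then
\[
(\y,\y) = \sum_{i} k_i (\a_i, \y) = \sum_i k_i q_i \la\y, \a_i^\vee\ra \le 0,
\]
since each summand is a product of the non-negative numbers $k_i$ and $q_i$ with the non-positive number $\la\y,\a_i^\vee\ra$.

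I expect no genuine analytic obstacle: both parts reduce to finite algebraic manipulations, namely the cancellation in the expansion of $(w_i\y, w_i\delta)$ and the sign analysis of the final sum. The one point requiring care is logical rather than computational: the imaginary direction of part~(ii) relies on the fundamental-domain description of $\Delta_{\im}^+$ in Proposition~\ref{prop:K_A}, so I would cite that classification as an independent input rather than re-deriving it from the norm, thereby avoiding any circularity between the norm characterization and the description of imaginary roots.
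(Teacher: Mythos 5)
Your proof is correct, and it is essentially the standard argument: the paper itself offers no proof (it only cites \cite[Propositions~3.9 \& 5.2(c)]{K}), and what you wrote is the proof given in that reference --- checking invariance on the generators $w_i$ via the cancellation coming from $(\a_i,\a_i)=2q_i$, getting the real-root direction of (ii) from $W$-invariance, and handling the imaginary direction by moving a positive imaginary root into the fundamental domain $K$ and using $(\y,\y)=\sum_i k_i q_i \la\y,\a_i^\vee\ra\le 0$. Your caution about circularity is also well placed and harmless here: the proof of Proposition~\ref{prop:K_A} in the cited source does not use the norm characterization, so it is a legitimate independent input.
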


By Proposition~\ref{prop:(h*)}(ii) above, if $\a\in \Delta_{\re}$, then $(\a, \a)>0$. For $\b \in \Delta_{\im}$, if $(\b,\b)=0$ then $\b$ is called {\it isotropic}, and otherwise $\b$ is called {\it non-isotropic}.

\begin{proposition}\label{prop:(h)} \cite[Theorem~2.2]{K}
Let $(\cdot, \cdot)$ be the form on $\g$ as above. 
\en [label=(\roman*)]
\item If $\a +\b\neq 0$, then $(\g_\a,\g_\b)=0$. If $\a\in\Delta$,  then $(\cdot,\cdot)|_{\g_\a+\g_{-\a}}$ is non-degenerate.
\item For $x\in \g_\a$, $y\in \g_{-\a}$, $[x,y]=(x,y)\Lambda^{-1}(\a)$.
\te
\end{proposition}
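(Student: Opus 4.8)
The plan is to obtain both parts directly from three facts already recorded in the excerpt: the invariance $([x,y],z)=(x,[y,z])$ of the form on $\g$, its global non-degeneracy, and the defining relation $\la\y,h\ra=(\y,\Lambda(h))$ of the identification $\Lambda\colon\h\to\h^*$ together with the induced form on $\h$. No deeper structure theory is needed; the work is entirely in feeding root vectors into these identities.

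For the orthogonality assertion in (i), I would fix $x\in\g_\a$ and $y\in\g_\b$ and test against an arbitrary $h\in\h$. Invariance gives $([x,h],y)=(x,[h,y])$, and since $[x,h]=-\la\a,h\ra x$ and $[h,y]=\la\b,h\ra y$, this reads $-\la\a,h\ra(x,y)=\la\b,h\ra(x,y)$, whence $\la\a+\b,h\ra(x,y)=0$ for every $h\in\h$. If $\a+\b\ne0$ then, because the natural pairing $\h^*\otimes\h\to\C$ is non-degenerate, some $h$ satisfies $\la\a+\b,h\ra\ne0$, forcing $(x,y)=0$; thus $(\g_\a,\g_\b)=0$.

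For the non-degeneracy of the restriction to $\g_\a+\g_{-\a}$ with $\a\in\Delta$, I would combine the orthogonality just proved with global non-degeneracy. The point is that $x\in\g_\a$ is automatically orthogonal to every $\g_\b$ with $\b\ne-\a$ (in particular to $\h=\g_0$, as $\a\ne0$), and symmetrically $y\in\g_{-\a}$ is orthogonal to every $\g_\b$ with $\b\ne\a$. Hence if $z=x+y\in\g_\a+\g_{-\a}$ lies in the radical of the restricted form, then pairing against $\g_\a$ and $\g_{-\a}$ shows $z$ is orthogonal to those spaces as well, so $z$ is orthogonal to all of $\g$; global non-degeneracy then yields $z=0$. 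For (ii), I would fix $x\in\g_\a$, $y\in\g_{-\a}$, so $[x,y]\in\g_0=\h$, and prove the equivalent statement $\Lambda([x,y])=(x,y)\a$ in $\h^*$ by pairing both sides against an arbitrary $h\in\h$. The right-hand side gives $(x,y)\la\a,h\ra$; the left-hand side, after unwinding the definition of $\Lambda$ and of the induced form on $\h$, equals $([x,y],h)=\la\a,h\ra(x,y)$, the last step being the same invariance computation $(h,[x,y])=([h,x],y)=\la\a,h\ra(x,y)$. As these agree for all $h$ and $\h$ separates $\h^*$, the two functionals coincide, and applying $\Lambda^{-1}$ gives $[x,y]=(x,y)\Lambda^{-1}(\a)$.

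I expect the main obstacle to be bookkeeping rather than anything conceptual: one must keep three incarnations of the form straight (on $\h^*$, on $\h$, and on $\g$), and in part (ii) apply the defining relation of $\Lambda$ in the correct slot when converting the pairing of $\Lambda([x,y])$ against $h$ into the $\h$-form $([x,y],h)$. Once that identification is made carefully, every claim reduces to the single invariance identity and the non-degeneracies already available.
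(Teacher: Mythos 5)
Your proof is correct and takes essentially the same route as the source the paper relies on for this statement: the paper gives no proof of its own (it cites Kac's Theorem~2.2), and your derivation of the orthogonality, the non-degeneracy on $\g_\a+\g_{-\a}$, and the bracket formula from invariance, global non-degeneracy, and the defining relation of $\Lambda$ is exactly the standard argument found there. The only point to keep straight is that the global non-degeneracy of the form on $\g$ you invoke is itself part of that same theorem of Kac; since the paper states it as given in the paragraph preceding the proposition, your use of it is legitimate within the paper's framing and not circular.
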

We will also use the following results.

\begin{theorem}\label{thm:MarThmA} \cite[Theorem~A]{TM}
Let $A$ be a symmetrizable generalized Cartan matrix and $\g=\g(A)$.
Let $\b_1,\b_2\in {\Delta}$ be distinct with $(\b_1,\b_2)<0$. Then for all non-zero $x_1\in \g_{\b_1}$ and  $x_2 \in \g_{\b_2}$, we have $[x_1,x_2]\ne 0$.
\end{theorem}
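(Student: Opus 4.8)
The plan is to regard the bracket $[x_1,x_2]\in\g_{\b_1+\b_2}$ as the value of $\ad(x_1)\colon\g_{\b_2}\to\g_{\b_1+\b_2}$ (equivalently $\ad(x_2)$ on $\g_{\b_1}$) and to prove that this map is injective, splitting into cases by the type of the roots. Two reductions are available throughout. First, all the relevant data lie in the subalgebra generated by $\{e_i,f_i: i\in\supp(\b_1)\cup\supp(\b_2)\}$, on which the invariant form restricts, so I may assume $I=\supp(\b_1)\cup\supp(\b_2)$ is finite. Second, both hypothesis and conclusion are $W$-invariant: each $\widetilde{w}_i$ carries $\g_{\b_k}$ isomorphically onto $\g_{w_i\b_k}$ and preserves $(\cdot,\cdot)$ (Proposition~\ref{prop:(h*)}), so $(\b_1,\b_2)$ may be replaced by any $W$-translate, and the Chevalley involution $\omega$ (an automorphism interchanging $\g_\gamma$ with $\g_{-\gamma}$ and preserving the form) lets me flip the common sign of the roots.

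Suppose first that at least one root is real; applying $\omega$ if necessary, I may take it to be a positive real root, say $\b_2$. Then $(\b_2,\b_2)>0$ and $\dim\g_{\b_2}=1$ (Proposition~\ref{prop:Wprops}), and a nonzero $e\in\g_{\b_2}$ together with $f\in\g_{-\b_2}$ and $\b_2^\vee$ spans a copy of $\mathfrak{sl}_2$ acting integrably on $\g$. Because the $\b_2$-string through $\b_1$ is finite (Proposition~\ref{prop:Wprops}), $\bigoplus_k\g_{\b_1+k\b_2}$ is a finite-dimensional, hence semisimple, $\mathfrak{sl}_2$-module in which $\g_{\b_1}$ has $\b_2^\vee$-weight $\langle\b_1,\b_2^\vee\rangle=2(\b_1,\b_2)/(\b_2,\b_2)<0$. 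Since the raising operator of a finite-dimensional $\mathfrak{sl}_2$-module is injective on every strictly negative weight space, and $x_2$ is a nonzero multiple of $e$ so that $\ad(x_2)$ is precisely this operator, we get $[x_1,x_2]\neq0$. This disposes of every pair containing a real root, for all sign combinations.

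Next I reduce the all-imaginary case to positive roots. If $\b_1,\b_2\in\Delta_{\im}$ had opposite signs, then $\b_1$ and $-\b_2$ would both lie in $\Delta_{\im}^+$; conjugating $\b_1$ into the fundamental set $K$ by some $w\in W$ (Proposition~\ref{prop:K_A}) makes $\langle w\b_1,\a_j^\vee\rangle\le0$ for all $j$, while $w(-\b_2)\in\Delta_{\im}^+$ is a non-negative combination of simple roots (Proposition~\ref{prop:Wprops}). Expanding $(\b_1,-\b_2)=(w\b_1,w(-\b_2))$ via \eqref{eqn:()<>} then produces a sum of non-positive terms, forcing $(\b_1,\b_2)\ge0$ and contradicting the hypothesis. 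Hence the two imaginary roots share a sign, and applying $\omega$ if needed I reach the decisive case: distinct $\b_1,\b_2\in\Delta_{\im}^+$ with $(\b_1,\b_2)<0$ and nonzero $x_1,x_2\in\n^+$, to show $[x_1,x_2]\neq0$.

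This all-positive imaginary case is the main obstacle, since the integrable $\mathfrak{sl}_2$ of the previous step is no longer available. I would attempt an induction on $\htt(\b_1)+\htt(\b_2)$: assuming $[x_1,x_2]=0$, apply a suitable $\ad(f_i)$ and use
\[
0=[f_i,[x_1,x_2]]=[[f_i,x_1],x_2]+[x_1,[f_i,x_2]],
\]
choosing $i$ so that the integrable $\mathfrak{sl}_2$ attached to $\a_i$ makes $\ad(f_i)$ injective on $\g_{\b_2}$ (a strictly positive $\a_i^\vee$-weight), thereby lowering $\b_2$ to $\b_2-\a_i$ and invoking the inductive hypothesis or the real-root base case. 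The difficulty I anticipate is that $(\b_1,\b_2-\a_i)$ need not stay negative, and when $\b_2$ is anti-dominant there is no descent $i$ with $\langle\b_2,\a_i^\vee\rangle>0$, so a plain height induction stalls exactly on the fundamental-domain representatives; one must either refine the inductive parameter or bypass the induction, detecting $[x_1,x_2]$ through a free Lie subalgebra via the near-freeness of $\n^+$ as in \cite[Corollary~9.12]{K}. I note finally that the bare invariant form will not settle this case: a direct Jacobi computation shows that $([x_1,x_2],[y_1,y_2])$ collapses tautologically to itself, carrying no information about the vanishing of $[x_1,x_2]$, so any successful argument must bring in positivity or structural input beyond the form.
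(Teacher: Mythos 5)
Your proposal does not establish the theorem: the decisive case, where both roots are imaginary of the same sign, is left open, and you say so yourself. The parts you do prove are correct --- the reduction to finite support, the $W$-equivariance and sign-flip via the Chevalley involution, the $\mathfrak{sl}_2$ argument when one root is real (injectivity of the raising operator on strictly negative weight spaces of the finite-dimensional string module $\bigoplus_k \g_{\b_1+k\b_2}$, using Proposition~\ref{prop:Wprops}), and the observation that two imaginary roots of opposite signs cannot pair negatively (conjugating into $K$ and expanding via Equation~(\ref{eqn:()<>})). But these are the classically accessible cases: when one root is real the conclusion is essentially standard integrable $\mathfrak{sl}_2$ theory. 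The substantive content of the cited result is exactly the configuration you reach at the end --- distinct $\b_1,\b_2\in\Delta_{\im}^+$ with $(\b_1,\b_2)<0$ and \emph{arbitrary} nonzero $x_1,x_2$ --- and there your height induction stalls precisely when $\b_2$ is anti-dominant, i.e.\ on representatives in the fundamental domain $K$, where no simple root gives a descent. The fallback you gesture at, \cite[Corollary~9.12]{K}, concerns the subalgebra spanned by root spaces $\g_{k\b}$ for multiples of a \emph{single} root and does not obviously apply to two non-proportional imaginary roots. So the proposal is an honest partial argument with a genuine, acknowledged gap at its core.

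For the comparison you were asked to make: the paper contains no proof of this statement at all. It is quoted from \cite[Theorem~A]{TM} (Marquis) and used as a black box throughout --- in Proposition~\ref{prop:semiinfRS}, Corollary~\ref{cor:MarCorC}, and Theorem~\ref{thm:dimGrowth}. The benchmark your blind attempt would have to meet is therefore Marquis's original proof, which is a genuine theorem in its own right and requires structural input beyond the finite-dimensional $\mathfrak{sl}_2$ and height-induction toolkit you deploy; your own closing remark, that the invariant form alone carries no information about the vanishing of $[x_1,x_2]$, correctly diagnoses why.
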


\begin{corollary}\label{cor:MarCorC} \cite[Corollary~C]{TM}
Under the  conditions of Theorem~\ref{thm:MarThmA},
\[
\dim \g_{\b_1+\b_2}\ge {\rm max} (\dim \g_{\b_1}, \dim \g_{\b_2}).
\]
\end{corollary}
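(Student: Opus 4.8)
The plan is to deduce the corollary from Theorem~\ref{thm:MarThmA} by a single injectivity argument combined with the grading of the root space decomposition, so that the entire content reduces to a nonvanishing statement that Theorem~\ref{thm:MarThmA} already supplies. Recall that since $\g=\bigoplus_{\y\in\bar\Delta}\g_\y$ is the eigenspace decomposition for $\ad\h$, the bracket is graded: $[\g_{\b_1},\g_{\b_2}]\subseteq \g_{\b_1+\b_2}$. Thus the only thing to establish is that a suitable bracket map \emph{into} $\g_{\b_1+\b_2}$ has trivial kernel.

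First I would fix a nonzero vector $x_2\in\g_{\b_2}$, which exists because $\b_2\in\Delta$ means $\g_{\b_2}\neq 0$, and consider the linear map
\[
\varphi\colon \g_{\b_1}\longrightarrow \g_{\b_1+\b_2},\qquad \varphi(x_1)=[x_1,x_2].
\]
This is well defined and $\C$-linear by the grading and the bilinearity of the bracket. By Theorem~\ref{thm:MarThmA}, for every nonzero $x_1\in\g_{\b_1}$ we have $[x_1,x_2]\neq 0$; hence $\ker\varphi=0$ and $\varphi$ is injective. Consequently
\[
\dim\g_{\b_1+\b_2}\ge \dim\varphi(\g_{\b_1})=\dim\g_{\b_1}.
\]

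Next I would run the symmetric argument. The hypotheses of Theorem~\ref{thm:MarThmA} are symmetric in $\b_1$ and $\b_2$: the pair is unordered, $(\b_1,\b_2)=(\b_2,\b_1)<0$, and $\b_1+\b_2=\b_2+\b_1$. So fixing instead a nonzero $x_1\in\g_{\b_1}$ and using the map $x_2\mapsto[x_1,x_2]$ gives an injection $\g_{\b_2}\hookrightarrow\g_{\b_1+\b_2}$, whence $\dim\g_{\b_1+\b_2}\ge\dim\g_{\b_2}$. Taking the larger of the two bounds yields $\dim\g_{\b_1+\b_2}\ge\max(\dim\g_{\b_1},\dim\g_{\b_2})$, as claimed.

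There is essentially no genuine obstacle here, since all of the difficulty is already packaged inside Theorem~\ref{thm:MarThmA}, whose nonvanishing conclusion is precisely what upgrades the bracket to an injective map. The only points deserving a word of care are that $\g_{\b_1+\b_2}$ really is the codomain of $\varphi$, which is handled by the grading and remains valid even in the degenerate situation $\b_1+\b_2=0$ (where the codomain is $\g_0=\h$), and that a nonzero $x_2$ is available, which holds since $\b_2$ being a root forces $\g_{\b_2}\neq 0$.
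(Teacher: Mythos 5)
Your proof is correct. The paper itself gives no argument for this corollary --- it is quoted directly from \cite[Corollary~C]{TM} --- and your derivation (fixing a nonzero $x_2\in\g_{\b_2}$, noting that $x_1\mapsto[x_1,x_2]$ lands in $\g_{\b_1+\b_2}$ by the grading, and using Theorem~\ref{thm:MarThmA} to get injectivity, then symmetrizing) is exactly the standard way to deduce it from Theorem~\ref{thm:MarThmA}; your attention to the degenerate case $\b_1+\b_2=0$, where the codomain is $\g_0=\h$, is a nice touch that keeps the argument airtight.
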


\section{Root strings and growth} \label{subsec:RS&G}
Let $A$ be a symmetrizable generalized Cartan matrix and $\Delta=\Delta(A)$. For $\a\in \bar{\Delta}$ and $\b\in \Delta$, let $S_\a(\b)\subset \Z$ be the maximal consecutive subset of the integers including $0$ such that $\a+i\b\in \bar{\Delta}$ for $i\in S_\a(\b)$.
Then the \textit{$\b$-root string through $\a$} is (\cite{Ca}, \cite{K}):
\[
R_\a(\b):=\{\a+i\b\mid {i\in S_\a(\b)}\}\subseteq\\\bar{\Delta}.
\]
Since $0\in \bar{\Delta}$, we may have $\a=0$. In this case, $R_0(\b)$ consists of all roots that are integral multiples of $\b$.
By Proposition~\ref{prop:Wprops}(ii), we have
\begin{equation} \label{eqn:wR=Rw}
    R_\a(-\b) = R_\a(\b), \qquad  w(R_\a(\b))= R_{w(\a)}(w(\b))
\end{equation}
for $w\in W$.
We also have
\begin{equation} \label{eqn:R=-R}
    R_{-\a}(\b)= -R_\a(\b),
\end{equation}
which follows from the fact that $\gamma\in \Delta$ if and only if $-\gamma\in \Delta$.
We will also consider the \textit{$\b$-root space through $\a$}
\begin{equation} \label{eqn:rtSp}
    \mathfrak{S}_\a(\b):=\bigoplus_{\gamma\in R_\a(\b)}
\g_\gamma.
\end{equation}

We recall the  definitions of bi-infinite and semi-infinite root strings from Section~\ref{Intro}.
We say $\gamma\in R_\a(\b)$ is an \emph{endpoint} of the root string if either $\gamma+\b\notin R_\a(\b)$ or $\gamma-\b\notin R_\a(\b)$.

By Proposition~\ref{prop:Wprops}(iv), if $\b\in \Delta_{\re}$, then $R_\a(\b)$ is finite.  In this case, if $\y$ is an endpoint of $R_\a(\b)$, it follows from \cite[Proposition~5.1]{K}  that $|R_\a(\b)|=|\langle \y, \b^\vee \rangle |+1$ when $\a$ and $\b$ are non-proportional.

Recall that if $\gamma$ is a real root,  $\dim \g_{\gamma}=1$.
We are interested in $\dim \g_{\gamma}$ when $\gamma$ is imaginary and of the form $\gamma=\a +i\b$ where $i\in\Z$, $\a\in\Delta$ and $\b\in \Delta_{\im}$. We will study the behavior of $\dim \g_{\a +i\b}$ along the root string $R_\a(\b)$  in the direction of $\b$. That is, we will consider $\dim \g_{\a+i\b}$ as $i\to\infty$. 
\comm{
For a function $f: \N \rightarrow \R$, we say that $f(n)$ is \textit{exponential in $n$} if there exists $b>1$ and $N>0$ such that $f(n) > b^n$ for $n>N$; that $f(n)$ is \textit{superpolynomial in $n$} if for any polynomial $p(n)$, there exists $N>0$ such that $f(n)>p(n)$ for $n>N$; that $f(n)$ is \textit{subexponential in $n$} if it is superpolynomial, and for any $b>1$, there exists $N>0$ such that $f(n)<b^n$ for $n>N$ \cite{GP}.

{\color{gray} Following \cite{GP}, we give the following definitions. Given two functions $f, g:  \N \rightarrow \R$, if $f(n) \le Cg(\alpha n)$ for all $n$ and some $C, \alpha > 0$, then we write $f\preceq g$. If $f\preceq g$ and $g\preceq f$, we write $f \sim g$. Thus, a function $f$ is said to be \textit{exponential} if $f \sim e^n$. A function $f$ is said to be \textit{subexponential} if 
\[
\lim_{n\to \infty} \frac{\ln(f(n))}{n} = 0.
\]
}
}

Following \cite{KL}, we give the following definitions. Let $\Phi$ be the set of functions from $\N$ to $\R_{>0}$ which are eventually monotone. 
If $f, g\in \Phi$ and if $f(n) \le Cg(\alpha n)$ for large enough $n$ and some $C>0$ and some $\alpha > 0$, we write $f\preceq g$. When $f\preceq g$ and $g\preceq f$, we write $f \sim g$ which defines an equivalence relation. 
The \textit{growth} of $f$ is the equivalence class $\mathcal{G}(f)$ in $\Phi/\sim$ which inherits a partial ordering $\le$ from $\preceq$ on $\Phi$. We say $\mathcal{L} \in \Phi/\sim$ is a lower bound of $\mathcal{G}(f)$ if $\mathcal{L} \le \mathcal{G}(f)$.
Let $\mathcal{P}_k := \mathcal{G}(n^k)$ for $k\in \N$ and $\mathcal{E}_r := \mathcal{G}(e^{n^r})$ for $r\in \R_{>0}$. 
We say that $f$ has \textit{polynomial growth} if $\mathcal{G}(f) = \mathcal{P}_k$ for some $k \ge 1$; 
that $f$ has \textit{exponential growth} if $\mathcal{G}(f) = \mathcal{E}_1$; 
that $f$ has \textit{subexponential growth} if $\mathcal{G}(f)\le \mathcal{E}_1$ while $\mathcal{G}(f) \not \le \mathcal{P}_k$ for any $k\in \N$. 
In particular, we see that $\mathcal{G}(f) = \mathcal{E}_{1/2}$ implies that $f$ has subexponential growth.

\comm{
\begin{defn}
Let $\g$ be a Kac--Moody algebra with root system $\Delta$. 
For $\a\in \bar{\Delta}$ and $\b\in \Delta$, we say that $R_\a(\b)$ is \emph{exponential in $\b$} or \emph{has exponential growth in $\b$} if $\dim \g_{\a+n\b}$ is at least exponential in $n$;
that $R_\a(\b)$ is \emph{subexponential in $\b$}, or \emph{has subexponential growth in $\b$} if $\dim \g_{\a+n\b}$ is at least subexponential in $n$.
\end{defn} 
}


\begin{defn}  \label{defn:RG}
Let $\g$ be a Kac--Moody algebra with root system $\Delta$. 
For $\a\in \bar{\Delta}$ and $\b\in \Delta$, the \emph{growth along $R_\a(\b)$ in the direction of $\b$} is defined to be $\mathcal{G}(\dim \g_{\a+n\b})$. 
\end{defn}

\section{Root multiplicities and root strings for non-isotropic imaginary roots} \label{sec:nonIso}
In this section, we study the growth along $R_\a(\b)$ when $\b$ is imaginary and non-isotropic, that is,  $(\b,\b)<0$. 
Our approach uses properties of certain free subalgebras of $\mathfrak{g}$. The dimensions of the homogeneous components in a  free Lie algebra $L$ are given by Witt's formula \cite{Witt}. When~$L$ has two or more generators, this formula has an exponential 
lower bound (Proposition~\ref{prop:FLAExp}). We then observe that there must exist an $s>0$ such that $\dim \g_{s\b}$ is at least 2 (Proposition~\ref{prop:s=5}).
Corollary~\ref{cor:dim_exp} then indicates that $\dim \g_{n\b}$ grows at least exponentially in~$n$. Corollary~\ref{cor:gen_exp} shows that this is true for $R_\a(\b)$. 

The following result shows that a root string with at least two roots in the direction of a non-isotropic root is infinite.

\begin{proposition} \label{prop:semiinfRS}
Let $\b$ be an imaginary root with $(\b, \b)<0$ and let $\a\in \bar{\Delta}$. If $|R_\a(\b)|>1$, then at least one of $\a\pm\N\b$ is contained in $R_\a(\b)$.
\end{proposition}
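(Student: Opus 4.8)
The plan is to argue by contradiction. Suppose $R_\a(\b)$ is nontrivial but finite, so that $S_\a(\b)=\{m, m+1, \dots, M\}$ with $m\le 0\le M$ and $M>m$. Write $\y_+=\a+M\b$ and $\y_-=\a+m\b$ for the two endpoints, characterized by $\a+(M+1)\b\notin\bar\Delta$ and $\a+(m-1)\b\notin\bar\Delta$. The idea is that a non-isotropic $\b$ makes the pairing $i\mapsto(\a+i\b,\b)=(\a,\b)+i(\b,\b)$ strictly decreasing, and I want to show this monotonicity is incompatible with the endpoint conditions unless the string is trivial.

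The engine of the argument is the following observation: if $\y$ is a root with $(\y,\b)<0$, then $\y+\b\in\bar\Delta$. When $\y\neq\b$, this is immediate from Theorem~\ref{thm:MarThmA}, since the bracket of nonzero vectors in $\g_\y$ and $\g_\b$ is then a nonzero element of $\g_{\y+\b}$; when $\y=\b$, I instead note $\y+\b=2\b\in\Delta_{\im}\subseteq\bar\Delta$ by Proposition~\ref{prop:propIm}. In contrapositive form, if $\y$ is a root with $\y+\b\notin\bar\Delta$, then $(\y,\b)\ge 0$.

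I would apply this at each endpoint. First $\y_+\neq 0$, since otherwise $\y_++\b=\b\in\Delta$ would contradict the endpoint condition; hence $\y_+$ is a genuine root. Were $(\y_+,\b)<0$, the observation would force $\y_++\b\in\bar\Delta$, again contradicting the endpoint condition, so $(\y_+,\b)\ge 0$. Since $-\b$ is also non-isotropic imaginary and $R_\a(-\b)=R_\a(\b)$ by \eqref{eqn:wR=Rw}, the same reasoning applied to $\y_-$ and $-\b$ gives $(\y_-,-\b)\ge 0$, that is $(\y_-,\b)\le 0$. Finally, $(\y_+,\b)-(\y_-,\b)=(M-m)(\b,\b)\le 0$ because $M\ge m$ and $(\b,\b)<0$, so $0\le(\y_+,\b)\le(\y_-,\b)\le 0$. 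Thus all three quantities vanish; in particular $(M-m)(\b,\b)=0$ forces $M=m$, hence $M=m=0$ and $|R_\a(\b)|=1$, a contradiction. Therefore $R_\a(\b)$ is infinite, and since $S_\a(\b)$ is a set of consecutive integers containing $0$, it must contain $\N$ or $-\N$, which is exactly the assertion that $\a+\N\b\subseteq R_\a(\b)$ or $\a-\N\b\subseteq R_\a(\b)$.

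The main obstacle is the degenerate case of the key observation in which Theorem~\ref{thm:MarThmA} does not apply, namely when the endpoint is $\pm\b$ (so the two roots coincide and the hypothesis of distinctness fails); this is precisely the gap filled by Proposition~\ref{prop:propIm}. The only other point requiring care is the verification that each endpoint is an honest (nonzero) root before Theorem~\ref{thm:MarThmA} is invoked, i.e.\ that $\y_\pm\neq 0$, which follows from the endpoint conditions as above.
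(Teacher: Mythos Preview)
Your proof is correct and rests on the same key step as the paper's: if $\gamma$ is a root with $(\gamma,\b)<0$ then $\gamma+\b\in\bar\Delta$, via Theorem~\ref{thm:MarThmA} (with Proposition~\ref{prop:propIm} covering the degenerate case). The paper organizes this as a direct inductive extension from $\a$, splitting on the sign of $(\a,\b)$, while you package the same ingredient as a contradiction at the two endpoints of a hypothetical finite string; the mathematical content is identical.
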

\begin{proof}
If $\a,\b$ are proportional, the proposition holds by Proposition~\ref{prop:propIm}, so we may assume $\a$ and 
$\b$ are not proportional. If $(\a,\b)<0$ (respectively $>0$), then by inductively applying Corollary~\ref{cor:MarCorC}, we have $\a+\N\b \subseteq R_\a(\b)$ (respectively $\a-\N\b \subseteq R_\a(\b)$).
If $(\a,\b)=0$ but $\a+\b\in \Delta$ (respectively $\a-\b\in \Delta$), then since $(\a+\b,\b)=(\b,\b)<0$, we have $\a+\N\b\subseteq R_\a(\b)$ (respectively $\a-\N\b\subseteq R_\a(\b)$).
\end{proof}

Replacing $\b$ by $-\b$ if necessary (see Equations~(\ref{eqn:dim-=+}), (\ref{eqn:wR=Rw}), and~(\ref{eqn:R=-R})), we may assume that $\a +\N \b \subseteq R_\a(\b)$. 
In this section, we will prove Theorem~\ref{thm:nonIsoExp}.
\comm{
\begin{theorem}
Let $A$ be a symmetrizable generalized Cartan matrix with 
$\Delta=\Delta(A)$ and let $\b\in\Delta_{\im}$ with $(\b, \b)<0$. Let $\a\in \bar{\Delta}$ be such that $\a+\N\b\subset \bar{\Delta}$. Then $R_\a(\b)$ has exponential growth in $\b$.
\end{theorem}
}
\subsection{Free Lie algebras} \label{subsec:FLA}
We first present the necessary background on free Lie algebras. Our main sources are  \cite{Hu, ReutenauerFLA} and \cite{CasselmanFLA}.

Let $X$ be a set.
We denote the \textit{free Lie algebra generated by} $X$ by $\mathfrak{L}(X)$. 
A \textit{basic Lie monomial of degree} $n$ over $X$ is an arbitrary iterated Lie multi-bracket involving $n$ elements from $X$.
Then $$\mathfrak{L}(X)=\bigoplus_{n\ge 1} \mathfrak{L}_n(X)$$
where $\mathfrak{L}_n(X)$ is the is the homogeneous component of degree $n$ in $\mathfrak{L}(X)$ spanned by degree $n$ Lie monomials.

The canonical embedding of $X$ in $\mathfrak{L}(X)$ is denoted  $i_X : X\rightarrow \mathfrak{L}(X)$. By the universal property of $\mathfrak{L}(X)$, any set-map $f$ from $X$ to a Lie algebra $\mathfrak{m}$ lifts uniquely to a Lie algebra homomorphism $\tilde{f}$ from $\mathfrak{L}(X)$ to $\mathfrak{m}$ such that $f = \tilde{f}\circ i_X$. 

 
Let $\mathcal A(X)$ be the free non-associative algebra on $X$.
Then $\mathcal{A}(X)$ is graded by the length of words in the elements of $X$.
Let $\mathcal{I}$ be the two-sided ideal of $\mathcal A(X)$ generated by elements
$$
(xx) \quad\text{ and } \quad
((xy)z)+((yz)x)+((zx)y)
$$
for $x,y,z\in \mathcal A(X)$. The quotient algebra 
$\mathcal A(X)/\mathcal{I}=\{x+\mathcal{I}\mid x\in \mathcal A(X)\}$
with Lie bracket
$[x+\mathcal{I},y+\mathcal{I}]=(xy)+\mathcal{I}$
is a Lie algebra. The ideal $\mathcal{I} = \bigoplus_{n\ge 1}\mathcal{I}_n$ is homogeneous, and $\mathcal{A}(X)/\mathcal{I}$ inherits the grading from $\mathcal{A}(X)$.
For $n\ge 1$, any basic Lie monomial on $n$ cosets of the form $x+\mathcal{I}$ with $x\in X$ is in the $n$-th homogeneous component $\mathcal{A}(X)_{n}/\mathcal{I}_{n}$.
For the following proposition, we refer to \cite{CasselmanFLA}.

\begin{proposition} \label{prop:FLAhomo}
Let $\mathfrak{L}(X)$ be the free Lie algebra generated by the set $X$. Then~$\mathfrak{L}(X) \cong \mathcal{A}(X)/\mathcal{I}$. Furthermore, $\mathcal{A}_n(X)/\mathcal{I}_n$ is the $n$th homogeneous component $\mathfrak{L}_n(X)$ of $\mathfrak{L}(X)$.
\end{proposition}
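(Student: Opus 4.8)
The plan is to identify $\mathcal{A}(X)/\mathcal{I}$ with the free Lie algebra by verifying that it satisfies the defining universal property, and then to match up degrees. The excerpt already checks that $\mathcal{A}(X)/\mathcal{I}$, with bracket $[x+\mathcal{I},\, y+\mathcal{I}] = (xy)+\mathcal{I}$, is a Lie algebra: the generators $(xx)$ of $\mathcal{I}$ force $[a,a]=0$ for every $a$ (whence antisymmetry, by expanding $[a+b,a+b]=0$), and the generators $((xy)z)+((yz)x)+((zx)y)$ force the Jacobi identity. It remains to show that the canonical map $X \to \mathcal{A}(X)/\mathcal{I}$, $x \mapsto x+\mathcal{I}$, is universal among maps from $X$ into Lie algebras.

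First I would take an arbitrary Lie algebra $\mathfrak{m}$ and a set map $\phi : X \to \mathfrak{m}$. Regarding $\mathfrak{m}$ as a non-associative algebra under its bracket, the universal property of the free non-associative algebra $\mathcal{A}(X)$ extends $\phi$ to a unique non-associative algebra homomorphism $\widehat\phi : \mathcal{A}(X)\to\mathfrak{m}$. Since $\mathfrak{m}$ is Lie, $\widehat\phi$ sends every generator of $\mathcal{I}$ to zero: $\widehat\phi((xx)) = [\widehat\phi(x),\widehat\phi(x)] = 0$, and likewise $\widehat\phi$ annihilates the Jacobi elements. I would then argue that $\widehat\phi$ vanishes on all of $\mathcal{I}$, after which it descends to a Lie homomorphism $\mathcal{A}(X)/\mathcal{I}\to\mathfrak{m}$ extending $\phi$; uniqueness holds because $\mathcal{A}(X)/\mathcal{I}$ is generated as a Lie algebra by the cosets $x+\mathcal{I}$, $x\in X$. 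This gives the universal property, hence a canonical isomorphism $\mathfrak{L}(X)\cong\mathcal{A}(X)/\mathcal{I}$; the isomorphism $\Phi$ itself is obtained by taking $\mathfrak{m}=\mathfrak{L}(X)$ and $\phi=i_X$ in the construction above.

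For the homogeneous components, I would use that $\mathcal{I}$ is homogeneous, so $\mathcal{A}(X)/\mathcal{I}$ carries the inherited grading $\bigoplus_{n\ge1}\mathcal{A}_n(X)/\mathcal{I}_n$ with each coset $x+\mathcal{I}$ ($x\in X$) in degree $1$. Because $\Phi$ is a Lie homomorphism sending these degree-$1$ cosets to $i_X(x)\in\mathfrak{L}_1(X)$, it carries a degree-$n$ element of $\mathcal{A}_n(X)/\mathcal{I}_n$ — a combination of cosets of basic monomials in $n$ generators — to a combination of degree-$n$ Lie monomials, i.e.\ into $\mathfrak{L}_n(X)$. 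Conversely each spanning degree-$n$ Lie monomial of $\mathfrak{L}_n(X)$ is the image under $\Phi$ of the corresponding element of $\mathcal{A}_n(X)/\mathcal{I}_n$. Hence $\Phi$ restricts to an isomorphism $\mathcal{A}_n(X)/\mathcal{I}_n\cong\mathfrak{L}_n(X)$ for each $n$, which is the claimed identification.

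The only step needing real care — the main obstacle, though a mild one — is confirming that $\widehat\phi$ kills the whole two-sided ideal $\mathcal{I}$ rather than just its listed generators. In a non-associative algebra the ideal generated by a set $S$ is spanned by products in which at least one factor lies in $S$; applying the homomorphism $\widehat\phi$ turns such a product into an iterated bracket having a factor $\widehat\phi(s)=0$, and bilinearity of the bracket forces the result to vanish. With this in hand, both assertions follow formally from the universal property.
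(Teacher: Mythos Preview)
Your argument is correct. The paper does not actually supply a proof of this proposition; it simply states the result and refers the reader to \cite{CasselmanFLA}. Your universal-property verification is precisely the standard argument one finds in such references, so in effect you have filled in what the paper leaves to citation. One small stylistic point: the cleanest way to dispatch the ``main obstacle'' you flag is to note that $\ker\widehat\phi$ is itself a two-sided ideal of $\mathcal{A}(X)$ containing the generators of $\mathcal{I}$, hence $\mathcal{I}\subseteq\ker\widehat\phi$; this avoids having to describe the elements of $\mathcal{I}$ explicitly.
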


Let $V_X$ denote the vector space generated by $X$. Recall \cite{Hu} that the free Lie algebra $ \mathfrak{L}(X)$ is isomorphic to the Lie subalgebra generated by $X$ in the tensor algebra $T(V_X) := \bigoplus_{n\in \N} V_X^{\otimes n}$.
The Lie bracket is given by 
\[
[x, y] := x\otimes y- y \otimes x.
\]
Therefore any iterated Lie bracket 
can be expanded as a linear combination of pure tensors.

The following result is central to the proofs in this section.

\begin{proposition} \label{prop:FLAExp}
    With the above notation, let  $m:=|X| = \dim \mathfrak{L}_1(X)$. If $m\ge 2$, then for all $\eps >0$, there exists  $N_\eps$ such that for $n>N_\eps$, $\dim \mathfrak{L}_n(X) > m^{(1-\eps)n}$.
\end{proposition}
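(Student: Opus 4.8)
The plan is to invoke Witt's formula \cite{Witt} for the dimensions of the graded pieces of a free Lie algebra and then extract the claimed bound by controlling the subleading terms. Witt's formula states that
\[
\dim \mathfrak{L}_n(X) = \frac{1}{n}\sum_{d\mid n}\mu(d)\,m^{n/d},
\]
where $\mu$ is the M\"obius function and $m=|X|$. First I would isolate the leading term, corresponding to $d=1$, namely $\tfrac{1}{n}m^n$, and treat the remaining summands as an error to be bounded.

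Next I would estimate that error. Since $|\mu(d)|\le 1$ and every divisor $d>1$ of $n$ satisfies $n/d\le n/2$, there are at most $n-1$ such terms, each of absolute value at most $m^{n/2}$. Hence
\[
\Bigl|\sum_{\substack{d\mid n \\ d>1}}\mu(d)\,m^{n/d}\Bigr| \le (n-1)\,m^{n/2},
\]
so that
\[
\dim \mathfrak{L}_n(X) \ge \frac{1}{n}\bigl(m^n-(n-1)\,m^{n/2}\bigr)=\frac{m^n}{n}\Bigl(1-\frac{n-1}{m^{n/2}}\Bigr).
\]
Because $m\ge 2$, the quantity $m^{n/2}$ grows exponentially while $n-1$ grows linearly, so $(n-1)/m^{n/2}\to 0$; in particular this parenthetical factor exceeds $\tfrac{1}{2}$ for all sufficiently large $n$, giving $\dim \mathfrak{L}_n(X)\ge \tfrac{m^n}{2n}$ eventually.

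Finally I would compare with the target. The desired inequality $\dim \mathfrak{L}_n(X)>m^{(1-\eps)n}$ follows once $\tfrac{m^n}{2n}>m^{(1-\eps)n}$, that is, once $m^{\eps n}>2n$. Since $m\ge 2$ forces $m^{\eps n}$ to grow exponentially for any fixed $\eps>0$, while $2n$ is linear, this holds for every $n$ beyond some threshold $N_\eps$, which completes the argument. The only delicate point is purely quantitative: one must verify that the polynomial factor $1/n$ from Witt's formula, together with the error term, is absorbed by the exponential saving $m^{-\eps n}$. This presents no genuine obstacle, since any exponential eventually dominates any polynomial; the whole proof is thus a routine asymptotic comparison once Witt's formula is in hand.
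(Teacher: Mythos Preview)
Your proposal is correct and follows essentially the same route as the paper: apply Witt's formula, isolate the leading term $m^n/n$, bound the remaining divisor-sum crudely (the paper sums the geometric series $\sum_{i\le n/2} m^i$ while you use $(n-1)m^{n/2}$, but both yield $\dim\mathfrak{L}_n(X)\ge m^n/(2n)$ for large $n$), and then absorb the polynomial factor into $m^{\eps n}$.
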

\begin{proof}
    By a classic result of Witt \cite[Satz~3]{Witt}, we have 
\begin{equation} \label{eqn:Witt}
    \dim \mathfrak{L}_n(X)= \frac{1}{n}\sum\limits_{d|n}\mu(d)m^{n/d}
\end{equation}
for $n>0$, where $\mu$ denotes the M\"obius function. By definition, $\mu(d)\ge -1$, and there are at most $n/2$ terms in the summation besides $n$. Hence, for $m>1$, 
\[
\frac{1}{n}\sum\limits_{d|n}\mu(d)m^{n/d}\ge \frac{1}{n}\left( m^n-\sum\limits_{i=1}^{n/2}m^i \right)
= \frac{1}{n}\left( m^n-\frac{m^{n/2+1}-m}{m-1} \right)
\ge\frac{m^n}{2n}
\]
Now, for all $\eps>0$, we take $n$ sufficiently large so that 
\[
\frac{m^n}{2n}>m^{(1-\eps)n}
\]
which yields the result by Equation~(\ref{eqn:Witt}).
\end{proof}

\subsection{Proof of Theorem~\ref{thm:nonIsoExp}} \label{subsec:pfThmgen_exp}

For the proof of Theorem~\ref{thm:nonIsoExp}, we will use the following important result.

\begin{proposition}
     \label{prop:s=5}
Let $\b\in\Delta_{\im}$ with $(\b, \b)<0$. 
Then for some $1\leq s \leq 5$, $\dim \g_{s\b}\ge 2$.
\end{proposition}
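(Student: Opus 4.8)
The plan is to reduce to the case $\beta\in K$ and then force a multiplicity jump onto the ray $\N\beta$ using the $\mathfrak{sl}_2$-module structure of real root strings. First I would use Proposition~\ref{prop:propIm} to note that every $k\beta$ with $k\ge1$ is again a non-isotropic imaginary root, and Proposition~\ref{prop:Wprops}(ii) together with Proposition~\ref{prop:K_A} to replace $\beta$ by a Weyl translate lying in $K$; since $w(k\beta)=k\,w(\beta)$ and $\dim\g_{k\beta}=\dim\g_{w(k\beta)}$, this changes neither the quantities $\dim\g_{s\beta}$ nor the range $1\le s\le5$. Writing $\beta=\sum_i c_i\a_i$ with $c_i\ge0$ and using \eqref{eqn:()<>}, the hypothesis $(\beta,\beta)=\sum_i c_i q_i\la\beta,\a_i^\vee\ra<0$ forces some index $i$ with $m:=\la\beta,\a_i^\vee\ra\le-1$; fix such an $i$. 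If $\dim\g_\beta\ge2$ we are done with $s=1$, so assume $\dim\g_\beta=1$.

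The two structural inputs are the following. From Theorem~\ref{thm:MarThmA} and Corollary~\ref{cor:MarCorC}, since $(s\beta,t\beta)=st(\beta,\beta)<0$ for distinct $s,t$, the sequence $\dim\g_{s\beta}$ is nondecreasing in $s$ and every bracket $[\g_{s\beta},\g_{t\beta}]$ is nonzero; this alone, however, never increases a multiplicity, so an external input is needed. That input is the real root $\a_i$: for any $\mu$, the space $\bigoplus_{j\in\Z}\g_{\mu+j\a_i}$ is a finite-dimensional module for the $\mathfrak{sl}_2$-triple $\{e_i,f_i,\a_i^\vee\}$ (finite by Proposition~\ref{prop:Wprops}(iv)), so its weight multiplicities $\dim\g_{\mu+j\a_i}$ are palindromic and unimodal, and by Proposition~\ref{prop:Wprops}(ii) they agree at $w_i$-symmetric positions. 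In particular $\dim\g_{s\beta}=\dim\g_{w_i(s\beta)}=\dim\g_{s\beta-sm\,\a_i}$, the $\a_i$-string through $s\beta$ has length at least $s|m|+1$, and the multiplicities along it are largest near its centre.

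The strategy is then to argue by contradiction: assume $\dim\g_{s\beta}=1$ for all $1\le s\le5$ and track the $\a_i$-strings through $\beta,2\beta,\dots,5\beta$ simultaneously. Decomposing each string into irreducible $\mathfrak{sl}_2$-summands, the equal, one-dimensional extreme weight spaces at the two ends of each string pin down the number of top-dimensional summands, while the nonvanishing brackets supplied by Theorem~\ref{thm:MarThmA} link the interior weight spaces of consecutive strings (for instance $[\g_\beta,\g_{2\beta+\a_i}]$ and $[\g_{\beta+\a_i},\g_{2\beta}]$ both land in $\g_{3\beta+\a_i}$). Propagating these constraints forces the interior multiplicities to accumulate and, within a bounded number of steps, to push a second dimension back onto the ray $\N\beta$, contradicting the assumption. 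The Gram matrix of $\{\a_i,\beta\}$ is Lorentzian, its determinant $2q_i(\beta,\beta)-q_i^2 m^2$ being negative by \eqref{eqn:qiAij} and the hypothesis $(\beta,\beta)<0$, so the whole analysis takes place in an effectively rank-two hyperbolic slice, and the extremal configuration there is what produces the explicit bound $s\le5$.

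I expect the main obstacle to be exactly this last point. Since Theorem~\ref{thm:MarThmA} and Corollary~\ref{cor:MarCorC} only transport multiplicity along the ray without ever increasing it, the entire increase must be extracted from the $\a_i$-symmetry, and the delicate part is not that \emph{some} multiple of $\beta$ eventually has multiplicity $\ge2$ but that this already happens for $s\le5$. Making the constant explicit requires a careful, but finite, case analysis of the possible string lengths and summand counts in the Lorentzian slice determined by $(\beta,\beta)$ and $m$.
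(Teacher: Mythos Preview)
Your proposal outlines a strategy but does not prove the statement: you explicitly identify the ``main obstacle'' (extracting the bound $s\le 5$ from the $\mathfrak{sl}_2$-string combinatorics) and then defer it to an unperformed case analysis. The machinery you assemble---$\mathfrak{sl}_2$-unimodality along $\a_i$-strings, together with brackets linking interior weights of adjacent strings---does not obviously force any multiplicity back onto the ray $\N\beta$. The $\a_i$-strings through $\beta,2\beta,\ldots$ are disjoint, and Corollary~\ref{cor:MarCorC} only says $\dim\g_{3\beta+\a_i}\ge\max(\dim\g_{\beta},\dim\g_{2\beta+\a_i})$, not that independent contributions add. So even granting everything you set up, there is no mechanism in your sketch that produces $\dim\g_{s\beta}\ge 2$ for a specific $s$, let alone $s\le 5$.

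The paper's proof bypasses all of this by invoking \cite[Corollary~9.12]{K}: since $(\beta,\beta)<0$, the subalgebra $\bigoplus_{k\ge1}\g_{k\beta}$ is \emph{free}, generated by a set $X$ containing bases of $\g_\beta^0=\g_\beta$ and $\g_{2\beta}^0=\g_{2\beta}$. Assuming $\dim\g_{s\beta}=1$ for $s\le 4$, pick $0\ne x_1\in\g_\beta$ and $0\ne x_2\in\g_{2\beta}$; these are both free generators (note $[x_1,x_1]=0$, so $x_2$ is not redundant). Then $[x_2,[x_1,x_2]]$ and $[x_1,[x_1,[x_1,x_2]]]$ both lie in $\g_{5\beta}$ but have free-Lie-algebra degrees $3$ and $4$ respectively, hence are linearly independent. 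This gives $\dim\g_{5\beta}\ge 2$ directly, with no case analysis and no appeal to real-root strings at all. The freeness result is the key input you are missing.
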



\begin{proof}
We outline the strategy of the proof. By Proposition~\ref{prop:semiinfRS}, $\dim \g_{s\b} \ge 1$. 
Suppose $\dim \g_{s\b} = 1$ for $s = 1, 2, 3, 4$. Otherwise we are done. We claim  that $\dim \g_{5\b}$ is  at least 2. To do this, we prove that there must be two linearly independent Lie monomials in $\g_{5\b}$. This is because $\bigoplus_{k\ge 1} \g_{k\b}$ is free (Step 1) and it contains a Lie subalgebra generated by two particular elements which is again free (Step 2). We observe that the two Lie monomials on the two generators are of different degrees, which gives the result (Step 3).

\textit{Step 1.} By \cite[Corollary~9.12(a)]{K}, $\bigoplus_{k\ge 1}\g_{k\b}$ is a free Lie algebra $\mathfrak{L}(X)$. Here $X$ is a basis for $\bigoplus_{k\ge 1} \g_{k\b}^0$ where
\begin{align} \label{eqn:Kac9.12}
    \g_{k\b}^0 := \{x\in \g_{k\b}\mid \,&(x, y) = 0 \text{ for every } y \text{ in the subalgebra generated by } \\
    &\g_{-i\b} \text{ for } i=1, \dots, k-1\}. \notag
\end{align}
In particular, we see that $\g_{\b}^0 = \g_{\b}$. 
Next we consider $\g_{2\b}^0$. Note that $\dim \g_{-\b} = \dim \g_\b = 1$ by the assumption above. So the subalgebra generated by $\g_{-\b}$ is itself. 
By Proposition~\ref{prop:(h)}(i), since $2\b+(-\b) \neq 0$, we have $(x, y) = 0$ for any $x \in \g_{2\b}$ and $y\in \g_{-\b}$. Hence $\g_{2\b}^0 = \g_{2\b}$. 
Let $x_1 \neq 0$ be in $\g_\b$. Then $\g_{2\b}$ cannot be spanned by $[x_1, x_1]=0$. Let $x_2\neq 0$ be in $\g_{2\b}$. Then $x_1$ and $x_2$ are linearly independent. The elements $x_1$ and $x_2$  are the two generators we consider below.

\textit{Step 2.}If a set $M$ is contained in another set $N$, then $\mathfrak{L}(M) \subseteq \mathfrak{L}(N)$. This can be seen as follows. For $M \subseteq N$, we have $T(V_M) \subseteq T(V_N)$. As free associative algebras, tensor algebras are isomorphic to the universal enveloping algebras by universal properties. Thus the universal enveloping algebra $\mathfrak{U}(\mathfrak{L}(M))$ embeds into $\mathfrak{U}(\mathfrak{L}(N))$.
Now by definition of $\g_{k\b}^0$ we see that $x_1, x_2\in X$. Let $Y := \{x_1, x_2\} \subseteq X$. Then $Y$ generates a free Lie algebra $\mathfrak{L}(Y)$ which is embedded in $\mathfrak{L}(X) = \bigoplus_{k\ge 1} \g_{k\b}$.

\textit{Step 3.} The two Lie monomials $[x_2, [x_1, x_2]]$ and $[x_1, [x_1, [x_1, x_2]]]$ are both in $\g_{5\b}$. 
Both can be expanded as non-zero combinations of pure tensors on $x_1$ and $x_2$.
Also, they are linearly independent as they are contained in different homogeneous components of $\mathfrak{L}(Y)$, as in Proposition~\ref{prop:FLAhomo}. 
Therefore $\dim \g_{5\b} \ge 2$.
\end{proof}

For $\gamma \in\Delta_{\im}$ with $(\gamma, \gamma)<0$, let $\Gp(\gamma)\subseteq \g$ be the subalgebra generated by $\g_\gamma$ and for $k\ge 1$, let
\[
\Gp(\gamma)_k=\Gp(\gamma)\cap \g_{k\gamma}.
\]
If $V$ is an vector space, recall that $\mathfrak{L}(V):=\mathfrak{L}(B)$ for any basis $B$ of $V$.
\begin{lemma} \label{lem:g+free}
    In the above notation, $\Gp(\gamma) \cong \mathfrak{L}(\g_\gamma)$.
\end{lemma}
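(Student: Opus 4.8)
The plan is to deduce the lemma directly from the freeness result already exploited in the proof of Proposition~\ref{prop:s=5}, combined with the elementary fact that the subalgebra of a free Lie algebra generated by a subset of its free generators is again free on that subset. No new machinery should be needed beyond what Steps~1 and~2 of Proposition~\ref{prop:s=5} provide.

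First I would invoke \cite[Corollary~9.12(a)]{K} exactly as in Step~1: the subalgebra $\bigoplus_{k\ge 1}\g_{k\gamma}$ of $\g$ is a free Lie algebra $\mathfrak{L}(X)$, where $X$ is a basis of $\bigoplus_{k\ge 1}\g_{k\gamma}^0$ with $\g_{k\gamma}^0$ as in Equation~(\ref{eqn:Kac9.12}). Since $\g_\gamma = \g_{1\cdot\gamma}\subseteq \bigoplus_{k\ge 1}\g_{k\gamma}$ and the latter is a subalgebra of $\g$, the subalgebra $\Gp(\gamma)$ of $\g$ generated by $\g_\gamma$ coincides with the subalgebra of $\mathfrak{L}(X)$ generated by $\g_\gamma$. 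It therefore suffices to identify that subalgebra inside $\mathfrak{L}(X)$.

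The key observation is that the degree-one part of the free generating set $X$ is a basis of $\g_\gamma$. Indeed, for $k=1$ the index range $i=1,\dots,k-1$ in Equation~(\ref{eqn:Kac9.12}) is empty, so the defining orthogonality condition is vacuous and $\g_\gamma^0=\g_\gamma$, exactly as recorded in Step~1. Hence $X_1 := X\cap \g_\gamma$ is a basis of $\g_\gamma$ with $\mathrm{span}(X_1)=\g_\gamma$, and so $\Gp(\gamma)$ is precisely the subalgebra of $\mathfrak{L}(X)$ generated by $X_1$. Because $X_1\subseteq X$, the argument of Step~2 of Proposition~\ref{prop:s=5} applies verbatim: realizing $\mathfrak{L}(X)$ inside the tensor algebra $T(V_X)$ and using $T(V_{X_1})\subseteq T(V_X)$, the Lie subalgebra generated by $X_1$ is exactly $\mathfrak{L}(X_1)$. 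Since $X_1$ is a basis of $\g_\gamma$, the convention $\mathfrak{L}(V):=\mathfrak{L}(B)$ for a basis $B$ of $V$ gives $\mathfrak{L}(X_1)=\mathfrak{L}(\g_\gamma)$, whence $\Gp(\gamma)\cong \mathfrak{L}(\g_\gamma)$.

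I expect no genuine obstacle here; the content is bookkeeping around the freeness theorem already in hand. The one point meriting care is confirming that the subalgebra generated by $X_1$ does not acquire elements of $\mathfrak{L}(X)$ lying outside $\mathfrak{L}(X_1)$, which is guaranteed precisely because the elements of $X$ are \emph{free} generators; the tensor-algebra embedding $T(V_{X_1})\subseteq T(V_X)$ makes this explicit and is the only step where the freeness of $\mathfrak{L}(X)$ is really used.
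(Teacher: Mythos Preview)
Your proposal is correct and follows essentially the same approach as the paper: invoke \cite[Corollary~9.12(a)]{K} to realize $\bigoplus_{k\ge 1}\g_{k\gamma}$ as $\mathfrak{L}(X)$, note that $\g_\gamma^0=\g_\gamma$ so a basis of $\g_\gamma$ sits inside $X$, and then apply the Step~2 argument to conclude that the subalgebra generated by $\g_\gamma$ is free on $\g_\gamma$. The paper's proof is terser but proceeds identically.
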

\begin{proof}
    We apply the argument in Steps 1 and 2 in the above proof of Proposition~\ref{prop:s=5}. In particular, $\g_\gamma^0 = \g_\gamma$ as in Equation~(\ref{eqn:Kac9.12}) in Step 1. Replacing $Y$ by $\g_\gamma$ in Step 2, we see that $\mathfrak{L}(\g_\gamma)$ is a free subalgebra in $\bigoplus_{k\ge 1} \g_{k\gamma}$.
    Then $\Gp(\gamma) = \mathfrak{L}(\g_\gamma)$ as they have the same generators.
\end{proof}

As $[\Gp(\b)_m, \Gp(\b)_n]\subseteq \Gp(\b)_{m+n}$, the $\N$-grading on $\Gp(\b)$ above coincides with the grading on $\Gp(\b)$ as a free Lie algebra.

\begin{corollary}\label{cor:dim_exp}
Let $\b\in\Delta_{\im}$ with $(\b, \b)<0$. Suppose that for some $s>0$, $\dim \g_{s\b} \ge 2$. Then $\dim \g_{n\b}$ is at least exponential in $n$.
\end{corollary}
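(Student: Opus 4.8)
The plan is to leverage the free Lie algebra structure on $\Gp(\b) = \bigoplus_{k\ge 1}\g_{k\b}$ established in Lemma~\ref{lem:g+free}, combined with the exponential lower bound from Witt's formula in Proposition~\ref{prop:FLAExp}. The key point is that although the hypothesis only gives $\dim\g_{s\b}\ge 2$ for a single $s$, this is already enough to produce \emph{two} linearly independent generators inside the free Lie algebra, and a free Lie algebra on two or more generators has exponential growth in its homogeneous components.

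First I would set $\gamma := s\b$ and consider the subalgebra $\Gp(\gamma)\subseteq\g$ generated by $\g_\gamma$. By Lemma~\ref{lem:g+free}, $\Gp(\gamma)\cong\mathfrak{L}(\g_\gamma)$, the free Lie algebra on the vector space $\g_\gamma = \g_{s\b}$. Since $\dim\g_{s\b}\ge 2$ by hypothesis, the generating set $X$ (a basis of $\g_{s\b}$) satisfies $m := |X| = \dim\g_{s\b}\ge 2$. The grading is compatible: the $n$th homogeneous component $\mathfrak{L}_n(\g_\gamma)$ lands inside $\g_{n\gamma} = \g_{ns\b}$, since a degree-$n$ Lie monomial in elements of $\g_{s\b}$ lies in $\g_{ns\b}$. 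Therefore
\[
\dim \g_{ns\b} \ge \dim \mathfrak{L}_n(\g_\gamma).
\]
Applying Proposition~\ref{prop:FLAExp} with $m\ge 2$, for every $\eps>0$ there is $N_\eps$ so that $\dim\mathfrak{L}_n(\g_\gamma) > m^{(1-\eps)n}$ for $n>N_\eps$. Hence along the subsequence of indices divisible by $s$, namely $\dim\g_{ns\b}$, we obtain exponential growth in $n$.

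The remaining step is to promote this exponential lower bound along the arithmetic subsequence $\{ns\b\}_{n\ge 1}$ to exponential growth of the full sequence $\dim\g_{n\b}$ as a function of $n$, in the precise sense of $\mathcal{G}(\dim\g_{n\b})\ge\mathcal{E}_1$ under the $\preceq$ relation from Section~\ref{subsec:RS&G}. Writing $n = qs + r$ and using that $\dim\g_{n\b}$ is controlled below by $\dim\g_{qs\b}\sim m^{q(1-\eps)} = m^{(1-\eps)(n-r)/s}$, one sees that $e^n \preceq \dim\g_{n\b}$ after absorbing the bounded shift $r$ and the constant $1/s$ into the rescaling $\alpha$ and the constant $C$ in the definition of $\preceq$; concretely, $\dim\g_{n\b}\succeq m^{n/s}$, and since $m^{1/s}>1$ this is equivalent to $e^n$ under $\sim$. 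This matches the convention that exponential growth means $\mathcal{G}(f)=\mathcal{E}_1$, where rescaling the argument by a positive constant preserves the growth class.

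The main obstacle is not conceptual but lies in handling the indexing carefully: the free Lie algebra argument only directly bounds $\dim\g_{n\b}$ at multiples of $s$, so I must be careful that the intervening values (those $n$ not divisible by $s$) do not spoil the lower bound. This is where the \emph{eventually monotone} hypothesis on functions in $\Phi$ and the flexibility of the rescaling parameter $\alpha$ in the relation $f\preceq g$ do the work: passing to the subsequence at multiples of $s$ corresponds exactly to a linear reparametrization of the argument, which is absorbed into $\alpha$, so the growth class is unaffected. I would state this reduction explicitly, invoking the definition of $\mathcal{G}$ and $\preceq$, rather than leaving it implicit, since it is the only genuinely delicate point in the argument.
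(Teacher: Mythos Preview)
Your approach is essentially the same as the paper's: set $\gamma=s\b$, invoke Lemma~\ref{lem:g+free} to identify $\Gp(\gamma)\cong\mathfrak{L}(\g_\gamma)$, apply Proposition~\ref{prop:FLAExp} to get exponential growth of $\dim\g_{ns\b}$ in $n$, and then upgrade this subsequence bound to a bound on $\dim\g_{n\b}$ for all $n$.

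There is, however, a genuine gap in your last step. You write ``using that $\dim\g_{n\b}$ is controlled below by $\dim\g_{qs\b}$'' and later appeal to ``the eventually monotone hypothesis on functions in $\Phi$'' as though it were a tool at your disposal. It is not: membership in $\Phi$ is a condition you must \emph{verify} for the specific function $n\mapsto\dim\g_{n\b}$ before the growth machinery of Section~\ref{subsec:RS&G} even applies, and the inequality $\dim\g_{(qs+r)\b}\ge\dim\g_{qs\b}$ is precisely the monotonicity statement that needs proof. Without it, nothing prevents $\dim\g_{n\b}$ from collapsing at indices not divisible by $s$, and the rescaling-by-$\alpha$ argument breaks down (indeed, $f(\alpha n)$ is only well behaved for non-integer $\alpha n$ once eventual monotonicity is known).

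The paper closes this gap with a one-line application of Corollary~\ref{cor:MarCorC}: since $(\b,\b)<0$, we have $(n\b,\b)=n(\b,\b)<0$ for every $n\ge 1$, and as $n\b\neq\b$ for $n\ge 2$, Corollary~\ref{cor:MarCorC} gives $\dim\g_{(n+1)\b}\ge\dim\g_{n\b}$. Thus $\dim\g_{n\b}$ is non-decreasing in $n$, which both places the sequence in $\Phi$ and supplies the inequality $\dim\g_{(\ell s+k)\b}\ge\dim\g_{\ell s\b}$ you need. Once you insert this, your argument and the paper's coincide.
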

\begin{proof}
Let $\gamma \in \Delta_{\im}$ with $(\gamma, \gamma)<0$. By Lemma~\ref{lem:g+free} and Proposition~\ref{prop:FLAExp}, if $\dim \g_\gamma = m \ge 2$, then for all $\eps>0$, there exists $N$, such that for $n>N$, $\dim \Gp(\gamma)_n > m^{(1-\eps)n}$. 
It is then possible to first fix $\eps$, then choose $a>1$ satisfying $a^s = m^{1-\eps}$ and $N\in \N$, such that for $n>N$, $\dim \Gp(\gamma)_n > a^{sn}$.

Set $\gamma = s\b$. Fix $a$ as above. Then for $n>N$, $\dim \g_{n\gamma} = \dim \g_{ns\b} \ge \dim \Gp(\gamma)_n > a^{sn}$.
Using Corollary~\ref{cor:MarCorC}, and the fact that $(n\b,\b)<0$ for $n\ge 1$, it follows that $\dim \g_{n\b}$ is non-decreasing in $n$. 
Hence for any $0\le k \le s-1$ and $\ell > N$, we have
\begin{equation} \label{eqn:dim>=}
    \dim \g_{(\ell s+k)\b} \ge \dim \g_{\ell s\b}.
\end{equation}
But
\[
a^{\ell s} >  a^{\ell s+k-s} = a^{-s} a^{\ell s+k},
\]
which by Equation~(\ref{eqn:dim>=}) shows $\dim \g_{n\b} >  a^{-s}a^n$ when $n> s N$. 
Pick $b>1$ with $b<a$. Then for sufficiently large $n$,
$\dim \g_{n\b} >  b^n$. This shows $\dim \g_{n\b}$ is exponential in $n$.
\end{proof}

\begin{corollary}\label{cor:gen_exp}
Let $\b$ be as above. Let $\a\in \bar{\Delta}$ be such that $\a+\N\b\subseteq \bar{\Delta}$.
Then $\mathcal{G}(\dim \g_{\a+n\b}) \not \le \mathcal{E}_1$.
\end{corollary}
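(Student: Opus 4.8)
The plan is to promote the exponential lower bound for $\dim\g_{n\b}$ obtained in Corollary~\ref{cor:dim_exp} to the shifted string $\dim\g_{\a+n\b}$, using Marquis' inequality (Corollary~\ref{cor:MarCorC}) as the transfer mechanism. After the reduction made just before the statement we may assume $\a+\N\b\subseteq R_\a(\b)$, and the sign analysis in the proof of Proposition~\ref{prop:semiinfRS} then forces $(\a,\b)\le 0$. Combining Proposition~\ref{prop:s=5} with Corollary~\ref{cor:dim_exp} produces a constant $b>1$ with $\dim\g_{n\b}>b^{n}$ for all large $n$.

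Before comparing growth classes I would confirm that $\dim\g_{\a+n\b}$ lies in $\Phi$, so that $\mathcal{G}(\dim\g_{\a+n\b})$ is defined. Since $(\a+n\b,\b)=(\a,\b)+n(\b,\b)\to-\infty$, we have $(\a+n\b,\b)<0$ for all large $n$; applying Corollary~\ref{cor:MarCorC} to the distinct roots $\a+n\b$ and $\b$ gives $\dim\g_{\a+(n+1)\b}\ge\dim\g_{\a+n\b}$, so the sequence is eventually non-decreasing and hence eventually monotone.

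The transfer step is the heart of the argument. Assume first that $\a$ and $\b$ are not proportional, and write $\a+n\b=(\a+\b)+(n-1)\b$. Here $\a+\b\in R_\a(\b)$ is a (nonzero) root, and $(\a+\b,\b)=(\a,\b)+(\b,\b)<0$ because $(\a,\b)\le 0$ and $(\b,\b)<0$. For $n\ge 2$ the root $(n-1)\b$ is imaginary by Proposition~\ref{prop:propIm} and differs from $\a+\b$ (equality would force $\a=(n-2)\b$), while $(\a+\b,(n-1)\b)=(n-1)(\a+\b,\b)<0$; Corollary~\ref{cor:MarCorC} then yields
\[
\dim\g_{\a+n\b}\ge\dim\g_{(n-1)\b}>b^{n-1}.
\]
The remaining degenerate cases, in which $\a$ is proportional to $\b$ (in particular $\a=0$ or $\a+\b=0$), are not covered by this split; there $\a+n\b$ is itself an integer multiple of the primitive imaginary root in the direction of $\b$, so the bound follows directly from Corollary~\ref{cor:dim_exp} together with the monotonicity above.

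Finally I would translate the pointwise estimate into the growth-class statement. Taking $\alpha=1/\ln b$ one gets $\dim\g_{\a+\lfloor\alpha n\rfloor\b}>b^{\lfloor\alpha n\rfloor-1}\ge e^{n}/b^{2}$ for large $n$, i.e.\ $e^{n}\preceq\dim\g_{\a+n\b}$, which is exactly the assertion $\mathcal{E}_1\le\mathcal{G}(\dim\g_{\a+n\b})$ and hence the exponential lower bound of Theorem~\ref{thm:nonIsoExp}. I do not expect a genuine obstacle here, since the substance was already carried by Corollaries~\ref{cor:dim_exp} and~\ref{cor:MarCorC}; the only points needing care are the bookkeeping ones: verifying eventual monotonicity so the growth class is well-defined, separating the proportional-root edge cases from the main transfer argument, and performing the rescaling $n\mapsto\alpha n$ correctly when passing from the pointwise exponential bound to $\mathcal{E}_1\le\mathcal{G}(\dim\g_{\a+n\b})$.
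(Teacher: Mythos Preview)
Your approach is essentially the paper's: transfer the exponential lower bound on $\dim\g_{n\b}$ from Corollary~\ref{cor:dim_exp} to $\dim\g_{\a+n\b}$ via Corollary~\ref{cor:MarCorC}, after arranging that the relevant inner product is negative. The paper handles the cases $(\a,\b)<0$ and $(\a,\b)\ge 0$ separately, in the second case shifting to $\a+n'\b$ with $n'$ minimal so that $(\a+n'\b,\b)<0$; you instead shift uniformly by~$1$.

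There is one slip: the hypothesis $\a+\N\b\subseteq\bar\Delta$ does not by itself force $(\a,\b)\le 0$. Proposition~\ref{prop:semiinfRS} shows that $(\a,\b)>0$ implies $\a-\N\b\subseteq R_\a(\b)$, but nothing rules out the string being bi-infinite (for instance $\a=-\b$, or any root sitting far enough back on a bi-infinite string), so $(\a+\b,\b)$ may still be nonnegative and your transfer step would not apply. The repair is already implicit in your monotonicity paragraph: replace the shift by~$1$ with the shift by the least $n'$ such that $(\a+n'\b,\b)<0$, exactly as the paper does, and then apply Corollary~\ref{cor:MarCorC} to $\a+n'\b$ and $(n-n')\b$. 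With that adjustment your argument goes through, and your explicit verification of eventual monotonicity and your separate treatment of the proportional case are in fact more careful than the paper's own proof.
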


\begin{proof}
In Corollary~\ref{cor:dim_exp}, we showed that there exists $b>1$, and $N>0$ such that for $n>N$, $\dim \g_{n\b}> b^n$.
If $(\a,\b)<0$, then $(\a, n\b)<0$, and by Corollary~\ref{cor:MarCorC}, 
\[
\dim \g_{\a+n\b} \ge \dim \g_{n\b}> b^n \sim e^n
\]
for $n>N$.

Otherwise for $(\a, \b)\ge 0$, since $\a+\N \b \subseteq \bar{\Delta}$, we may take $n'$  to be the least natural number such that $(\a+n'\b,\b)<0$. Then for $n>N$, we have $\dim \g_{\a+(n+n')\b} > b^{n}$.
Equivalently, for $n> N+n'$
\[
\dim \g_{\a+n\b} > b^{n-n'} \sim e^n.
\]
That is, along $R_\a(\b)$, $\dim \g_{\a+n\b}$ has lower bound $\sim e^n$. Thus, $\mathcal{G}(\dim \g_{\a+n\b}) \not \le \mathcal{E}_1$.
\end{proof}

\begin{theorem*}[\ref{thm:nonIsoExp}]
Let~$\Delta$ be the set of roots of a symmetrizable Kac--Moody algebra $\g$.
Let $\b\in\Delta_{\im}$ be non-isotropic. If $|R_\a(\b)|>1$, then at least one of $\a\pm\N\b$ is contained in $R_\a(\b)$. Thus $R_\a(\b)$ is infinite or bi-infinite.
Moreover, the growth along $R_\a(\b)$ has an exponential lower bound.
\end{theorem*}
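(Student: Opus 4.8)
The plan is to assemble the results already established in this section, since each clause of the statement has been set up in advance. The first assertion---that at least one of $\a\pm\N\b$ is contained in $R_\a(\b)$---is exactly Proposition~\ref{prop:semiinfRS}, which applies verbatim because a non-isotropic imaginary root satisfies $(\b,\b)<0$. From this, the dichotomy ``infinite or bi-infinite'' is immediate: if $\a+\N\b\subseteq R_\a(\b)$ then $\N\subseteq S_\a(\b)$, so by the classification of root strings recalled in Section~\ref{Intro}, $S_\a(\b)$ is either all of $\Z$ (bi-infinite) or contains $\N$ but not $-\N$ (semi-infinite in the direction of $\b$); the case $\a-\N\b\subseteq R_\a(\b)$ is symmetric. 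In either case $R_\a(\b)$ fails to be finite.

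For the growth assertion I would first normalize the direction. Using Equations~(\ref{eqn:dim-=+}), (\ref{eqn:wR=Rw}), and~(\ref{eqn:R=-R}), replacing $\b$ by $-\b$ leaves both the root string and the multiplicity sequence $\dim\g_{\a+n\b}$ unchanged up to reindexing, so by the first part we may assume without loss of generality that $\a+\N\b\subseteq R_\a(\b)$. The exponential lower bound is then precisely Corollary~\ref{cor:gen_exp}, whose conclusion $\mathcal{G}(\dim\g_{\a+n\b})\not\le\mathcal{E}_1$ records that the growth along $R_\a(\b)$ dominates the exponential class $\mathcal{E}_1$.

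Since the substantive work has already been carried out---the infiniteness coming from Corollary~\ref{cor:MarCorC} feeding into Proposition~\ref{prop:semiinfRS}, and the exponential growth coming from the free-subalgebra chain Proposition~\ref{prop:s=5} $\to$ Lemma~\ref{lem:g+free} $\to$ Corollary~\ref{cor:dim_exp} $\to$ Corollary~\ref{cor:gen_exp}---the theorem is essentially a repackaging of these ingredients, so I expect no genuine obstacle. The only points requiring care are the reduction to the $+\b$ direction (verifying that the multiplicity sequence is genuinely direction-symmetric, which follows from $\dim\g_\gamma=\dim\g_{-\gamma}$) and aligning the informal phrase ``exponential lower bound'' in the statement with the formal inequality supplied by Corollary~\ref{cor:gen_exp}.
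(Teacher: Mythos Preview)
Your proposal is correct and follows essentially the same route as the paper's proof: invoke Proposition~\ref{prop:semiinfRS} for the infiniteness clause, normalize the direction so that $\a+\N\b\subseteq R_\a(\b)$, and then cite Corollary~\ref{cor:gen_exp} (whose hypotheses are fed by Proposition~\ref{prop:s=5} and Corollary~\ref{cor:dim_exp}) for the exponential lower bound. The paper's write-up is terser but the logical skeleton is identical.
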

\begin{proof}

By Proposition~\ref{prop:semiinfRS}
at least one of $\a\pm\N\b$ is contained in $R_\a(\b)$.
By Proposition~\ref{prop:s=5}, $\dim \g_{5\b} \ge 2$. Take $s = 5$ in Corollary~\ref{cor:dim_exp}, and by Corollary~\ref{cor:gen_exp}, we have the desired result.
\end{proof}

We note the following special case.
\begin{corollary}
Let $\b\in\Delta_{\im}$ with $(\b, \b)<0$. For non-zero $x_1\in  \g_{\beta}$ and non-zero  $x_2\in\g_{2\beta}$, $x_1$ and $x_2$ generate a free Lie subalgebra in $\bigoplus_{k\ge 1}\g_{k\beta}$. 
\end{corollary}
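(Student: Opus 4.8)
The plan is to realize $x_1$ and $x_2$ as generators of a subalgebra of the free Lie algebra $\bigoplus_{k\ge1}\g_{k\b}$ and to show that they satisfy no relations. First I would recall, exactly as in Step~1 of the proof of Proposition~\ref{prop:s=5}, that \cite[Corollary~9.12(a)]{K} gives $L:=\bigoplus_{k\ge1}\g_{k\b}\cong\mathfrak{L}(X)$, a free Lie algebra on a homogeneous basis $X=\bigsqcup_{k\ge1}X_k$ of $\bigoplus_{k\ge1}\g_{k\b}^0$, with $X_1$ a basis of $\g_\b^0=\g_\b$. Since $x_1\in\g_\b$ and $x_2\in\g_{2\b}$ lie in distinct root spaces, they are linearly independent; the goal is that the natural map $\psi\colon\mathfrak{L}(y_1,y_2)\to L$, $y_1\mapsto x_1$, $y_2\mapsto x_2$, is injective.

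When $\dim\g_\b=1$ this is immediate and reproduces Steps~1--2 of Proposition~\ref{prop:s=5}: then $[\g_\b,\g_\b]=0$, so $\g_{2\b}^0=\g_{2\b}$ by Proposition~\ref{prop:(h)}(i), and hence $x_1,x_2$ can be taken to lie in $X$. A subset of a free generating set generates a free subalgebra on that subset, so $\mathfrak{L}(\{x_1,x_2\})\hookrightarrow\mathfrak{L}(X)=L$. For general $\dim\g_\b$ I would argue as follows. The subalgebra $M:=\langle x_1,x_2\rangle$ is a subalgebra of the free Lie algebra $L$, hence is itself free by the theorem of Shirshov and Witt (see \cite{ReutenauerFLA}). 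To pin down its rank I use the grading of $L$ by multiples of $\b$: writing $M=\bigoplus_{k\ge1}M_k$ with $M_k=M\cap\g_{k\b}$, we have $M_1=\C x_1$ and $M_2=\C x_2$, while $[M_1,M_1]=[\C x_1,\C x_1]=0$ forces $[M,M]\subseteq\bigoplus_{k\ge3}\g_{k\b}$. Consequently the images of $x_1$ and $x_2$ in the abelianization $M/[M,M]$ are nonzero and lie in distinct graded pieces, hence are linearly independent; thus $\rank M=\dim M/[M,M]\ge2$. Since $M$ is generated by two elements its rank is at most $2$, so it is exactly $2$ and $\{x_1,x_2\}$ maps to a basis of $M/[M,M]$, which for a free Lie algebra forces $\{x_1,x_2\}$ to be a free generating set. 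This yields injectivity of $\psi$ and proves the claim.

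The main obstacle is precisely the case $\dim\g_\b>1$: then $[\g_\b,\g_\b]\subseteq\g_{2\b}$ may be nonzero, so $\g_{2\b}^0\subsetneq\g_{2\b}$ and a general $x_2$ does \emph{not} lie in the free generating set $X$; the clean ``subset of generators'' argument of Proposition~\ref{prop:s=5} therefore no longer applies, and it is this gap that the Shirshov--Witt theorem together with the rank computation above fills. If one prefers to stay within the tensor-algebra toolkit already used in the paper, an alternative is a leading-term argument: decompose $x_2=u+v$ with $u\in\g_{2\b}^0$ and $v\in[\g_\b,\g_\b]$, pass to the associated graded of $T(V_X)$ for the length filtration, and reduce injectivity of $\psi$ to the $\b$-homogeneous situation in which $x_1\in\g_\b$ and $v\in\Lambda^2\g_\b$ are both length-homogeneous. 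Verifying free generation in that reduced situation is essentially as delicate, however, so I would present the Shirshov--Witt route as the primary proof.
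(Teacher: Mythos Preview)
Your argument is correct and in fact more complete than what the paper supplies. The paper states this corollary without proof, presenting it as a ``special case'' to be read off from Steps~1--2 of the proof of Proposition~\ref{prop:s=5}. But, as you rightly point out, that argument uses the hypothesis $\dim\g_\b=1$ to conclude that the subalgebra generated by $\g_{-\b}$ is just $\g_{-\b}$, hence $\g_{2\b}^0=\g_{2\b}$, hence $x_1,x_2\in X$. When $\dim\g_\b>1$ the bracket $[\g_\b,\g_\b]$ can be nonzero, an arbitrary $x_2\in\g_{2\b}$ need not lie in $\g_{2\b}^0$, and the ``subset of a free generating set'' argument no longer applies; the paper does not address this.

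Your route through the Shirshov--Witt theorem is the natural fix and is correct: $M=\langle x_1,x_2\rangle$ is a subalgebra of the free Lie algebra $L=\bigoplus_{k\ge1}\g_{k\b}$, hence free; the observation $[M_1,M_1]=0$ gives $[M,M]\subseteq\bigoplus_{k\ge3}M_k$, so $\bar x_1,\bar x_2$ are linearly independent in $M/[M,M]$ and $\rank M=2$. The final step---that a two-element generating set of a free Lie algebra of rank~$2$ is automatically a free generating set---is standard (Hopfian property of finitely generated free Lie algebras, or equivalently a graded-dimension comparison after choosing homogeneous free generators for $M$, which necessarily have degrees $1$ and $2$). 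A one-line reference for this fact would tighten the write-up, but it is not a gap.
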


\section{Root multiplicities and root strings for isotropic imaginary roots} \label{sec:Iso}
We now consider root string in the direction of an isotropic imaginary root. 
Let $\g=\g(A)$ for a symmetrizable generalized Cartan matrix $A$ as before.
Recall that $\b \in \Delta$ is said to be isotropic if $(\b, \b) = 0$.
Throughout this section, we assume that $\b$ is isotropic.
In this case, \cite[Corollary~C]{TM} and \cite[Corollary~9.12]{K} no longer apply, and a new approach is required.

In light of Proposition~\ref{prop:Wprops}(ii), Proposition~\ref{prop:K_A}, and Proposition~\ref{prop:(h*)}(i), it suffices to assume that $\a\in \Bar{\Delta}^+$ and $\b\in K$ (see Equation~(\ref{eqn:KA})) in our analysis of $R_\a(\b)$ and the  $\b$-root space $\mathfrak{S}_\a(\b)=\bigoplus_{\gamma\in R_\a(\b)}
\g_\gamma$ through $\a$ (see Equation~(\ref{eqn:rtSp})). 
In particular, $\b$ is assumed to be positive. 
Also, by definition of $K$ (see Equation~(\ref{eqn:KA})), we always have $(\a, \b)\le 0$.
Throughout the section, we assume that $|R_\a(\b)|>1$. 

We first divide the discussion into two cases: $(\a, \b) = 0$ and $(\a, \b) <0$. 
The first case requires some machinery related to affine Kac--Moody algebras, and the results in the second case are obtained by using the module theory of Heisenberg algebras. 

\begin{proposition}\label{prop:affineSupp}
Let $\mathfrak{g}$ be a symmetrizable Kac--Moody algebra with root system $\Delta$ and  $K$ as in Equation~(\ref{eqn:KA}).
Let $\b \in K$ be isotropic and let $\a\in \Bar{\Delta}^+$ be such that $|R_\a(\b)|>1$. Then $(\a, \b) = 0$ if and only if $\supp(\a) \subseteq \supp(\b)$.
\end{proposition}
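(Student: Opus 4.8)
The plan is to reduce the identity $(\a,\b)=0$ to a statement about adjacency in the Dynkin diagram, and then to extract $\supp(\a)\subseteq\supp(\b)$ from the connectedness of roots together with the nontriviality hypothesis. Write $J=\supp(\b)$ and expand $\b=\sum_{i\in J}b_i\a_i$ with all $b_i>0$. Since $\b\in K$ (see Equation~(\ref{eqn:KA})), Equation~(\ref{eqn:()<>}) gives $(\b,\a_i)=q_i\la\b,\a_i^\vee\ra\le 0$ for every $i\in I$. The first step is to exploit isotropy: in $0=(\b,\b)=\sum_{i\in J}b_i(\b,\a_i)$ every summand is nonpositive and every $b_i>0$, so $(\b,\a_i)=0$ for all $i\in J$. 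Writing $\a=\sum_i a_i\a_i$ with $a_i\ge 0$, the same sign considerations applied to $(\a,\b)=\sum_{i\in\supp(\a)}a_i(\b,\a_i)$ give the reduction
\[
(\a,\b)=0\iff(\b,\a_i)=0\text{ for all }i\in\supp(\a).
\]
This is the identity on which the whole argument rests.

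Next I would identify exactly when $(\b,\a_i)=0$. For $i\in J$ this holds by the previous step. For $i\notin J$, Equation~(\ref{eqn:qiAij}) gives $(\b,\a_i)=\sum_{j\in J}b_j q_j A_{ji}$, again a sum of nonpositive terms, so $(\b,\a_i)=0$ exactly when $A_{ji}=0$ for all $j\in J$, i.e.\ when the vertex $i$ is not adjacent to $J$ in the Dynkin diagram. Combined with the reduction, this yields
\[
(\a,\b)=0\iff\text{no vertex of }\supp(\a)\setminus J\text{ is adjacent to }J.
\]
The reverse implication of the proposition is then immediate: if $\supp(\a)\subseteq J$ then $\supp(\a)\setminus J$ is empty and the right-hand condition holds vacuously.

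For the forward implication I assume $(\a,\b)=0$, so that no vertex of $\supp(\a)\setminus J$ is adjacent to $J$, and I aim to show $\supp(\a)\subseteq J$; the case $\a=0$ is immediate, so I take $\a$ to be a positive root, whence $\supp(\a)$ is connected by \cite[Proposition~16.21]{Ca}. If $\supp(\a)$ met both $J$ and its complement, connectedness of $\supp(\a)$ would force an edge between a vertex of $\supp(\a)\setminus J$ and a vertex of $\supp(\a)\cap J\subseteq J$, contradicting non-adjacency; hence either $\supp(\a)\subseteq J$ (as desired) or $\supp(\a)\cap J=\emptyset$. The latter case is where the hypothesis $|R_\a(\b)|>1$ must enter, and I expect it to be the main obstacle: when $\supp(\a)$ and $J$ are disjoint and share no edge, then for every $i\neq 0$ the element $\a+i\b$ has support exactly $\supp(\a)\cup J$ (there is no cancellation, the supports being disjoint and $i\b\neq 0$), which is nonzero and disconnected, hence not a root. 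This forces $R_\a(\b)=\{\a\}$, contradicting nontriviality, and so rules out the disjoint case. Thus $\supp(\a)\subseteq J$ in all cases. The delicate point is precisely this last case, where connectedness of $\supp(\a)$ gives no leverage and one must use the nontriviality of the root string to exclude disjoint, mutually non-adjacent supports.
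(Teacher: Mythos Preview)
Your argument is correct. The approach differs from the paper's in two useful ways. First, to obtain $(\b,\a_i)=0$ for $i\in J$, the paper invokes the fact that $A_J$ is of affine type \cite[Proposition~16.29]{Ca} and that $\b$ is a multiple of the basic imaginary root $\delta$; you instead extract this directly from the isotropy of $\b$ together with $\b\in K$, via the nonpositive-sum identity $0=(\b,\b)=\sum_{i\in J}b_i(\b,\a_i)$. This is more elementary and self-contained. Second, for the implication $(\a,\b)=0\Rightarrow\supp(\a)\subseteq J$, the paper argues the contrapositive by using $|R_\a(\b)|>1$ to force connectedness of $\supp(\a)\cup J$, hence an edge from some $j\in\supp(\a)\setminus J$ into $J$; you instead use connectedness of $\supp(\a)$ to reduce to the disjoint-and-nonadjacent case, and then kill that case by observing $\a+i\b$ has disconnected support for all $i\neq 0$. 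Both routes use the nontriviality hypothesis at the same essential point, but yours isolates the obstruction more sharply and avoids any appeal to the affine classification.
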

\begin{proof}
Let $J = \supp(\b)$. By definition of $K$, $(\a, \b)\le 0$, so it suffices to show (i) if $\supp(\a)\subseteq J$, then $(\a, \b) = 0$;
and (ii) if $\supp(\a) \not \subseteq  J$, then $(\a, \b)<0$.

We first prove (i). By \cite[Proposition~16.29]{Ca}, the submatrix $A_J$ is of affine type. Moreover, by Equation~(\ref{eqn:qiAij}), the inner product defined on $Q_{A_J}$ agrees with the one on $Q_A$.
Then as an imaginary root of $\g(A_J)$, $\b$ is a multiple of the basic imaginary root $\delta$ (see Equation~(\ref{eqn:delta})). Thus by \cite[Section~17.1 Summary~(i)]{Ca},
\begin{equation} \label{eqn:aff0}
    (\a_i, \b) = 0, \quad \text{for }i\in J.
\end{equation}
If $\supp(\a)\subseteq J$, then $\a$ is an integral linear combination of $\a_i$ with $i\in J$, and $(\a,\b)=0$ by Equation~(\ref{eqn:aff0}). 

Now we prove (ii). We show that for at least one $j \in \supp(\a)$, we have $(\a_j, \b)<0$, and for all other $k\in \supp(\a)$, $(\a_k, \b)\le 0$. This would imply $(\a, \b) < 0$ as $\a \in \Delta^+$ is a positive integral linear combination of simple roots with indices in $\supp(\a)$.
Suppose $\supp(\a) \not \subseteq J$. 
Since $|R_\a(\b)|>1$, one of $\a\pm\b$ must be a root, hence $\supp(\a\pm \b) = \supp(\a) \cup J$ is connected by \cite[Proposition~16.21]{Ca}. 
Then there must be a $j\in \supp(\a)\setminus J$ that is a neighbor of some $i\in J$ in the Dynkin diagram.
By Equation~(\ref{eqn:()<>}), $ \la\alpha_i, \alpha_j^\vee \ra$ and $(\a_i, \a_j)$ have the same sign.
Since $ \la\alpha_i, \alpha_j^\vee \ra = A_{ij}<0$, we have $(\b,\a_j)<0$. 
For other $k\in \supp(\a)$, $k$ is either \emph{not} a neighbor of $J$, or $k\in J$.
The former gives $(\b, \a_k)\le 0$ as $A_{ik}\le 0$ for $i\neq k$, while the latter implies $(\b, \a_k) = 0$ by Equation~(\ref{eqn:aff0}). 
Therefore, $(\b, \a) \le (\b, \a_j) < 0$.
\end{proof}

\subsection{Zero inner product} \label{subsec:0prod}
We now assume that $(\a, \b) = 0$. Proposition~\ref{prop:affineSupp} tells us that $\supp(\a) \subseteq \supp(\b)=:J$. By \cite[Proposition~16.29]{Ca}, $A_J$ is of affine type.
In this subsection, we first equate root multiplicities of $\g(A)$ with those of $\g(A_J)$, for roots in the intersection of $\Delta(\g(A_J))$ and $\Delta(\g(A))$ (Proposition~\ref{prop:equalDim}), so that we need only consider the affine Kac--Moody algebra $\g(A_J)$.
Then by a collection of standard results regarding affine Kac--Moody algebras (Proposition~\ref{prop:affResults}), we derive the main result Theorem~\ref{thm:isoZero}.

Recall $\g(A) = \tg(A)/S(A)$ where $S(A)$ is the maximal ideal of $\g(A)$ that intersects $\h$ trivially.
For $i\ne j\in I$, let $S^+_{i,j}(A)$ (respectively $S^-_{i,j}(A)$) be the ideal of $\tg(A)$ generated by $(\ad(e_i))^{-A_{i,j}+1} e_j$ (respectively $(\ad(f_i))^{-A_{i,j}+1} f_j$).
The following is proven in detail in \cite[Theorem~19.30]{Ca}, assuming $A$ is symmetrizable (c.f. Equations~(\ref{eqn:S+-}) and~(\ref{eqn:n+-})):
\begin{equation} \label{eqn:S+-Sum}
    S^+(A)=\sum\limits_{i\ne j\in I} S^+_{i,j}(A), \quad S^-(A)=\sum\limits_{i\ne j\in I} S^-_{i,j}(A).
\end{equation}
We also point out that the ideals $S^+(A)$ and $S^-(A)$ are indeed graded by $Q^+$ and $Q^-$ respectively. See the proof of \cite[Theorem~19.30]{Ca}.
For $J\subseteq I$, we write $\Pi_J$ for $\Pi_{A_J}$, $\Delta_J^\pm$ for $\Delta^\pm(A_j)$ and $Q^\pm_J$ for $Q_{A_J}^\pm$, so $\Pi_J = \{\a_j\}_{j\in J} \subseteq \Pi$, $Q_J^+ = \N \Pi_J $, and $Q_J^- := -\N \Pi_J$ (see Section~\ref{sec:pre}).

\begin{proposition} \label{prop:equalDim}
Let $A = (A_{i,j})_{i, j\in I}$ be a symmetrizable generalized Cartan matrix, and $J \subseteq I$. 
Then for all $\b\in \Delta_J^+\cup \Delta_J^-$, we have 
\[
\dim(\g(A)_\b)=\dim(\g(A_J)_\b)
\]
where $\g(A_J)$ is the Kac--Moody algebra of the submatrix $A_J$.
\end{proposition}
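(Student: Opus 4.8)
The plan is to push everything into the positive part $\n^+$ and then compare the two free Lie algebras $\tn^+(A)$ and $\tn^+(A_J)$ one weight space at a time. Since $\dim \g(A)_\b = \dim \g(A)_{-\b}$ and similarly for $\g(A_J)$ by Equation~(\ref{eqn:dim-=+}), it suffices to treat $\b \in \Delta_J^+$, where $\g(A)_\b = \n^+(A)_\b$ and $\g(A_J)_\b = \n^+(A_J)_\b$. Recall from Equation~(\ref{eqn:n+-}) that $\n^+(A) = \tn^+(A)/S^+(A)$ and $\n^+(A_J) = \tn^+(A_J)/S^+(A_J)$, with both $\tn^+$ free Lie algebras, on $\{\tilde{e}_i\}_{i\in I}$ and on $\{\tilde{e}_j\}_{j\in J}$ respectively, and graded by $Q^+$ (resp.\ $Q_J^+$). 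I would therefore establish, for every $\b \in Q_J^+$, the two weight-space identities $\tn^+(A)_\b = \tn^+(A_J)_\b$ and $S^+(A)_\b = S^+(A_J)_\b$, viewed under the canonical inclusion $\tn^+(A_J) \hookrightarrow \tn^+(A)$ sending $\tilde{e}_j \mapsto \tilde{e}_j$; passing to quotients then gives $\n^+(A)_\b = \n^+(A_J)_\b$ and hence the equality of dimensions.

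The first identity is the elementary input. The subalgebra of $\tn^+(A)$ generated by $\{\tilde{e}_j\}_{j\in J}$ is itself free on these generators (as in Step~2 of the proof of Proposition~\ref{prop:s=5}), so it is precisely $\tn^+(A_J)$, and the inclusion is weight-preserving. A weight-$\b$ Lie monomial in the $\tilde{e}_i$ uses the generator $\tilde{e}_i$ exactly $c_i$ times, where $\b = \sum_i c_i \a_i$; when $\supp(\b) \subseteq J$ we have $c_i = 0$ for $i \notin J$, so every such monomial involves only the $\tilde{e}_j$ with $j \in J$. Hence $\tn^+(A)_\b = \tn^+(A_J)_\b$ for all $\b \in Q_J^+$.

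The hard part is the second identity, matching the Serre ideals. Here I would use Equation~(\ref{eqn:S+-Sum}), which gives $S^+(A) = \sum_{i\ne j\in I} S^+_{i,j}(A)$, together with the fact (stated just before Proposition~\ref{prop:equalDim}) that $S^+(A)$ is $Q^+$-graded, so that $S^+(A)_\b = \sum_{i\ne j} (S^+_{i,j}(A))_\b$. The generator $(\ad \tilde{e}_i)^{-A_{ij}+1}\tilde{e}_j$ of $S^+_{i,j}(A)$ has support $\{i,j\}$, and any weight-$\b$ element of the ideal it generates, for $\supp(\b) \subseteq J$, is a combination of iterated brackets of this generator with further generators $\tilde{e}_k$. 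A support count on the total weight forces $i, j \in J$ and every such $k \in J$; in particular the only summands $S^+_{i,j}(A)$ meeting weights supported on $J$ are those with $i,j \in J$, and for these the generator coincides with the Serre generator for $A_J$ since $(A_J)_{ij} = A_{ij}$. Consequently $(S^+_{i,j}(A))_\b = (S^+_{i,j}(A_J))_\b$ for $\supp(\b) \subseteq J$, and summing over $i \ne j \in J$ yields $S^+(A)_\b = S^+(A_J)_\b$ inside $\tn^+(A)_\b = \tn^+(A_J)_\b$.

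I expect the only genuine subtlety to be justifying that the weight-$\b$ part of $S^+(A)$, for $\b$ supported on $J$, really is spanned by brackets of the Serre generators with the $\tilde{e}_k$ alone, so that the support count applies—rather than secretly requiring brackets with elements of $\h$ or $\tn^-$. This is exactly what the $Q^+$-grading of $S^+(A)$ inside $\tn^+(A)$, combined with Equation~(\ref{eqn:S+-Sum}), supplies; once this reduction is granted, both inclusions $S^+(A_J)_\b \subseteq S^+(A)_\b$ and $S^+(A)_\b \subseteq S^+(A_J)_\b$ are immediate from the support bookkeeping. Combining the two weight-space identities gives $\dim \g(A)_\b = \dim \n^+(A)_\b = \dim \n^+(A_J)_\b = \dim \g(A_J)_\b$ for $\b \in \Delta_J^+$, and the case $\b \in \Delta_J^-$ follows by Equation~(\ref{eqn:dim-=+}).
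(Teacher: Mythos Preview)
Your proposal is correct and follows essentially the same approach as the paper: both arguments identify $\tn^+(A_J)$ with the free subalgebra of $\tn^+(A)$ generated by $\{\tilde e_j\}_{j\in J}$, invoke the Gabber--Kac decomposition of Equation~(\ref{eqn:S+-Sum}), and use a support count on the Serre generators to show that the two Serre ideals agree on weights supported in $J$. The only cosmetic difference is that the paper packages the comparison as a single isomorphism $\bar\psi:\n^+(A_J)\to(\tn^+_J+S^+)/S^+$ via the Second Isomorphism Theorem, whereas you carry out the same matching one weight space at a time; the ``subtlety'' you flag (that weight-$\b$ elements of $S^+_{i,j}(A)$ are reached by bracketing only with $\tilde e_k$'s) is exactly what the paper uses implicitly when asserting $S^+_{i,j}\cap\tn^+_J=0$ for $i\notin J$ or $j\notin J$.
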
 
\begin{proof}
It suffices to consider $\b \in \Delta_J^+$.
Let $\tn^+_J$ be the free subalgebra of $\tn^+ := \tn^+(A)$ generated by $\{\tilde{e}_{A,j}\}_{j\in J}$.
By the universal property of free Lie algebras, the tautological map on generators $\tilde{e}_{A_J,j}\mapsto \tilde{e}_{A,j}$
uniquely lifts to a Lie algebra isomorphism
\begin{equation} \label{eqn:psiIso}
    \psi:\tn^+(A_J)\rightarrow \tn^+_J.
\end{equation}
Write $S^+$ for $S^+(A)$, and $S^+_{i,j}$ for $S^+_{i, j}(A)$.
Now for all $i,j\in J$, we have
$\psi(S_{i,j}^+(A_J))= S_{i,j}^+\cap \tn^+_J$. 
As in Equation~(\ref{eqn:S+-Sum}) above, we have
\[
S^+(A_J)=\sum_{i\ne j\in J} S_{i,j}^+(A_J)\subseteq \tn^+(A_J),\quad 
S^+=\sum_{i\ne j\in I} S_{i,j}^+ \subseteq \tn^+,
\]
and hence
\[
\psi(S^+(A_J))= \sum_{i\ne j\in J} \psi(S_{i,j}^+(A_J)) = \sum_{i\ne j\in J} S_{i, j}^+\cap \tn^+_J.
\]

By definition of $S^+_{i, j}$, we get $S_{i,j}^+\cap \tn^+_J=0$ if $i\notin J$ or $j\notin J$.
As a result, $\psi(S^+(A_J))= S^+\cap \tn^+_J$, and
$\psi$ (Equation~(\ref{eqn:psiIso})) descends to an isomorphism
\begin{equation} \label{eqn:psiBar}
    \bar{\psi} :\tn^+(A_J)/S^+(A_J) \rightarrow \tn^+_J/(S^+\cap \tn^+_J) \cong (\tn^+_J+S^+)/S^+.
\end{equation}
The left side of Equation~(\ref{eqn:psiBar}) is just $\n^+(A_J) = \bigoplus_{\b \in \Delta_J^+}\g(A_J)_\b$. The isomorphism on the right side of Equation~(\ref{eqn:psiBar}) is due to the Second Isomorphism Theorem. 

Consider $\n^+_J := \bigoplus_{\b\in \Delta_J^+} \g(A)_\b$. 
This is the subalgebra in the quotient $\n^+(A) = \tn^+/S^+$ generated by $\{e_{A, j}\}_{j\in J}$. 
The preimage of $e_{A, j}$ is $\tilde{e}_{A,j}$.
Consequently, the preimage of $\n^+_J$ is generated by $\{\tilde{e}_{A,j}\}$, equal to $\tn^+_J$ by definition.
Therefore $\n^+_J \subseteq (\tn^+_J+S^+)/S^+$, and $\bar{\psi}$ establishes an isomorphism between
$\n^+(A_J) = \bigoplus_{\b \in \Delta_J^+}\g(A_J)_\b$ and $\n^+_J := \bigoplus_{\b\in \Delta_J^+} \g(A)_\b$.
As all the ideals considered here are graded by $Q^+_J$, such an isomorphism is a graded isomorphism.
\end{proof}

Next, we summarize the results of roots and multiplicities for affine Kac--Moody algebras. 
We refer to a classic treatise by Carter \cite[Section~18]{Ca}, but c.f. \cite{K}. 
We borrow notations from \cite{Ca} below. Let $L^0$ be the finite semisimple Lie algebra associated with the generalized Cartan matrix $A$ of affine type, and $H^0$ the Cartan subalgebra of $L^0$. 
Let
\[
\hat{\mathfrak{L}}(L^0) := \C[t, t^{-1}]\otimes L^0 \oplus \C c \oplus\C d
\]
where $c$ is a certain central element and $d$ a derivation operator.
Let $H = (1 \otimes H^0) \oplus \C c \oplus \C d$.
Let $\delta\in H^*$ be defined by 
\begin{equation} \label{eqn:delta}
    \delta(H^0)=0,\quad \delta(c)=0,\quad \delta(d)=1. 
\end{equation}
The root  $\delta$ is called the basic imaginary root.
Then $\hat{\mathfrak{L}}(L^0)$ accounts for all the untwisted affine Kac--Moody algebras. By \cite[Theorem~18.15]{Ca}, $\h(A) = \h = H$.
Certain order 2 or 3 automorphisms $\tau$ are defined on $\hat{\mathfrak{L}}(L^0)$, and the fixed point subalgebras
$\hat{\mathfrak{L}}(L^0)^\tau$
account for the twisted affine Kac--Moody algebras.

\begin{proposition} \label{prop:affResults}
Let $\g = \g(A)$ where $A$ is a generalized Cartan matrix of affine type. Then
    \begin{enumerate}[label=(\roman*)]
        \item Roots strings consisting entirely of real roots are bi-infinite in the direction of $\delta$.
        \item Imaginary roots are of the form  $k\delta$ for $k\in \Z \setminus \{0\}$.
        \item Imaginary roots have at most two different multiplicities which occur periodically.
        \item For $k\neq -\ell$, $[\g_{k\delta}, \g_{\ell\delta}] = 0$.
    \end{enumerate}
\end{proposition}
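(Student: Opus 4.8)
The plan is to prove all four parts from the loop-algebra realization of affine Kac--Moody algebras recalled just above, so that each statement becomes a computation in $\hat{\mathfrak{L}}(L^0)$ (untwisted) or in its fixed-point subalgebra $\hat{\mathfrak{L}}(L^0)^\tau$ (twisted). Write $r\in\{1,2,3\}$ for the order of $\tau$ (with $r=1$ in the untwisted case), fix a $\tau$-stable Cartan $H^0\subseteq L^0$, and let $\zeta$ be a primitive $r$-th root of unity, giving the eigenspace decompositions $L^0=\bigoplus_{\bar j\in\Z/r}L^0_{\bar j}$ and $H^0=\bigoplus_{\bar j}H^0_{\bar j}$, where $\tau$ acts on $L^0_{\bar j}$ by $\zeta^{j}$. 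The key point is that the $t$-grading coincides with the $\delta$-grading: the degree-$k$ part of $\g$ is $t^k\otimes L^0_{\bar k}$, so every root has the form $\mu+k\delta$ with $\mu$ a nonzero $H^0$-weight of $L^0_{\bar k}$, while $\g_{k\delta}=t^k\otimes(H^0\cap L^0_{\bar k})=t^k\otimes H^0_{\bar k}$.

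For (ii) I would use the invariant form: since $(\delta,\delta)=0$ and $(\delta,\mu)=0$ for every finite weight $\mu$, we get $(\mu+k\delta,\mu+k\delta)=(\mu,\mu)$. As the form is positive definite on the real span of the finite weights, $(\mu,\mu)\ge 0$ with equality exactly when $\mu=0$; hence $\mu+k\delta$ is imaginary iff $\mu=0$, i.e.\ iff it equals $k\delta$, which is a root precisely when $H^0_{\bar k}\ne 0$. This is (ii). For (iv), take $k,\ell\ne 0$ with $k\ne-\ell$ and $x\in H^0_{\bar k}$, $y\in H^0_{\bar\ell}$; the bracket in $\hat{\mathfrak{L}}(L^0)$ sends $t^k\otimes x,\ t^\ell\otimes y$ to $t^{k+\ell}\otimes[x,y]$ together with a central contribution proportional to $c$ that is supported on $k+\ell=0$, and both terms vanish because $H^0$ is abelian and $k\ne-\ell$. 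Restricting to the $\tau$-fixed subalgebra does not change this, so $[\g_{k\delta},\g_{\ell\delta}]=0$.

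For (iii) the identity $\dim\g_{k\delta}=\dim H^0_{\bar k}$ reduces the claim to counting eigenspace dimensions of $\tau|_{H^0}$. In the untwisted case every $H^0_{\bar k}=H^0$, so $\dim\g_{k\delta}=\rank L^0$ is constant. When $r=2$ only the two dimensions $\dim H^0_{\bar 0}$ and $\dim H^0_{\bar 1}$ occur, alternating with period $2$. When $r=3$ (type $D_4^{(3)}$) I would observe that $H^0_{\bar 1}$ and $H^0_{\bar 2}$ are interchanged by the complex-conjugate symmetry and hence have equal dimension, so again only the two values $\dim H^0_{\bar 0}$ and $\dim H^0_{\bar 1}$ appear, now with period $3$. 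In every case at most two multiplicities occur and they repeat periodically, which is (iii).

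Finally, for (i) I would show that a nontrivial $\delta$-string through a real root is bi-infinite; such a string never meets an imaginary root, since if $\mu\ne 0$ then $\mu+k\delta$ is never a multiple of $\delta$. Writing a real root as $\mu+k\delta$ with $\mu\ne 0$, nontriviality means $\mu$ is a weight of two \emph{consecutive} eigenspaces $L^0_{\bar k}$ and $L^0_{\overline{k+1}}$; the lemma I would prove is that such a $\mu$ is then a weight of \emph{every} $L^0_{\bar j}$, whence $\mu+j\delta\in\Delta$ for all $j\in\Z$ and the string is bi-infinite. For $r\le 2$ this is immediate, as two consecutive residues exhaust $\Z/r$. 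The main obstacle is the twisted cases with several root lengths, above all $A_{2\ell}^{(2)}$: there I would verify the lemma from the explicit $\tau$-eigenspace decomposition of $L^0$, using that the weight sets of $L^0_{\bar j}$ for $\bar j\ne\bar 0$ coincide and are contained in that of $L^0_{\bar 0}$, so that a nonzero weight lies either in $L^0_{\bar 0}$ alone (giving a trivial string) or in all eigenspaces (giving a bi-infinite one). This case-by-case verification, equivalently read off from the root-system tables in \cite[Section~18]{Ca}, is the delicate part; everything else follows directly from the loop realization.
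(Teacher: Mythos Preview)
Your approach is correct and essentially the same as the paper's: both rest on the loop-algebra realization, with the paper citing the relevant facts from \cite[\S\S16--18]{Ca} while you unpack them into direct computations in $\hat{\mathfrak L}(L^0)^\tau$. Your invariant-form argument for (ii), the bracket computation for (iv), and the identification $\dim\g_{k\delta}=\dim H^0_{\bar k}$ for (iii) are precisely what lies behind the cited results.

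One wrinkle in your treatment of (i): $A_{2\ell}^{(2)}$ has $r=2$, so your own observation that two consecutive residues exhaust $\Z/r\Z$ already disposes of it --- it is not an obstacle at all. Moreover, your claimed containment (nonzero weights of $L^0_{\bar j}$ for $\bar j\neq\bar0$ lie among those of $L^0_{\bar0}$) actually \emph{fails} for $A_{2\ell}^{(2)}$: the weights $\pm 2e_i$ occur in $L^0_{\bar1}$ but are not roots of $L^0_{\bar0}\cong\mathfrak{so}_{2\ell+1}$. This does no damage, since that case is already handled. The only case genuinely requiring your weight-set lemma is $r=3$, i.e.\ $D_4^{(3)}$, where the containment does hold: $L^0_{\bar1}$ and $L^0_{\bar2}$ are both the $7$-dimensional $G_2$-module, whose nonzero weights are the short roots of $G_2$ and hence lie among the roots of $L^0_{\bar0}$.
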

\begin{proof}
Part (i) follows from \cite[Theorem~17.17]{Ca} which contains a complete list of real roots. In particular, a real root is always of the form of $x\a+ y\delta$ where $\a$ is a real root of $L^0$, $x$ is either 1 or $\frac{1}{2}$, and $y \in \Z$, $2\Z$, $3\Z$, or $\Z-\frac{1}{2}$.
Part (ii) follows from \cite[Theorem~16.27(ii)]{Ca}. 
For part (iii), in the untwisted case, we have $\g_{k\delta} = t^k \otimes H^0$ which has constant multiplicity $\dim H^0$, while in the twisted case, we refer to \cite[Corollary~18.10]{Ca}, which explicitly describes two multiplicities in cases $\widetilde{\mathrm{B}}^\text{t}_\ell, \widetilde{\mathrm{C}}^\text{t}_\ell, \widetilde{\mathrm{F}}^\text{t}_4$ and $\widetilde{\mathrm{G}}^\text{t}_2$ that occur by a period equal to the order of $\tau$, and to \cite[Corollary~18.15]{Ca}, which says the multiplicity of $k\delta$ is $\ell$ in $\widetilde{\mathrm{C}}'_{\ell}$, and $1$ in $\widetilde{\mathrm{A}}'_1$.
Part (iv) is due to the fact that $\g_{k\delta}$ is always a subspace of $t^k \otimes H^0$ and we refer to a case-by-case discussion in \cite[Theorems~18.5, 18.9, and 18.14]{Ca}.
Then the definition of the Lie bracket on $\g \subseteq \hat{\mathfrak{L}}(L^0)$ (in \cite[Section~18.1]{Ca}) tells us that
\[
[t^k \otimes H^0, t^\ell \otimes H^0] = k\delta_{k, -\ell}\la H^0, H^0 \ra c = 0
\]
for $k\neq -\ell$.
\end{proof}

\comm{
\begin{theorem}
Let $A$ be a symmetrizable Cartan matrix, $\b \in \Delta_{\im}$ be isotropic, and $\a\in \Bar{\Delta}^+$ be such that $|R_\a(\b)|>1$. Suppose $(\a, \b) = 0$. Then $R_\a(\b)$ is always bi-infinite, and
\en
\item If $\a$ is real, $R_\a(\b)$ consists only of real roots which have multiplicity 1.
\item If $\a$ is imaginary, $R_\a(\b)$ consists of imaginary roots only,
and the multiplicities take on at most 3 values, at most 2 of which occur periodically.
\te
\end{theorem}}

\begin{theorem*}[\ref{thm:isoZero}]
Let~$\Delta$ be the set of roots of a symmetrizable Kac--Moody algebra $\g$. 
Let $\b \in \Delta_{\im}$ be isotropic, and $\a\in \Bar{\Delta}$ be such that $|R_\a(\b)|>1$. Suppose $(\a, \b) = 0$. Then $R_\a(\b)$ is always bi-infinite, and
\begin{enumerate}[label=(\roman*)]
\item If $\a$ is real, $R_\a(\b)$ consists only of real roots which have multiplicity 1.
\item If $\a$ is imaginary, $R_\a(\b)$ consists only of imaginary roots,
and their multiplicities take at most 3 values, at most 2 of which occur periodically.
\end{enumerate}
\end{theorem*}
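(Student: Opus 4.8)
The plan is to reduce everything to the affine Kac--Moody algebra $\g(A_J)$, where $J := \supp(\b)$, and then to read off each assertion directly from the affine facts collected in Proposition~\ref{prop:affResults}. First I would carry out the normalization explained at the start of Section~\ref{sec:Iso}: using Equations~(\ref{eqn:wR=Rw}) and~(\ref{eqn:R=-R}), Proposition~\ref{prop:Wprops}(ii), Proposition~\ref{prop:K_A} and Proposition~\ref{prop:(h*)}(i), I may assume $\a \in \bar{\Delta}^+$ and $\b \in K$, since none of the conclusions---bi-infiniteness, the multiset of multiplicities, or the real/imaginary type of the members---is disturbed by a Weyl reflection or a sign change. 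As $(\a,\b) = 0$, Proposition~\ref{prop:affineSupp} yields $\supp(\a) \subseteq \supp(\b) = J$, and $A_J$ is of affine type by \cite[Proposition~16.29]{Ca}. Since $\supp(\a)\subseteq J$, every $\a + i\b$ is supported in $J$.

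Next I would transport the whole computation into $\g(A_J)$. The graded isomorphism $\bar{\psi}$ produced in the proof of Proposition~\ref{prop:equalDim} shows that an element of $Q$ supported in $J$ is a root of $\g(A)$ exactly when it is a root of $\g(A_J)$, with the same multiplicity; and by Equation~(\ref{eqn:qiAij}) the form on $Q_J$ coming from $A_J$ coincides with the one coming from $A$, so by Proposition~\ref{prop:(h*)}(ii) a root supported in $J$ has the same real/imaginary type in both algebras. Thus $R_\a(\b)$, its multiplicities, and the types of its members may all be computed inside the affine algebra $\g(A_J)$; I write $\delta$ for its basic imaginary root. Because $\b$ is positive and isotropic imaginary, Proposition~\ref{prop:affResults}(ii) forces $\b = k\delta$ for some integer $k \ge 1$.

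I would then split on the type of $\a$. If $\a$ is real, it is not a multiple of $\delta$, so no $\a + n\delta$ is a multiple of $\delta$ and hence none is imaginary (or $0$); the maximal $\delta$-string through $\a$ therefore consists entirely of real roots and is bi-infinite by Proposition~\ref{prop:affResults}(i). Passing to every $k$th term shows that $R_\a(\b)$ is bi-infinite and real, with each multiplicity equal to $1$ by Proposition~\ref{prop:Wprops}(iii); this gives~(i). If instead $\a$ is imaginary (or $\a = 0$), then $\a = m\delta$ for some integer $m \ge 0$ by Proposition~\ref{prop:affResults}(ii), so $\a + i\b = (m+ik)\delta \in \bar{\Delta}$ for every $i \in \Z$, whence $S_\a(\b) = \Z$ and $R_\a(\b)$ is bi-infinite. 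Its nonzero members are the imaginary roots $j\delta$, whose multiplicities take at most two values, occurring periodically, by Proposition~\ref{prop:affResults}(iii). A non-imaginary member can arise only as $\a + i_0\b = 0$, which happens at a single index precisely when $k \mid m$; at that index the ``multiplicity'' is $\dim \g_0 = \dim \h$. This single non-periodic value is what promotes the count to at most three values, at most two of which occur periodically, proving~(ii).

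The reduction is largely bookkeeping built on top of the cited propositions; the step needing genuine care is the imaginary case, where one must track the possible passage of the string through $0$. This passage is exactly the origin of the third multiplicity value $\dim\h$ and explains the precise phrasing ``at most $3$ values, at most $2$ of which occur periodically''.
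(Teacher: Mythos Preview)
Your proof is correct and follows essentially the same route as the paper: normalize to $\a\in\bar\Delta^+$, $\b\in K$, use Proposition~\ref{prop:affineSupp} and Proposition~\ref{prop:equalDim} to reduce to the affine subalgebra $\g(A_J)$, and then read off the conclusions from Proposition~\ref{prop:affResults}. You spell out a few details the paper leaves implicit---notably the ``every $k$th term'' argument when $\b=k\delta$ with $k>1$, and the explicit tracking of the possible passage through $0$ that gives the third multiplicity value---but the strategy is the same.
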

\begin{proof}
By Equation~(\ref{eqn:dim-=+}) and Equation~(\ref{eqn:wR=Rw}), without loss of generality, we may assume $\a\in \Bar{\Delta}^+$ and $\b\in K$.
Let $J = \supp(\b)$. By \cite[Proposition~16.29]{Ca}, $A_J$ is of affine type. 
Then by Proposition~\ref{prop:equalDim}, it suffices to consider $\g(A_J)$.
Take $A = A_{J}$ in Proposition~\ref{prop:affResults}. Thus any imaginary $\b$ must be a multiple of $\delta$.
By Proposition~\ref{prop:affResults}(i) and (ii), $R_\a(\b)$ are bi-infinite.
Now part (i) follows from Proposition~\ref{prop:Wprops}(iii).
If both $\a$ and $\b$ are imaginary, then both are multiples of $\delta$. Thus part (ii) follows from Proposition~\ref{prop:affResults}(iii), where the third value comes from the possibility that $0\in R_\a(\b)$, which has multiplicity $\dim \h$.
\end{proof}

\subsection{Negative inner product} \label{subsec:negProd}
We now assume that $\b \in \Delta_{\im}$ is isotropic and $\a\in \Bar{\Delta}$ with $(\a, \b)<0$. Note $\a \neq 0$ in this case. 
In contrast to the non-isotropic case, we cannot use Corollary~\ref{cor:MarCorC} or \cite[Corollary~9.12(a)]{K} when $(\b, \b) = 0$ as we did in the last section,
nor can we focus on an affine subalgebra as in the last subsection since we do not have $\supp(\a) \subseteq \supp(\b)$ anymore.
Nonetheless, there exists a Heisenberg subalgebra $H(\b)$ of $\g$, constructed below, that allows us to consider its irreducible module in $\g$.
It turns out that this module is contained in $\mathfrak{S}_\a(\b)$, and is  isomorphic to a certain induced module.
The main result (Theorem~\ref{thm:isoNeg}) is proved by using the Poincar\'e--Birkhoff--Witt theorem applied to this induced module.
\begin{lemma}\label{lem:notBiInf}
For $\a\in \Delta^+$ such that $(\a,\b)<0$, there exists $k>0$ such that
\[
\a-k\b\notin \Delta
\]
and $R_\a(\b)\subseteq \Delta^+$.
In particular, $R_\a(\b)$ is finite in the direction of $-\b$.
\end{lemma}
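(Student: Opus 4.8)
The plan is to show that moving in the $-\b$ direction eventually exits the root system by tracking the inner product $(\a - k\b, \a - k\b)$ and comparing it against what is possible for a genuine root. Since $\b$ is isotropic, $(\b,\b)=0$, so expanding gives
\[
(\a - k\b, \a - k\b) = (\a,\a) - 2k(\a,\b).
\]
Because $(\a,\b)<0$, the term $-2k(\a,\b)$ is strictly positive and grows linearly in $k$. Hence $(\a - k\b, \a-k\b) \to +\infty$ as $k\to\infty$, so for all sufficiently large $k$ the element $\a - k\b$ has positive norm. By Proposition~\ref{prop:(h*)}\ref{prop:(h*)im}, if $\a - k\b$ were a root it would then have to be a real root.

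First I would rule out $\a-k\b$ being a real root for large $k$ by a support argument. Since $\b \in K$ and $(\a,\b)<0$, Proposition~\ref{prop:affineSupp} gives $\supp(\a)\not\subseteq \supp(\b) =: J$, and $A_J$ is of affine type. If $\a - k\b \in \Delta^+$ for arbitrarily large $k$, then $R_\a(\b)$ would be semi-infinite in the direction of $-\b$; but each such root has the same support $\supp(\a)\cup J$, which strictly contains the affine subdiagram $J$. A real root $\gamma$ can be written (up to the Weyl group and sign) with $|\gamma|^2 = (\gamma,\gamma)$ bounded by the maximal squared length of simple roots, so the unbounded-norm elements $\a-k\b$ cannot all be real roots. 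This contradicts the persistence of roots along the string, forcing some $k>0$ with $\a - k\b \notin \Delta$.

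Alternatively, and more cleanly, I would argue directly using Corollary~\ref{cor:MarCorC} in reverse together with the norm computation: since $(\a,\b)<0$, applying Corollary~\ref{cor:MarCorC} to $\a$ and $\N\b$ shows $\a+\N\b \subseteq R_\a(\b)$, so the string is semi-infinite in the direction of $+\b$. It remains to show it is \emph{finite} in the $-\b$ direction. Once some $\a - k_0\b \notin \Delta$, the defining maximality of $S_\a(\b)$ as a set of consecutive integers forces all $\a - k\b \notin \Delta$ for $k \ge k_0$, so $R_\a(\b)$ contains no negative-direction tail and is finite there. Finally, since $\a \in \Delta^+$ and every $\a + i\b$ for $i \in S_\a(\b)$ lies in $\Delta^+$ (as $\b \in Q^+$ and $\a \in Q^+$ with the negative tail truncated), we conclude $R_\a(\b) \subseteq \Delta^+$.

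The main obstacle will be the support/norm argument excluding real roots of unbounded norm: one must verify carefully that along a semi-infinite string the squared norm genuinely diverges (which the isotropy of $\b$ guarantees) and that no real root can have arbitrarily large norm relative to the fixed finite set of simple-root lengths. The cleanest route is to emphasize that $(\a-k\b,\a-k\b) = (\a,\a) - 2k(\a,\b)$ is eventually positive yet the elements share a fixed support, so a purely norm-based contradiction with Proposition~\ref{prop:(h*)}\ref{prop:(h*)im} combined with finiteness of real-root norms in a fixed support suffices to produce the required $k$.
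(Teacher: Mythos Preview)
Your norm argument for producing the required $k$ is correct and genuinely different from the paper's route. You use that $(\a-k\b,\a-k\b)=(\a,\a)-2k(\a,\b)\to+\infty$, that imaginary roots have nonpositive norm, and that real roots, being $W$-conjugate to simple roots, have norm in the finite set $\{(\a_i,\a_i):i\in I\}$; hence $\a-k\b$ is eventually neither. The paper never touches norms. It simply uses the support fact you already quoted from Proposition~\ref{prop:affineSupp}: there is $i\in\supp(\a)\setminus\supp(\b)$, so the coefficient of $\a_i$ in $\a-k\b$ is positive for every $k$, while the coefficients of $\a_j$ for $j\in\supp(\b)$ become negative for large $k$. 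Thus $\a-k\b$ has coefficients of both signs and lies in neither $Q^+$ nor $Q^-$. This is a two-line argument; yours works but is heavier, and the intermediate remarks about the support equalling $\supp(\a)\cup J$ and about the affine subdiagram play no role.

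There is a small gap in your claim that $R_\a(\b)\subseteq\Delta^+$. The phrase ``$\b\in Q^+$ and $\a\in Q^+$ with the negative tail truncated'' does not explain why $\a-k\b$, for those finitely many small $k>0$ still in the string, must be a \emph{positive} root. The fix is exactly the coefficient observation above: since $i\notin\supp(\b)$, the $\a_i$-coefficient of $\a-k\b$ is positive for all $k\in\Z$, so $\a-k\b\notin Q^-$, and any root of this form is positive. The paper's approach yields both conclusions simultaneously, which is what makes it cleaner here. Your ``alternative'' paragraph via Corollary~\ref{cor:MarCorC} establishes semi-infiniteness in the $+\b$ direction, which is not part of this lemma and does not help with the $-\b$ direction.
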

\begin{proof}
Recall that both $\a$ and $\b$ are assumed to be positive. 
By Proposition~\ref{prop:affineSupp}, $\supp(\a)$ contains some index $i\notin \supp(\b)$. 
Thus for some large $k$, the coefficient of $\a_i$ in $\a-k\b$ is positive while the coefficients of $\a_j$ for $j\in \supp(\b)$ are negative, which means $\a-k\b \notin \Delta$.
\end{proof}

In contrast to  the assertion in Lemma~\ref{lem:notBiInf} about the direction of $-\b$, we will show in Theorem~\ref{thm:isoNeg} that  $R_\a(\b)$ is infinite in the direction of $\b$ and the growth along $R_\a(\b)$ has a subexponential lower bound. 
To prove Theorem~\ref{thm:isoNeg}, we will use the following observation.
By Proposition~\ref{prop:propIm} we have $\dim \g_{k\b}\ge 1$, for $k\in \Z$. Then, by Proposition~\ref{prop:(h)}, for each $k\in \Z_{>0}$, we can choose $x_k\in \g_{k\b}$ and $y_k\in \g_{-k\b}$ such that $(x_k,y_k)=1/k$.
Let $c=\Lambda^{-1}(\b)\in \h\subseteq \g$. We define
\[
H(\b) :=\C c \oplus \bigoplus\limits_{i\ge 1} \C x_i \bigoplus\limits_{j\ge 1} \C y_j\subseteq \g.
\]
Observe that for $i, j\in \Z_{>0}$: 
\begin{enumerate}[label=(\roman*)]
    \item We have $[c, x_i] = \la i\b, c \ra x_i= (i\b, \b)x_i = 0$ (similarly for $y_j$), and thus $c$ is central.
    \item For $i\neq j$, since $[\g_{i\b}, \g_{j\b}]=0$ by Proposition~\ref{prop:affResults}(iv), we have $[x_i,x_j]=[y_i,y_j]=0$.
    \item Since $[x_k, y_k] = (x_k, y_k)\Lambda^{-1}(k\b) = c$, we have $[x_i,y_j]=\delta_{ij}c$.
\end{enumerate}
Such Lie algebras are known as \textit{Heisenberg algebras}.
We further define
\[
H(\b)^- := \C c \oplus \bigoplus\limits_{i\ge 1} \C y_i\subseteq H(\beta).
\]
Let $\kappa = (\a, \b)<0$ and $\C_\kappa$ be the one-dimensional $H^-(\b)$-module where $c\cdot1 = \kappa$ and $y_i\cdot1=0$.

\begin{theorem*}[\ref{thm:isoNeg}]
Let~$\Delta$ be the set of roots of a symmetrizable Kac--Moody algebra $\g$. 
Let $\b \in \Delta_{\im}$ be isotropic, and $\a\in \Bar{\Delta}$ be such that $|R_\a(\b)|>1$. Suppose $(\a, \b) \neq 0$. 
Then $R_\a(\b)$ is semi-infinite, and the growth along $R_\a(\b)$ has a subexponential lower bound.
\end{theorem*}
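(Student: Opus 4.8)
The plan is to realize the $\b$-root space $\mathfrak{S}_\a(\b)=\bigoplus_{\gamma\in R_\a(\b)}\g_\gamma$ as a module over the Heisenberg algebra $H(\b)$ and to extract growth information via the Poincar\'e--Birkhoff--Witt (PBW) theorem. First I would verify that $\mathfrak{S}_\a(\b)$ is an $H(\b)$-module: since $[\g_{k\b},\g_{\a+i\b}]\subseteq\g_{\a+(i+k)\b}$, the operators $x_k$ and $y_k$ move us along the root string, and Lemma~\ref{lem:notBiInf} guarantees a lowest weight, i.e.\ there is a smallest index $i_0$ with $\a+i_0\b\in\Delta$ and $\a+(i_0-1)\b\notin\Delta$. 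After shifting by $i_0\b$ (replacing $\a$ by $\a+i_0\b$, which does not change $(\a,\b)=\kappa<0$), I may assume $\a-\b\notin\Delta$, so every $y_i$ annihilates $\g_\a$. Thus $\g_\a$ is a module over $H(\b)^-$ on which each $y_i$ acts by $0$ and, crucially, $c=\Lambda^{-1}(\b)$ acts by the scalar $\la\a,c\ra=(\a,\b)=\kappa\neq0$ by Proposition~\ref{prop:(h)}(ii) together with the bracket relation $[x,c]$-type computation already used in defining $H(\b)$.

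The key step is to show that the induced module $\ind_{H(\b)^-}^{H(\b)}\C_\kappa\cong \mathfrak{U}(H(\b))\otimes_{\mathfrak{U}(H(\b)^-)}\C_\kappa$ injects into $\mathfrak{S}_\a(\b)$. Fixing a nonzero $v\in\g_\a$, I would consider the map sending a PBW monomial $x_{i_1}\cdots x_{i_r}$ (with $i_1\le\cdots\le i_r$) to $x_{i_1}\cdots x_{i_r}\cdot v\in\g_{\a+(i_1+\cdots+i_r)\b}$. By PBW these monomials form a basis of $\mathfrak{U}(H(\b))\otimes_{\mathfrak{U}(H(\b)^-)}\C_\kappa$, since $c$ acts by $\kappa$ and the $y_i$ act trivially. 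The injectivity follows from irreducibility of this induced module: because $\kappa\neq0$, the canonical commutation relations $[x_i,y_j]=\delta_{ij}c$ force the module to be a faithful irreducible (Fock-type) representation, so the cyclic submodule $H(\b)\cdot v\subseteq\mathfrak{S}_\a(\b)$ generated by $v$ is a nonzero quotient of an irreducible, hence isomorphic to it. In particular $\a+n\b\in\Delta$ for all $n\ge 0$, which shows $R_\a(\b)$ is semi-infinite in the direction of $\b$ (and finite in the direction of $-\b$ by Lemma~\ref{lem:notBiInf}).

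For the growth statement I would count, within the copy of the induced module, the dimension contributed to $\g_{\a+n\b}$. The monomials $x_{i_1}\cdots x_{i_r}\cdot v$ landing in $\g_{\a+n\b}$ are indexed by unordered multisets $\{i_1,\dots,i_r\}$ of positive integers summing to $n$, i.e.\ by partitions of $n$. Their images are linearly independent by the injectivity established above, so
\[
\dim\g_{\a+n\b}\ \ge\ p(n),
\]
where $p(n)$ is the integer partition function. The Hardy--Ramanujan asymptotic $p(n)\sim\frac{1}{4n\sqrt3}\,e^{\pi\sqrt{2n/3}}$ \cite{HRpartFun} gives $\mathcal{G}(p(n))=\mathcal{E}_{1/2}$, which by the remarks in Section~\ref{subsec:RS&G} is subexponential; hence $\mathcal{G}(\dim\g_{\a+n\b})$ has a subexponential lower bound, and in particular grows faster than every polynomial.

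The main obstacle I anticipate is establishing the injectivity of the PBW monomials into $\mathfrak{S}_\a(\b)$ rigorously, rather than merely exhibiting a surjection from the induced module. The cleanest route is to invoke irreducibility of the Fock representation of the Heisenberg algebra at nonzero central charge $\kappa$: one must check that $\g_\a$ contains a nonzero highest-weight-type vector (which the reduction $\a-\b\notin\Delta$ provides) and that the cyclic module it generates is nonzero and cyclic, so that the universal irreducible maps onto it; faithfulness of that irreducible then yields injectivity. A subtlety to watch is that the $x_i,y_i$ were chosen only up to the normalization $(x_k,y_k)=1/k$, and one should confirm these choices yield genuinely commuting $x_i$ (Proposition~\ref{prop:affResults}(iv) applied to $\supp(\b)$, which is affine) and the correct central action, so that $H(\b)$ is honestly Heisenberg and the representation-theoretic input applies verbatim.
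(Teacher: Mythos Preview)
Your proposal is correct and follows essentially the same approach as the paper: reduce so that $\a$ is the endpoint in the $-\b$ direction (via Lemma~\ref{lem:notBiInf}), observe that a nonzero $v\in\g_\a$ generates an $H(\b)$-submodule which, by irreducibility of the Fock module at nonzero central charge (the paper cites \cite[Proposition~1.7.2]{FLM}), is isomorphic to $\ind_{H(\b)^-}^{H(\b)}\C_\kappa$, and then count PBW monomials by partitions to obtain $\dim\g_{\a+n\b}\ge p(n)$ with Hardy--Ramanujan growth $\mathcal{E}_{1/2}$. The only minor wording issue is that ``$\a-\b\notin\Delta$'' alone does not suffice to conclude every $y_i$ kills $\g_\a$; you need $\a-i\b\notin\Delta$ for all $i\ge1$, but this is exactly what your choice of $i_0$ as the global minimum (guaranteed by the support argument in Lemma~\ref{lem:notBiInf}) gives you.
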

\begin{proof}
Suppose $(\a, \b)\neq 0$. First, if $\a \in \Delta^-$, by Equation~(\ref{eqn:dim-=+}) and Equation~(\ref{eqn:R=-R}), we may replace $\a$ by $-\a \in \Delta^+$. Thus it suffices to consider the case $\a \in \Delta^+$.
For $\b$, by Equation~(\ref{eqn:wR=Rw}), we may assume that $\b \in \Delta^+_{\im}$. By Proposition~\ref{prop:Wprops}(ii), Proposition~\ref{prop:K_A}, and Proposition~\ref{prop:(h*)}(i), we may further assume $\b\in K$.
Therefore, we must have $(\a, \b) < 0$ by $(\a, \b)\neq 0$ and the definition of $K$.

By Lemma~\ref{lem:notBiInf}, replacing $\a$ by $\a-N\b$ for some $N>0$ if needed, we may assume that $\a-i\b\notin \Delta$ for all $i>0$. Let $v\in \g_\a$. Then $[y_i,v]=0$ for any $y_i$, and $[c,v]=\kappa v$ where $\kappa=(\a,\b)<0$. Thus, under the adjoint action $\C v \cong \C_\kappa$.
We further let $\g_v$ be the $H(\b)$-module generated by $v$, that is, 
\[
\g_v :=\mathfrak{U}(H(\b))\cdot v\subseteq \g.
\]
\comm{
By construction, the non-trivial $H(\b)$-module $\g_v$ is isomorphic to a quotient of
\[
\ind_{H(\b)^-}^{H(\b)} \C_\kappa
\]
using the universal property of induced modules.
But this module is irreducible by \cite[Proposition~1.7.2]{FLM} (note $\kappa \neq 0$, which is required). 
Therefore,
\[
\g_v\cong \ind_{H(\b)^-}^{H(\b)} \C_\kappa  \cong \mathfrak{U}(H(\b))\otimes_{H(\b)^-} \C v.
\]
}

Consider the induced module
\[
I_\kappa := \ind_{H(\b)^-}^{H(\b)} \C_\kappa \cong \mathfrak{U}(H(\b))\otimes_{H(\b)^-} \C v.
\]
Since $\C_\kappa \cong \C v$, we have an $H(\b)^-$-map from $\C_\kappa$ to $\g_v$. Then by the universal property of $I_\kappa$, there exists a unique $H(\b)$-map from $I_\kappa$ to $\g_v$. Thus $\g_v$ is a quotient of $I_\kappa$. By \cite[Proposition~1.7.2]{FLM}, $I_\kappa$ is irreducible and therefore we have
\[
\g_v \cong \mathfrak{U}(H(\b))\otimes_{H(\b)^-} \C v.
\]
We remark that our $x_i$ and $y_j$ correspond to $y_j$ and $x_i$ in \cite[Section~1.7]{FLM}. Since the construction of Heisenberg algebras is symmetric with respect to these generators, the result holds. 

By the Poincar\'e--Birkhoff--Witt theorem, we have the following basis for $\g_v$
\[
\{x_{i_1}\cdots x_{i_s}\cdot v \mid  s\ge 0, i_1\le \dots \le i_s\},
\]
where the product is understood as the corresponding iterated Lie bracket. 
Additionally, as a subspace of $\mathfrak{S}_\a(\b)$, $\g_v$ has an $\N$-grading. Explicitly, $\g_v = \bigoplus_{k\in \N}\g_v^k$, where each $\g_v^k$ is spanned by 
\begin{equation} \label{eqn:kPart}
    \left \{x_{i_1}\cdots x_{i_s}\cdot v\mid i_1\le \dots \le i_s, \sum i_j = k\right \}.
\end{equation}
Note that $x_k\in \g_{k\b}$ acts as a raising operator on $\g_v$, adding $k\b$ to the weight spaces of $\g_v$.
In particular, $\g_v^k \subseteq \g_{\a+k\b}$, meaning $\g_{\a+k\b}$ is non-zero, and $\a+k\b$ is a root.
This shows $R_\a(\b)$ is infinite in the direction of $\b$.
Furthermore, by Equation~(\ref{eqn:kPart}), each subspace $\g_v^k$ has dimension $p(k)$, where $p(k)$ counts the number of partitions of $k$. Hence
\[
\dim \g_{\a+k\b}\ge \dim \g_v^k =  p(k).
\]
The famous asymptotic formula
\[
p(n)\sim {\frac {1}{4n{\sqrt {3}}}}\exp \left({\pi {\sqrt {\frac {2n}{3}}}}\right)
\]
by Hardy and Ramanujan \cite{HRpartFun} is of growth $\mathcal{E}_{1/2}$ (see Section~\ref{subsec:RS&G}).
This is clearly superpolynomial but less than the exponential growth $\mathcal{E}_1$. 
Thus, the growth along $R_\a(\b)$ has a lower bound that is subexponential.
\end{proof}

\section{A general linear bound for root multiplicities} \label{sec:linBound}

The previous two sections investigated the asymptotic behavior of root strings in the direction of an imaginary root. 
In this section, we prove the following general linear bound on root multiplicities locally. 
Let $\g  = \g(A)$ for a symmetrizable generalized Cartan matrix $A$ as before.
The main result Theorem~\ref{thm:dimGrowth} is a direct application of Theorem~\ref{thm:MarThmA}, combined with Lemma~\ref{lem:linBound} purely in the nature of linear algebra.

\begin{theorem*}[\ref{thm:dimGrowth}] 
Let $\g = \g(A)$ for a symmetrizable generalized Cartan matrix $A$.
If $\a$ and $\b$ are distinct roots of $\g$ such that $(\a,\b)<0$, then 
$$
\dim \g_{\a+\b}\ge \dim \g_\a+ \dim \g_\b -1.
$$
\end{theorem*}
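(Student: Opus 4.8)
The plan is to deduce Theorem~\ref{thm:dimGrowth} from a single linear-algebra fact about the bracket, using Theorem~\ref{thm:MarThmA} as the only input from Lie theory. Consider the bilinear bracket map
\[
B:\g_\a\times\g_\b\longrightarrow\g_{\a+\b},\qquad B(x,y)=[x,y].
\]
Since $\a\neq\b$ and $(\a,\b)<0$, Theorem~\ref{thm:MarThmA} applies to the pair $\a,\b$ and gives $[x,y]\neq0$ whenever $x\neq0$ and $y\neq0$; that is, $B$ has no nonzero ``zero divisors.'' (If $\a+\b=0$ the target is $\h$ rather than a root space, but this plays no role below, since the argument uses only that $B$ takes values in some vector space.) The theorem therefore reduces to the following purely linear statement: if $B:U\times V\to W$ is a bilinear map over $\C$ with $B(u,v)=0$ only when $u=0$ or $v=0$, then $\dim W\ge\dim U+\dim V-1$. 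Applying this with $U=\g_\a$, $V=\g_\b$, $W=\g_{\a+\b}$ yields exactly the claimed bound.

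To prove the linear statement I would linearize $B$. Write $m=\dim U$, $n=\dim V$, and let $\ell:U\otimes V\to W$ be the linear map with $B=\ell\circ\sigma$, where $\sigma(u,v)=u\otimes v$. The no-zero-divisor hypothesis says precisely that $\ker\ell$ contains no nonzero decomposable tensor $u\otimes v$. Passing to projective space, this means $\mathbb{P}(\ker\ell)$ is disjoint from the Segre variety $S=\mathbb{P}(U)\times\mathbb{P}(V)\hookrightarrow\mathbb{P}(U\otimes V)=\mathbb{P}^{mn-1}$, which has dimension $\dim S=m+n-2$. By the projective dimension theorem, two closed subvarieties of $\mathbb{P}^N$ whose dimensions sum to at least $N$ must meet; taking $X=\mathbb{P}(\ker\ell)$ and $Y=S$ in $N=mn-1$, the emptiness of $X\cap Y$ forces
\[
(\dim\ker\ell-1)+(m+n-2)\le mn-2,
\]
so $\dim\ker\ell\le mn-m-n+1$. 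Consequently $\dim W\ge\dim(\im\ell)=mn-\dim\ker\ell\ge m+n-1$, as required.

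I expect the only real obstacle to be the linear statement itself, and in particular its essential use of the algebraic closedness of $\C$. The result genuinely fails over $\R$: complex multiplication $\C\times\C\to\C$, viewed as a real bilinear map $\R^2\times\R^2\to\R^2$, has no zero divisors yet $\dim W=2<3=\dim U+\dim V-1$. Algebraic closedness enters exactly through the projective dimension theorem (equivalently, through the fact that every endomorphism of a complex vector space has an eigenvalue, which already rules out, for instance, two ``columns'' $B(U,v_1)=B(U,v_2)$ spanning the same $m$-plane). An alternative and perhaps more elementary packaging of the same content is to pass to the orthogonal complement $\mathcal N\subseteq\mathrm{Mat}_{m\times n}(\C)$ of the span of $B$ under the trace form: the hypothesis translates into $\mathcal N$ containing no nonzero matrix of rank $\le1$, and the desired inequality becomes the bound $\dim\mathcal N\le(m-1)(n-1)$ for rank-one-free subspaces of matrices, provable by the same dimension count applied to the affine cone over $S$. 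Either route isolates the crux in this single statement about matrices or bilinear maps, after which the Lie-theoretic conclusion is immediate.
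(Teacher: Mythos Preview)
Your proposal is correct and follows essentially the same route as the paper: reduce via Theorem~\ref{thm:MarThmA} to the statement that a bilinear map $U\otimes V\to W$ over $\C$ with no nonzero zero-divisors has image of dimension at least $\dim U+\dim V-1$, equivalently that a subspace of $U\otimes V$ avoiding nonzero decomposable tensors has dimension at most $(m-1)(n-1)$. The only difference is cosmetic: the paper proves this last bound by counting the $(m-1)(n-1)$ quadratic equations cutting out rank~$\le1$ matrices and invoking B\'ezout, whereas you phrase it as the projective dimension theorem applied to $\mathbb P(\ker\ell)$ and the Segre variety---the same algebraic-geometry content packaged slightly differently.
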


To prove Theorem~\ref{thm:dimGrowth}, we begin with the following result.

\begin{lemma}\label{lem:no_pure}
Let $U$ and $V$ be finite dimensional vector spaces over $\C$ of 
dimensions $m,n$ respectively and let $L$ be a 
subspace of $U\otimes V$. Then if $L$ 
contains no simple tensors, we must have
\[
\dim L\le (m-1)(n-1).
\]
\end{lemma}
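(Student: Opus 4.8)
The plan is to translate the statement into projective algebraic geometry and apply the projective dimension theorem, since over $\C$ the obstruction is governed by the Segre variety. After fixing bases, identify $U\otimes V$ with the space of $m\times n$ matrices; under this identification a nonzero simple tensor $u\otimes v$ is precisely a matrix of rank $1$, so the hypothesis that $L$ contains no nonzero simple tensor becomes the statement that every nonzero element of $L$ has rank $\ge 2$. The matrices of rank $\le 1$ form the affine cone over the Segre variety $\Sigma=\mathbb{P}(U)\times\mathbb{P}(V)\hookrightarrow \mathbb{P}(U\otimes V)=\mathbb{P}^{\,mn-1}$, which is an irreducible projective variety of dimension $(m-1)+(n-1)=m+n-2$.

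The argument I would run is by contradiction. Suppose $\dim L\ge (m-1)(n-1)+1$. Then $\mathbb{P}(L)$ is a linear (hence irreducible) subspace of $\mathbb{P}^{\,mn-1}$ of dimension $\dim L-1\ge (m-1)(n-1)=mn-m-n+1$. Adding the dimension of $\Sigma$ gives
\[
\dim\mathbb{P}(L)+\dim\Sigma \;\ge\; (mn-m-n+1)+(m+n-2)\;=\;mn-1,
\]
so the sum of the two dimensions is at least the dimension $mn-1$ of the ambient projective space. By the projective dimension theorem (e.g.\ Hartshorne, \emph{Algebraic Geometry}, Theorem I.7.2, applied to the irreducible varieties $\mathbb{P}(L)$ and $\Sigma$), the intersection $\mathbb{P}(L)\cap\Sigma$ is nonempty. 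A point of this intersection lifts to a nonzero element of $L$ lying on the cone of rank-$\le 1$ matrices, i.e.\ a nonzero simple tensor in $L$, contradicting the hypothesis. Hence $\dim L\le (m-1)(n-1)$.

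The step I expect to be the genuine obstacle, and where algebraic closedness of $\C$ is indispensable, is exactly the nonemptiness of $\mathbb{P}(L)\cap\Sigma$: this is the point that forces the use of the projective dimension theorem rather than pure linear algebra. One can see that a naive inductive proof stalls by precisely one dimension: projecting along a hyperplane $U_0\subseteq U$ and intersecting $L$ with $U_0\otimes V$ (which again contains no simple tensor, since simple tensors in $U_0\otimes V$ are simple in $U\otimes V$) controls the kernel via induction, but the image in $V$ can only be bounded by $n$ rather than the needed $n-1$. That missing dimension is not a defect of the write-up but is intrinsic: the inequality is false over $\R$ (for instance the $2\times2$ matrices $\left(\begin{smallmatrix} a & -b\\ b & a\end{smallmatrix}\right)$ give a $2$-dimensional real space of rank-$\ge 2$ matrices, exceeding $(m-1)(n-1)=1$), so any correct proof must invoke algebraic closedness in an essential, dimension-theoretic way, which the Segre/dimension-theorem formulation supplies cleanly.
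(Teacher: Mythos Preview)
Your proof is correct and shares the paper's core idea: identify $U\otimes V$ with $m\times n$ matrices so that nonzero simple tensors become rank-one matrices, then use an intersection/dimension argument over $\C$ to force $L$ to meet that locus once $\dim L>(m-1)(n-1)$. The paper carries out the last step by equation counting---it writes down the $(m-1)(n-1)$ quadratic relations $A_{ij}A_{11}-A_{i1}A_{1j}=0$ (for $2\le i\le m$, $2\le j\le n$) together with $mn-\dim L$ linear equations for $L$ and appeals to B\'ezout for a nonzero common solution---whereas you invoke the projective dimension theorem directly for the Segre variety $\Sigma\subset\mathbb{P}^{mn-1}$ of dimension $m+n-2$. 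Your route is the cleaner of the two: the paper's chosen $(m-1)(n-1)$ quadratics do not set-theoretically cut out the rank-$\le 1$ locus (any matrix with zero first row and zero first column satisfies them regardless of rank), so its equation count does not by itself place the solution inside $R$; working with $\Sigma$ and its actual dimension, as you do, sidesteps that wrinkle. Your aside on the necessity of algebraic closedness and the real $2\times2$ counterexample is well chosen.
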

\begin{proof}
 We choose bases for $U$ and $V$ and a dual basis for $U^*$. Let $\psi$ denote the isomorphism from $U \otimes V$ to the space of $m\times n$ complex matrices $M_{m, n}$ with respect to these bases. Explicitly we have $\End(U, V) \cong U^*\otimes V \cong U\otimes V$.

Let $R\subseteq M_{m,n}$ be the subset of matrices of rank $\le 1$.
Then $R$ is the image of the set of simple tensors in $U\otimes V$ under $\psi$. 
Let $L'=\psi(L)$. We must show that if $\dim L = \dim(L')>(m-1)(n-1)$ then $L'\cap R \ne 0$.

If $A = (A_{ij}) \in M_{m,n}$ and $A\in R$, all rows must be multiples of each other. This is equivalent to:
\[
A_{i,j}A_{1,1}-A_{i,1}A_{1,j}=0, \quad \text{for $2\le i \le m$, $2\le j \le n$,}
\]
which are $(m-1)(n-1)$ homogeneous quadratic equations that determine membership in $R$.

On the other hand, $L'\subseteq M_{m,n}$ is a subspace determined by $mn-{\rm dim} L'$ homogeneous linear equations.
So $L'\cap R$ consists of the points satisfying the $(m-1)(n-1)$ homogeneous polynomials for membership in $R$ and the $mn-{\rm dim}(L')$ homogeneous polynomials for membership in $L'$. Suppose $\dim L' \geq (m-1)(n-1)+1$. Then in total, there are at most
\[
(m-1)(n-1)+mn-{\rm dim}(L') \leq (m-1)(n-1)+mn-(m-1)(n-1)-1=mn-1
\]
homogeneous polynomial equations in $mn$ variables over $\C$ that determine this membership.
By Bezout's Theorem \cite[p. 246]{ShaAlgGeo}, there must be a non-zero solution.
\end{proof}

\begin{lemma}\label{lem:linBound}
Let $U,V,W$ be vector spaces and $m={\rm dim}(U)$, $n= {\rm dim}(V)$. Let $f:U\otimes V \rightarrow W$ be a bilinear map 
such that for all $u\in U\setminus \{0\}, v\in V\setminus \{0\}$, $f(u,v)\ne 0$.
Then
\[
\dim W\ge m+n-1.
\]
\end{lemma}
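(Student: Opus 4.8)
The plan is to prove Lemma~\ref{lem:linBound} by showing that the image $f(U \otimes V) \subseteq W$ has dimension at least $m+n-1$, which suffices since this image is contained in $W$. The key idea is that the hypothesis, namely that $f(u,v) \neq 0$ whenever $u \neq 0$ and $v \neq 0$, is precisely a statement that $f$ sends simple tensors to nonzero vectors. I would therefore aim to apply Lemma~\ref{lem:no_pure} to the kernel of the induced linear map. Since $f$ is bilinear, it factors through a linear map $\bar{f} : U \otimes V \to W$, and the hypothesis says that no nonzero simple tensor lies in $\ker \bar{f}$.

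First I would set $L := \ker \bar{f} \subseteq U \otimes V$ and observe that $L$ contains no nonzero simple tensor: if $u \otimes v \in L$ with $u, v \neq 0$, then $f(u,v) = \bar{f}(u \otimes v) = 0$, contradicting the hypothesis. Thus $L$ satisfies the condition of Lemma~\ref{lem:no_pure}, giving $\dim L \le (m-1)(n-1)$. Next, by the rank--nullity theorem applied to $\bar{f}$, the image $\bar{f}(U \otimes V)$ has dimension
\[
\dim \bar{f}(U \otimes V) = mn - \dim L \ge mn - (m-1)(n-1) = m + n - 1.
\]
Since $\bar{f}(U \otimes V) \subseteq W$, this yields $\dim W \ge m+n-1$, as required.

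I expect the main obstacle to be largely bookkeeping rather than conceptual: one must be careful that Lemma~\ref{lem:no_pure} is stated for the case where $L$ contains \emph{no} simple tensors, whereas $0$ is trivially a simple tensor ($0 = 0 \otimes 0$); I would address this by interpreting ``no simple tensors'' as ``no nonzero simple tensors'' consistently with how that lemma is applied, noting that its proof via Bezout detects nonzero solutions. A secondary subtlety is that Lemma~\ref{lem:no_pure} is stated over finite-dimensional spaces, so I would note that we may assume $U$, $V$, and the relevant part of $W$ are finite dimensional, which is exactly the setting in which the lemma is invoked for root spaces. Once these points are dispatched, the argument is simply the factorization of a bilinear map through the tensor product followed by rank--nullity.
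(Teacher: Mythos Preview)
Your proposal is correct and follows essentially the same argument as the paper: apply Lemma~\ref{lem:no_pure} to the kernel of the induced linear map to bound its dimension by $(m-1)(n-1)$, then use rank--nullity to conclude $\dim W \ge mn - (m-1)(n-1) = m+n-1$. Your care in distinguishing the bilinear $f$ from the induced linear $\bar f$, and in noting that ``no simple tensors'' means nonzero ones, is appropriate and matches how the paper implicitly proceeds.
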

\begin{proof}
By our assumptions on $f$, $\ker f$ 
cannot have any simple tensors $u\otimes v$, where $u,v\ne 0$. 
Then by Lemma~\ref{lem:no_pure}, 
\[
\dim \ker f\le (m-1)(n-1).
\]
By the rank--nullity theorem, 
\[
\dim \mathrm{Im}(f)= \dim U\otimes V - \dim \ker f \ge mn-(m-1)(n-1) = m+n-1.
\]
Therefore $\dim W \ge \dim \mathrm{Im}(f) \ge m+n-1$.
\end{proof}

\begin{proof}[Proof of Theorem~\ref{thm:dimGrowth}]
By Theorem~\ref{thm:MarThmA}, the Lie bracket $[\cdot, \cdot]$ is a bilinear map satisfying the conditions for Lemma~\ref{lem:linBound}.
This allows us to conclude that if $\a_1,\a_2\in \bar{\Delta}$ are distinct, then $\dim \g_{\a_1+\a_2}\ge \dim \g_{\a_1}+\dim \g_{\a_2}-1$, proving Theorem~\ref{thm:dimGrowth}.
\end{proof}

By Proposition~\ref{prop:semiinfRS}, for any non-isotropic root $\b$, the root string $R_\a(\b)$ is always infinite if $|R_\a(\b)|>1$. 
We cannot have more than one root $\gamma \in R_\a(\b)$ which satisfies $(\gamma,\b)=0$.
Suppose $(\gamma, \b) = 0$, then $(\gamma+s\b,\b)=s(\b,\b)$ cannot be constant for $s\in\Z$ if $|R_\a(\b)|>1$.
So $R_\a(\b)$ will always contain a $\gamma$ such that $(\b,\gamma)\ne 0$.
Now, by inductively applying Theorem~\ref{thm:dimGrowth}, we derive the following result.

\begin{corollary}\label{cor:increase}
Let $\b \in \Delta_{\im}$ be non-isotropic and let $\a \in \bar{\Delta}$ be such that $|R_\a(\b)|>1$. 
Then $R_\a(\b)$ is infinite in at least one direction. Moreover,
\en[label=(\roman*)]
\item If there is some root $\gamma \in R_\a(\b)$ with $(\b,\gamma)<0$, then $R_\a(\b)$ is infinite in the direction of~$\beta$. If 
$\dim \g_\b >1$, then $\dim \g_{\gamma+i\b}$ is strictly increasing in $i$.
\item If there is some root $\gamma \in R_\a(\b)$ with $(\b,\gamma)>0$,  then $R_\a(\b)$ the root string is infinite in the direction of~$-\beta$. If $\dim \g_\b >1$, then $\dim \g_{\gamma-i\b}$ is strictly increasing  in $i$.

\te
\end{corollary}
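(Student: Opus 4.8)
The plan is to run a short induction along the string using Theorem~\ref{thm:dimGrowth} as the engine, keeping the sign of the relevant inner product under control via the hypothesis $(\b,\b)<0$. The opening assertion that $R_\a(\b)$ is infinite in at least one direction is immediate from Proposition~\ref{prop:semiinfRS}, which guarantees that one of $\a\pm\N\b$ lies in $R_\a(\b)$. For the refinements (i) and (ii) I would only track how the dimensions $\dim\g_{\gamma+i\b}$ evolve as $i$ grows, and the crucial observation is that the inner product $(\gamma+i\b,\b)$ never changes sign.

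For part (i), fix $\gamma\in R_\a(\b)$ with $(\gamma,\b)<0$. First note $\gamma\ne0$, since $(0,\b)=0$, so $\gamma$ is a genuine root. The key computation is
\[
(\gamma+i\b,\b)=(\gamma,\b)+i(\b,\b)<0\qquad\text{for every } i\ge0,
\]
because both summands are $\le0$ and the first is strictly negative. I would then induct on $i$: assuming $\gamma+i\b\in\Delta$ and $\gamma+i\b\ne\b$, I apply Theorem~\ref{thm:dimGrowth} to the distinct roots $\gamma+i\b$ and $\b$ to obtain
\[
\dim\g_{\gamma+(i+1)\b}\ge\dim\g_{\gamma+i\b}+\dim\g_\b-1\ge\dim\g_{\gamma+i\b}\ge1.
\]
In particular $\gamma+(i+1)\b$ is again a root, so $\gamma+\N\b\subseteq\Delta$ and $R_\a(\b)$ is infinite in the direction of $\b$. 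If moreover $\dim\g_\b>1$, then $\dim\g_\b-1\ge1$ and the displayed inequality upgrades to $\dim\g_{\gamma+(i+1)\b}\ge\dim\g_{\gamma+i\b}+1$, which is exactly the claimed strict monotonicity.

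The one hypothesis of Theorem~\ref{thm:dimGrowth} that must be rechecked at each step is distinctness, and this is where I expect the only friction. Solving $\gamma+i\b=\b$ gives $\gamma=(1-i)\b$; combined with $(\gamma,\b)<0$ and $(\b,\b)<0$, this forces $1-i>0$, that is $i=0$ and $\gamma=\b$. Hence $\gamma+i\b\ne\b$ automatically for all $i\ge0$ whenever $\gamma$ and $\b$ are non-proportional, and the induction runs without obstruction. The remaining proportional case $\gamma=\b$ concerns only the string of multiples $n\b$, whose multiplicities are already governed by the monotonicity of $\dim\g_{n\b}$ established inside the proof of Corollary~\ref{cor:dim_exp}; I would simply invoke that to cover this degenerate step.

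Finally, part (ii) requires no new work: since $R_\a(\b)=R_\a(-\b)$ by Equation~(\ref{eqn:wR=Rw}), $\dim\g_{-\b}=\dim\g_\b$ by Equation~(\ref{eqn:dim-=+}), and $(\gamma,\b)>0$ is equivalent to $(\gamma,-\b)<0$, it is precisely part (i) applied with $-\b$ in place of $\b$. The whole argument is brief once Theorem~\ref{thm:dimGrowth} is available; the only real point to secure is that its two hypotheses—distinctness of the roots and a strictly negative inner product—persist all along the semi-infinite string, which the sign computation above guarantees.
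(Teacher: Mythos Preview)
Your proposal is correct and follows the same route as the paper: invoke Proposition~\ref{prop:semiinfRS} for the infiniteness and then run an induction along the string using Theorem~\ref{thm:dimGrowth}, with part~(ii) reduced to part~(i) by replacing $\b$ with $-\b$. The paper's own argument is in fact terser---it simply says ``by inductively applying Theorem~\ref{thm:dimGrowth}''---so your explicit verification that $(\gamma+i\b,\b)<0$ persists and that $\gamma+i\b\ne\b$ at each step is more detail than the paper supplies. One small caveat: in the degenerate case $\gamma=\b$ your appeal to the proof of Corollary~\ref{cor:dim_exp} only yields that $\dim\g_{n\b}$ is \emph{non-decreasing}, not strictly increasing, so the single step from $i=0$ to $i=1$ (i.e.\ $\dim\g_{2\b}>\dim\g_\b$) is not quite covered; the paper does not address this corner case either.
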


Corollary~\ref{cor:increase} can also be used to determine if the dimensions along bi-infinite root strings are increasing.
 That is, if $(\beta, \gamma)<0$ and $\gamma - \beta$, $\gamma - 2\beta,\dots, \gamma - \left\lceil{\frac{(\beta,\gamma)}{(\beta,\beta)}}\right\rceil\beta$ are roots, then $(\beta, \gamma - \left\lceil{\frac{(\beta,\gamma)}{(\beta,\beta)}}\right\rceil\beta)> 0$ and by Corollary~\ref{cor:increase}(ii), $R_\a(\b)$ is also infinite in the direction of $-\beta$ so $R_\a(\b)$ is bi-infinite. Additionally, $\dim \g_{\a+i\b}$ is decreasing for $i \leq - \left\lceil{\frac{(\beta,\gamma)}{(\beta,\beta)}}\right\rceil$, and increasing for $i \geq 1$.
These statements are similarly true if $(\beta,\gamma)>0$ and $\gamma + \beta, \gamma + 2\beta, \dots, \gamma + \left\lceil{-\frac{(\beta,\gamma)}{(\beta,\beta)}}\right\rceil\beta$ are roots.

As another application of Theorem~\ref{thm:MarThmA}, the existence of semi-infinite root strings in the direction of an imaginary root for any symmetrizable Kac--Moody algebra is always guaranteed.
\begin{corollary} \label{cor:ind}
Let $\g$ be a symmetrizable Kac--Moody algebra with root system $\Delta$ and symmetric invariant bilinear form $(\cdot,\cdot)$. Let $\a,\b\in \Delta$ be distinct with $(\a,\b)<0$. 
Then $\a+\b$ is a root. If in addition, $\b$ is imaginary, then $\a+\N \b \subseteq R_{\a}(\b)$ with $\dim \g_{\a+m\b}-\dim \g_{\a+n\b}
\ge (m-n)(\dim \g_{\b}-1)$ for any $m>n$.
\end{corollary}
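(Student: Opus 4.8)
The plan is to deduce all three assertions from the two Marquis-type inputs already in hand---Theorem~\ref{thm:MarThmA} and its quantitative refinement Theorem~\ref{thm:dimGrowth}---together with the single observation that $\b$ being imaginary, i.e. $(\b,\b)\le 0$, forces the pairing $(\a+n\b,\b)$ to stay strictly negative for every $n\ge 0$. The first assertion is immediate: since $\a,\b$ are distinct roots with $(\a,\b)<0$, Theorem~\ref{thm:MarThmA} says $[x,y]\ne 0$ for nonzero $x\in\g_\a$, $y\in\g_\b$, and as $[x,y]\in\g_{\a+\b}$ this gives $\g_{\a+\b}\ne 0$, so $\a+\b$ is a root (equivalently, invoke Corollary~\ref{cor:MarCorC}).

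For the second assertion I would argue by induction on $n$ that $\a+n\b\in\bar\Delta$. The crucial computation is $(\a+n\b,\b)=(\a,\b)+n(\b,\b)\le(\a,\b)<0$ for all $n\ge 0$, valid precisely because $(\b,\b)\le 0$. Assuming $\a+n\b$ is a root, it is distinct from $\b$ (see below) and has negative pairing with $\b$, so Theorem~\ref{thm:MarThmA}/Corollary~\ref{cor:MarCorC} produces the root $\a+(n+1)\b$. Hence $\a+\N\b\subseteq\bar\Delta$, so $\N\subseteq S_\a(\b)$ and $\a+\N\b\subseteq R_\a(\b)$. The distinctness required at each step is a one-line inner-product check: if $\a+n\b=\b$ for some $n\ge 1$, pairing both sides with $\b$ gives $(\a,\b)=(1-n)(\b,\b)$, whose right-hand side is $\ge 0$ because $1-n\le 0$ and $(\b,\b)\le 0$, contradicting $(\a,\b)<0$; the case $n=0$ is just the hypothesis $\a\ne\b$.

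For the dimension inequality I would apply Theorem~\ref{thm:dimGrowth} to the distinct roots $\a+k\b$ and $\b$, which is legitimate since $(\a+k\b,\b)<0$ and $\a+k\b\ne\b$ for every $k\ge 0$ by the above. This yields the one-step estimate $\dim\g_{\a+(k+1)\b}\ge\dim\g_{\a+k\b}+\dim\g_\b-1$, i.e. each increment is at least $\dim\g_\b-1$. Telescoping this over $k=n,n+1,\dots,m-1$ gives
\[
\dim\g_{\a+m\b}-\dim\g_{\a+n\b}=\sum_{k=n}^{m-1}\bigl(\dim\g_{\a+(k+1)\b}-\dim\g_{\a+k\b}\bigr)\ge(m-n)(\dim\g_\b-1),
\]
which is exactly the stated bound.

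There is no deep obstacle here; the proof is essentially a packaging of the earlier theorems. The only points requiring care are, first, confirming that the pairing $(\a+n\b,\b)$ never turns nonnegative---this rests entirely on $(\b,\b)\le 0$ and would genuinely fail for a real root $\b$, where the string terminates once the pairing becomes positive---and, second, the distinctness bookkeeping that lets the cited theorems apply at every index. I would flag the proportional subcase, where $\a$ is a scalar multiple of $\b$, as the natural place to sanity-check, though the inner-product distinctness argument above already subsumes it.
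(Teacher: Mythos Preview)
Your proposal is correct and follows essentially the same approach as the paper's proof: both use Theorem~\ref{thm:MarThmA} for the first assertion, the observation $(\a+k\b,\b)\le(\a,\b)<0$ combined with induction for the second, and Theorem~\ref{thm:dimGrowth} telescoped for the dimension bound. If anything, your treatment is more careful than the paper's, which does not explicitly verify the distinctness $\a+k\b\ne\b$ needed to invoke those theorems at each step.
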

\begin{proof} By Theorem~\ref{thm:MarThmA}, $[x_1,x_2]\ne 0$ so $\g_{\a+\b}\neq \{0\}$ and thus $\a+\b$ is a root. 
Suppose $(\b,\b)\le 0$. Then as $(\a+k\b, \b) \le (\a, \b) < 0$, inductively we can show that $\a+k\b$ is a root for any $k > 0$.
By Theorem~\ref{thm:dimGrowth}, again as $(\a+k\b, \b) \le (\a, \b) < 0$,
we have $\dim \g_{\a+(k+1)\b}-\dim\g_{\a+k\b}\ge \dim \g_{\b}-1$ for all $k>1$. 
The assertion then follows inductively. 
\end{proof}


\comm{
Finally for the root multiplicities of $R_\a(\b)$, we give the proof to Theorem~\ref{thm:strictGrowth} as a direct application of Theorem~\ref{thm:dimGrowth}.

\begin{theorem*}[\ref{thm:strictGrowth}]
    Let $\a \in \Delta^+$ and $\b \in K$ be such that $R_\a(\b)$ is infinite in the direction of $\b$. Then $\dim \g_{\a+n\b}$ is a strictly increasing function for large enough $n$ if and only if $(\a+\b, \b)<0$.
\end{theorem*}
\begin{proof}
    First, we suppose $\b$ is non-isotropic. 
    If $(\a+\b, \b) < 0$, then $R_{\a+\b}(\b) = R_\a(\b)$ is infinite in the direction of $\b$ and possesses exponential growth by Corollary~\ref{cor:gen_exp}.
    By Theorem~\ref{thm:dimGrowth}, $\dim \g_{\a+n\b}$ always strictly increases after $\dim \g_\gamma \ge 2$ for some $\gamma \in R_\a(\b)$ which will eventually take place by the exponential growth.
    
    If conversely, we already have exponential growth along $R_\a(\b)$, then $R_\a(\b)$ has to be infinite in the direction of $\b$. But $(\a, \b)$ is at most 0 as $\b \in K$. So $(\a+\b, \b) < 0$ (c.f. the proof of Corollary~\ref{cor:gen_exp}). This concludes the proof for non-isotropic $\b$.
    
    Suppose $\b$ is instead isotropic. Then $(\a+\b, \b) = (\a, \b)$. Note $\b \in K$, so $(\a, \b)\le 0$.
    The discussion in Proposition~\ref{prop:affineSupp} tells us that there are exactly two cases. If $(\a, \b) < 0$ then Theorem~\ref{thm:isoNeg} implies subexponential growth along $R_\a(\b)$. Then similarly by Theorem~\ref{thm:dimGrowth}, we conclude that $\dim \g_{\a+n\b}$ strictly grows eventually.
    
    Conversely, by Theorem~\ref{thm:isoZero}, if $(\a, \b)=0$ the root multiplicities are either constantly 1, or take at most 3 values.
    Thus if $\dim \g_{\a+n\b}$ strictly grows for large $n$, then $(\a, \b)$ cannot be 0. Therefore $(\a, \b) < 0$.
\end{proof}
}

\bibliographystyle{amsalpha}
\bibliography{ref}

@book {Ca,
    AUTHOR = {Carter, R. W.},
     TITLE = {Lie algebras of finite and affine type},
    SERIES = {Cambridge Studies in Advanced Mathematics},
    VOLUME = {96},
 PUBLISHER = {Cambridge University Press, Cambridge},
      YEAR = {2005},
     PAGES = {xviii+632},
      ISBN = {978-0-521-85138-1; 0-521-85138-6},
   MRCLASS = {17-02 (17B67)},
  MRNUMBER = {2188930},
MRREVIEWER = {Stephen Slebarski},
       DOI = {10.1017/CBO9780511614910},
       URL = {https://doi-org.proxy.libraries.rutgers.edu/10.1017/CBO9780511614910},
}

@article {F,
    AUTHOR = {Feingold, Alex Jay},
     TITLE = {A hyperbolic {GCM} {L}ie algebra and the {F}ibonacci numbers},
   JOURNAL = {Proc. Amer. Math. Soc.},
  FJOURNAL = {Proceedings of the American Mathematical Society},
    VOLUME = {80},
      YEAR = {1980},
    NUMBER = {3},
     PAGES = {379--385},
      ISSN = {0002-9939},
   MRCLASS = {17B65 (10A20 10A45)},
  MRNUMBER = {580988},
MRREVIEWER = {F. van der Blij},
       DOI = {10.2307/2043723},
       URL = {https://doi-org.proxy.libraries.rutgers.edu/10.2307/2043723},
}

@book {Hu,
    AUTHOR = {Humphreys, James E.},
     TITLE = {Introduction to {L}ie algebras and representation theory},
    SERIES = {Graduate Texts in Mathematics},
    VOLUME = {9},
      NOTE = {Second printing, revised},
 PUBLISHER = {Springer-Verlag, New York-Berlin},
      YEAR = {1978},
     PAGES = {xii+171},
      ISBN = {0-387-90053-5},
   MRCLASS = {17Bxx},
  MRNUMBER = {499562},
MRREVIEWER = {I. P. Shestakov},
}

@book {K,
    AUTHOR = {Kac, Victor G.},
     TITLE = {Infinite-dimensional {L}ie algebras},
   EDITION = {Third},
 PUBLISHER = {Cambridge University Press, Cambridge},
      YEAR = {1990},
     PAGES = {xxii+400},
      ISBN = {0-521-37215-1; 0-521-46693-8},
   MRCLASS = {17B65 (17B67 17B68 58F07)},
  MRNUMBER = {1104219},
       DOI = {10.1017/CBO9780511626234},
       URL = {https://doi-org.proxy.libraries.rutgers.edu/10.1017/CBO9780511626234},
}

@article {KacGrowth,
    AUTHOR = {Kac, Victor G.},
     TITLE = {Simple irreducible graded {L}ie algebras of finite growth},
   JOURNAL = {Izv. Akad. Nauk SSSR Ser. Mat.},
  FJOURNAL = {Izvestiya Akademii Nauk SSSR. Seriya Matematicheskaya},
    VOLUME = {32},
      YEAR = {1968},
     PAGES = {1323--1367},
      ISSN = {0373-2436},
   MRCLASS = {57.70 (17.00)},
  MRNUMBER = {259961},
MRREVIEWER = {S.\ \'{S}wierczkowski},
}

@article {TM,
    AUTHOR = {Marquis, Timoth\'{e}e},
     TITLE = {On the structure of {K}ac--{M}oody algebras},
   JOURNAL = {Canad. J. Math.},
  FJOURNAL = {Canadian Journal of Mathematics. Journal Canadien de
              Math\'{e}matiques},
    VOLUME = {73},
      YEAR = {2021},
    NUMBER = {4},
     PAGES = {1124--1152},
      ISSN = {0008-414X},
   MRCLASS = {17B67 (17B30)},
  MRNUMBER = {4303498},
MRREVIEWER = {Andrea Appel},
       DOI = {10.4153/S0008414X20000358},
       URL = {https://doi-org.proxy.libraries.rutgers.edu/10.4153/S0008414X20000358},
}

@article {M,
    AUTHOR = {Moody, Robert V.},
     TITLE = {Root systems of hyperbolic type},
   JOURNAL = {Adv. in Math.},
  FJOURNAL = {Advances in Mathematics},
    VOLUME = {33},
      YEAR = {1979},
    NUMBER = {2},
     PAGES = {144--160},
      ISSN = {0001-8708},
   MRCLASS = {17B65 (20H15)},
  MRNUMBER = {544847},
MRREVIEWER = {Robert Lee Wilson},
       DOI = {10.1016/S0001-8708(79)80003-1},
       URL = {https://doi-org.proxy.libraries.rutgers.edu/10.1016/S0001-8708(79)80003-1},
}

@article {Mo2,
    AUTHOR = {Morita, Jun},
     TITLE = {Root strings with three or four real roots in {K}ac--{M}oody
              root systems},
   JOURNAL = {Tohoku Math. J. (2)},
  FJOURNAL = {The Tohoku Mathematical Journal. Second Series},
    VOLUME = {40},
      YEAR = {1988},
    NUMBER = {4},
     PAGES = {645--650},
      ISSN = {0040-8735},
   MRCLASS = {17B67 (22E65)},
  MRNUMBER = {972252},
MRREVIEWER = {Guy Rousseau},
       DOI = {10.2748/tmj/1178227928},
       URL = {https://doi-org.proxy.libraries.rutgers.edu/10.2748/tmj/1178227928},
}

@article {Witt,
    AUTHOR = {Witt, Ernst},
     TITLE = {Treue {D}arstellung {L}iescher {R}inge},
   JOURNAL = {J. Reine Angew. Math.},
  FJOURNAL = {Journal f\"{u}r die Reine und Angewandte Mathematik. [Crelle's
              Journal]},
    VOLUME = {177},
      YEAR = {1937},
     PAGES = {152--160},
      ISSN = {0075-4102},
   MRCLASS = {DML},
  MRNUMBER = {1581553},
       DOI = {10.1515/crll.1937.177.152},
       URL = {https://doi-org.proxy.libraries.rutgers.edu/10.1515/crll.1937.177.152},
}

@incollection {ReutenauerFLA,
    AUTHOR = {Reutenauer, Christophe},
     TITLE = {Free {L}ie algebras},
 BOOKTITLE = {Handbook of algebra, {V}ol. 3},
    SERIES = {Handb. Algebr.},
    VOLUME = {3},
     PAGES = {887--903},
 PUBLISHER = {Elsevier/North-Holland, Amsterdam},
      YEAR = {2003},
   MRCLASS = {17B01},
  MRNUMBER = {2035110},
MRREVIEWER = {Hartmut Laue},
       DOI = {10.1016/S1570-7954(03)80075-X},
       URL = {https://doi-org.proxy.libraries.rutgers.edu/10.1016/S1570-7954(03)80075-X},
}

@misc {CasselmanFLA,
    AUTHOR = {Casselman, William},
     TITLE = {Free {L}ie algebras},
      YEAR = {2020},
      howpublished = "\url{https://personal.math.ubc.ca/~cass/research/pdf/Free.pdf}",
}

@book {FLM,
    AUTHOR = {Frenkel, Igor and Lepowsky, James and Meurman, Arne},
     TITLE = {Vertex operator algebras and the {M}onster},
    SERIES = {Pure and Applied Mathematics},
    VOLUME = {134},
 PUBLISHER = {Academic Press, Inc., Boston, MA},
      YEAR = {1988},
     PAGES = {liv+508},
      ISBN = {0-12-267065-5},
   MRCLASS = {17B65 (17B67 20D08 81D15 81E40)},
  MRNUMBER = {996026},
MRREVIEWER = {Kailash\ C.\ Misra},
}

@article {HRpartFun,
    AUTHOR = {Hardy, G. H. and Ramanujan, S.},
     TITLE = {Asymptotic {F}ormulaæ in {C}ombinatory {A}nalysis},
   JOURNAL = {Proc. London Math. Soc. (2)},
  FJOURNAL = {Proceedings of the London Mathematical Society. Second Series},
    VOLUME = {17},
      YEAR = {1918},
     PAGES = {75--115},
      ISSN = {0024-6115},
   MRCLASS = {99-04},
  MRNUMBER = {1575586},
       DOI = {10.1112/plms/s2-17.1.75},
       URL = {https://doi.org/10.1112/plms/s2-17.1.75},
}

@article {Meu,
    AUTHOR = {Meurman, Arne},
     TITLE = {Characters of rank two hyperbolic {L}ie algebras as functions
              at quasiregular cusps},
   JOURNAL = {J. Algebra},
  FJOURNAL = {Journal of Algebra},
    VOLUME = {76},
      YEAR = {1982},
    NUMBER = {2},
     PAGES = {494--504},
      ISSN = {0021-8693},
   MRCLASS = {17B65 (10D05)},
  MRNUMBER = {661868},
MRREVIEWER = {Wim\ H.\ Hesselink},
       DOI = {10.1016/0021-8693(82)90227-7},
       URL = {https://doi.org/10.1016/0021-8693(82)90227-7},
}

@article {GP,
    AUTHOR = {Grigorchuk, Rostislav and Pak, Igor},
     TITLE = {Groups of intermediate growth: an introduction},
   JOURNAL = {Enseign. Math. (2)},
  FJOURNAL = {L'Enseignement Math\'{e}matique. Revue Internationale. 2e S\'{e}rie},
    VOLUME = {54},
      YEAR = {2008},
    NUMBER = {3-4},
     PAGES = {251--272},
      ISSN = {0013-8584},
   MRCLASS = {20F69 (20F10 20F50)},
  MRNUMBER = {2478087},
MRREVIEWER = {Goulnara N. Arzhantseva},
}

@book {KL,
    AUTHOR = {Krause, G\"{u}nter R. and Lenagan, Thomas H.},
     TITLE = {Growth of algebras and {G}elfand-{K}irillov dimension},
    SERIES = {Graduate Studies in Mathematics},
    VOLUME = {22},
   EDITION = {Revised},
 PUBLISHER = {American Mathematical Society, Providence, RI},
      YEAR = {2000},
     PAGES = {x+212},
      ISBN = {0-8218-0859-1},
   MRCLASS = {16P90},
  MRNUMBER = {1721834},
MRREVIEWER = {Martha\ K.\ Smith},
       DOI = {10.1090/gsm/022},
       URL = {https://doi.org/10.1090/gsm/022},
}

@book {ShaAlgGeo,
    AUTHOR = {Shafarevich, Igor R.},
     TITLE = {Basic algebraic geometry. 1},
   EDITION = {Third},

      NOTE = {Varieties in projective space},
 PUBLISHER = {Springer, Heidelberg},
      YEAR = {2013},
     PAGES = {xviii+310},
      ISBN = {978-3-642-37955-0; 978-3-642-37956-7},
   MRCLASS = {14-01},
  MRNUMBER = {3100243},
}

\end{document}